\newtheorem{theorem}{Theorem}[section]
\newtheorem{thm}[theorem]{Theorem}
\newtheorem{lem}[theorem]{Lemma}
\newtheorem{proposition}[theorem]{Proposition}
\newtheorem{con}[theorem]{Conjecture}
\theoremstyle{definition}
\newtheorem{definition}[theorem]{Definition}
\newtheorem{remark}[theorem]{Remark}
\numberwithin{equation}{section}
\DeclareMathOperator{\supp}{spt}
\newcommand{\reg}{{\rm Reg}}
\DeclareMathOperator{\Div}{div}
\newcommand\lap{{\Delta}}
\newcommand\LL{{\mathcal{L}}}
\newcommand\res{\mathop{\hbox{\vrule height 7pt width .3pt depth 0pt
			\vrule height .3pt width 5pt depth 0pt}}\nolimits}
\newcommand\cH{{\mathcal{H}}}
\newcommand{\bE}{{\mathbf{E}}}
\newcommand{\bC}{{\mathbf{C}}}
\newcommand{\be}{{\mathbf{e}}}
\newcommand\N{{\mathbb N}}
\newcommand{\eps}{{\varepsilon}}
\newcommand{\bmax}{{\mathbf{m}}}
\def\XXint#1#2#3{{\setbox0=\hbox{$#1{#2#3}{\int}$ }
		\vcenter{\hbox{$#2#3$ }}\kern-.6\wd0}}
\newcommand{\Lip}{{\rm {Lip}}}
\newcommand{\diam}{{\rm {diam}}}
\newcommand{\dist}{{\rm {dist}}}
\newcommand\Id{{\rm Id}}
\newcommand{\sff}{\mathrm{II}}
\newcommand{\cG}{{\mathcal{G}}}
\newcommand{\Iq}{{\mathcal{A}}_Q}
\def\a#1{\left\llbracket{#1}\right\rrbracket}
\newcommand{\de}{\partial}
\newcommand{\R}{\mathbb{R}}
\newcommand{\mres}{\res}
\newcommand\B{{\mathbf{B}}}
\newcommand{\defeq}{\coloneqq}
\newcommand{\eqdef}{\eqqcolon}
\def\XXint#1#2#3{{\setbox0=\hbox{$#1{#2#3}{\int}$}
		\vcenter{\hbox{$#2#3$}}\kern-.5\wd0}}
\let\epsilon\varepsilon
\let\phi\varepsilon
\title[$C^\infty$ rectifiability of stationary varifolds]{$C^\infty$ rectifiability of stationary varifolds}
\author{Camillo Brena}
\address{School of Mathematics, Institute for Advanced Study, 1 Einstein Dr., Princeton NJ 08540, USA}
\email{cbrena@ias.edu}
\author{Camillo De Lellis}
\address{School of Mathematics, Institute for Advanced Study, 1 Einstein Dr., Princeton NJ 08540, USA}
\email{camillo.delellis@ias.edu}
\author{Federico Franceschini}
\address{School of Mathematics, Institute for Advanced Study, 1 Einstein Dr., Princeton NJ 08540, USA}
\email{ffederico@ias.edu}
\begin{document}

\begin{abstract}
		In this paper we prove that, for every integers $m\geq 2$ and $n\geq 1$, the support of any stationary $m$-dimensional integer rectifiable varifold $V$ in an open set $U\subset \mathbb R^{m+n}$ is $C^\infty$ rectifiable, namely it can be covered, up to an $\mathcal{H}^m$-null set, with countably many $C^\infty$ $m$-dimensional graphs. 
\end{abstract}

\maketitle
\tableofcontents
   
\section{Introduction}

Integer rectifiable $m$-dimensional varifolds, which in this note will be denoted by $V$, in an open set $U\subset \mathbb R^{m+n}$ can be defined as Radon measures 
\begin{equation}\label{e:varifold-rett}
\Theta\, \mathcal{H}^m \res E\, ,
\end{equation}
where $E\subset U$ is an $m$-dimensional rectifiable set, $\mathcal{H}^m$ denotes the Hausdorff $m$-dimensional measure, and $\Theta$ is a Borel function taking positive integer values ($\mathcal{H}^m$-a.e.); cf.\ \cite{DeLellisAllard,Simon}. The theory was pioneered by Almgren and especially Allard, see \cite{AllardFirst}. In the most general setting, first proposed by L.C. Young, a varifold is a nonnegative Radon measure on the Grassmannian of unoriented $m$-dimensional planes $G_m (U) = U \times G_m (\mathbb R^{m+n})$: with the latter definition the varifold is rectifiable if its marginal, denoted by $\|V\|$, on $U$ takes the form \eqref{e:varifold-rett} and its disintegration on the fibers $\{x\}\times \mathbb R^{m+n}$ (with $x\in E$) consists of the Dirac mass on the approximate tangent $T_x E$ to $E$. Since we will always consider only rectifiable varifolds, we can ignore these details; however, following the notation of Almgren and Allard, the Radon measure in \eqref{e:varifold-rett} will be denoted by $\|V\|$.

We will assume that $V$ is stationary in $U$: this means that, for every given $X\in C^\infty_c (U, \mathbb R^{m+n})$, if we let $\Phi_t$ be the one-parameter family of diffeomorphisms of $U$ generated by $X$, then
\begin{equation}\notag
\delta V (X) \defeq \left.\frac{d}{dt}\right|_{t=0} \|(\Phi_t)_\sharp V\| (U) = 0\, .
\end{equation}
Here $\psi_\sharp V$ denotes the varifold $(\psi (E), \Theta\circ \psi^{-1})$ when $\psi$ is a $C^1$ diffeomorphism.
It follows from Allard's monotonicity formula that, since $V$ is integral and stationary, we can assume that, without loss of generality, $E$ is $\supp(\|V\|)\cap U$ and $\Theta$ is given pointwise by the upper semicontinuous function
\begin{equation}\notag
\Theta (V, x) = \lim_{r\downarrow 0} \frac{\|V\| (B_r (x))}{\omega_m r^m}\, .
\end{equation}
This gives a ``canonical pair'' $(E, \Theta)$ and rids us of any tedious discussion of $\mathcal{H}^m$-null sets.

Allard's classical interior regularity theory, developed in the pioneering work \cite{AllardFirst}, implies that a stationary (integral) varifold is always regular in a relatively dense and open subset $\reg (V) \subset \supp (V)$. This means that for every $x\in \reg (V)$ there is a ball $\B_r (x)$ with the property that $\supp (V) \cap \B_r (x)$ is a smooth connected submanifold $\Sigma$ and $\|V\|\res \B_\rho (x) = \Theta_0 \mathcal{H}^m \res \Sigma$ for an appropriate integer costant $\Theta_0\geq 1$. Even though 53 years have passed since the appearance of Allard's work we still do not know whether the complement of $\reg (V)$ is $\mathcal{H}^m$-null (a natural expectation, given the known examples and some recent partial results like \cite{HS}, is in fact that such complement has Hausdorff dimension $m-1$). In \cite{CCSvarifolds} the first two authors and Stefano Decio proposed, as a first step towards the proof of the latter result, the following conjecture (see \cite[Conjecture~1.4]{CCSvarifolds}). 

\begin{con}\label{c:flat-high-order}
Assume $V$ is a stationary $m$-dimensional varifold in $U\subset \mathbb R^{m+n}$ and let $x_0\in U$ be a point at which $\supp (V)$ has an approximate tangent and
\begin{equation}\notag
\lim_{r\downarrow 0} \frac{\|V\| (\{\Theta(V,\,\cdot\,)\neq \Theta (V, x_0)\}\cap \mathbf{B}_r (x_0))}{r^m} = 0\, .
\end{equation}
Then there is a smooth classical minimal $m$-dimensional graph $\mathcal{M}$ in some neighborhood of $x_0$ with the property that 
\begin{equation}\notag
\int_{\mathbf{B}_r (x_0)} \dist (x, \mathcal{M})^2 \,\dd\|V\| (x) = o (r^N)\qquad \mbox{for every $N\in \mathbb N$\,.}
\end{equation}
\end{con}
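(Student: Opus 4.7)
The plan is to combine Allard's $\varepsilon$-regularity theorem with elliptic regularity for the minimal surface equation, and, in the higher-multiplicity regime, with an iterative excess-decay argument. The first observation is that the hypotheses force the rescaled varifolds $V_{x_0,r}\defeq(\eta_{x_0,r})_\sharp V$ to converge in the varifold topology, as $r\downarrow 0$, to $\Theta_0 \cH^m \res T$, where $\Theta_0\defeq \Theta(V,x_0)$ and $T$ is the approximate tangent plane. Indeed, by the definition of density one has $\|V\|(\B_r(x_0))/(\omega_m r^m)\to\Theta_0$; the existence of an approximate tangent concentrates $\supp(\|V\|)$ near $x_0+T$ at small scales; and the hypothesis on $\{\Theta\neq\Theta_0\}$ prevents extra sheets of different multiplicity from persisting in the blow-up. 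Consequently, the tilt and flat excesses of $V$ in $\B_r(x_0)$ with respect to $T$ both vanish as $r\downarrow 0$.

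In the case $\Theta_0 = 1$, this smallness allows one to apply Allard's classical $\varepsilon$-regularity theorem at some sufficiently small scale $r_0$: the conclusion is that $\supp(\|V\|)\cap \B_{r_0}(x_0)$ is a $C^{1,\alpha}$ graph over $T$ of multiplicity one, weakly solving the minimal surface equation. Standard elliptic regularity for this quasilinear equation upgrades $C^{1,\alpha}$ to $C^\infty$ (in fact real-analytic). Taking $\mathcal{M}$ to be this graph, one has $\dist(x,\mathcal{M})\equiv 0$ for every $x\in\supp(\|V\|)\cap \B_{r_0}(x_0)$, so the integral in the statement is identically zero for $r<r_0$, which trivially satisfies $o(r^N)$ for every $N$.

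The case $\Theta_0\geq 2$ is substantially harder, since Allard's theorem then only produces a multi-valued $C^{1,\alpha}$ graph approximation and branch points cannot a priori be excluded. My three-step plan is: (i) use the density hypothesis, the monotonicity formula, and upper semicontinuity of $\Theta$ to show that the ``branching region'' $\{\Theta<\Theta_0\}$ is very thin and lies in a tubular neighborhood of a single smooth candidate surface; (ii) construct that surface $\mathcal{M}$ as a ``center manifold'', in the spirit of the De Lellis--Spadaro construction for area-minimizing currents, approximating the average position of the $\Theta_0$ sheets; (iii) iterate a Campanato-type $L^2$ excess decay at scales $\lambda^k r_0$, driven by a frequency-function-type monotonicity, and bootstrap it into the infinite-order decay $o(r^N)$.

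The main obstacle is clearly step (iii) when $\Theta_0\geq 2$. For area-minimizing currents, Almgren's multi-valued function theory supplies both the required frequency monotonicity and the center manifold; no analogous machinery is currently known for general stationary varifolds, and even constructing $\mathcal{M}$ in step (ii) already requires new ideas. The density-constancy hypothesis here is substantially weaker than area-minimality, and it is far from obvious that a genuine frequency monotonicity holds at branch points under this assumption alone. In my view the crux is to extract, from the joint interaction of stationarity, the density-constancy hypothesis, and Allard's monotonicity formula, a monotone quantity that controls the $L^2$ deviation of $V$ from $\mathcal{M}$ at all small scales, thereby replacing area-minimality as the structural driver of the decay.
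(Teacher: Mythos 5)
The statement you are addressing is \textbf{Conjecture}~\ref{c:flat-high-order}: the paper poses it as an open problem and does \emph{not} prove it. What the paper does prove (the unnumbered theorem stated just after Theorem~\ref{rectthm}, deduced from Theorem~\ref{rectthm1}) is a strictly weaker assertion in which $\mathcal{M}$ is a $C^\infty$ graph but is \emph{not} claimed to be minimal; the authors say explicitly that they ``are not able to prove that $\mathcal{M}$ is minimal.'' Your proposal, likewise, does not deliver the full conjecture, and you say so honestly: your handling of $\Theta_0=1$ is correct (small two-sided excess at a small scale, Allard's $\varepsilon$-regularity, quasilinear elliptic bootstrap; the varifold itself is then a smooth minimal graph and the integral is identically zero at small radii), but for $\Theta_0\geq 2$ you candidly acknowledge that the center-manifold and frequency-monotonicity machinery you would invoke is not available for general stationary varifolds. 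That diagnosis of the obstruction is accurate.

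It is still worth contrasting your proposed route for $\Theta_0\geq 2$ with the route the paper actually follows to its weaker theorem, since they differ genuinely. You aim to build a single minimal center manifold and then run a Campanato iteration driven by a putative frequency monotonicity. The paper instead proves a decay lemma (Lemma~\ref{decaylemma}) in which the comparison object is not a plane but any sufficiently flat \emph{minimal surface}: one pass contracts the scaled $L^2$ distance to the best minimal approximant by a factor $C(\sqrt{\boldsymbol{\eta}}+\sqrt{\boldsymbol{\eps}})$, where $\boldsymbol{\eta}$ is precisely the density-defect quantity appearing in your hypothesis. Since your hypothesis forces $\boldsymbol{\eta}(r)\to 0$ as $r\downarrow 0$, iterating this at dyadic scales gives decay faster than any polynomial \emph{without} any monotone quantity. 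The resulting sequence of minimal approximants $M^k_z$ then supplies compatible Whitney data (Lemma~\ref{rvefdsacdsca}), and Whitney's extension theorem produces the $C^\infty$ graph $\mathcal{M}$. This is exactly where minimality is lost: the Whitney extension has no reason to solve the minimal surface system away from the set $G$, even though each $M^k_z$ does, and the formal $\infty$-jets at points of $G$ constrain $\mathcal{M}$ only on a set that may have empty interior. So neither your proposal nor the paper resolves the conjecture as stated: the paper's device of replacing planes by minimal surfaces in the decay buys the smooth (non-minimal) weakening with no frequency function at all, while the upgrade to a genuinely minimal $\mathcal{M}$ for $\Theta_0\geq 2$ remains open.
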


In this paper we will prove a partial result in the above direction, in fact we will first prove the following ``$C^\infty$ rectifiability'' result. 

\begin{thm}[Smooth rectifiability]\label{rectthm}
Let $V$ be an $m$-dimensional  stationary varifold in an open set $U\subset \mathbb R^{m+n}$. Then $V$ is $C^{\infty}$-rectifiable. Namely, there exist countably many $C^{\infty}$ maps $f_k:\R^m\supset B_1^m\rightarrow U$ such that 
\begin{equation*}
    \|V\|\Big(U\setminus \bigcup_k f_k(B_1^m)\Big)=0\, .
\end{equation*}
\end{thm}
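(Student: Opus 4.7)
The plan is to prove that for $\mathcal{H}^m$-almost every $x_0 \in \supp(\|V\|)$, a neighborhood of $x_0$ in $\supp(\|V\|)$ can be covered, up to an $\mathcal{H}^m$-null set, by the image of a single $C^\infty$ map $f\colon B_1^m \to U$; a Besicovitch-type covering argument then assembles these local charts into the countable family $\{f_k\}_k$ required by the statement.

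The first step is to identify the good points at which such a local construction can be run. Rectifiability of $\|V\|$ together with Allard's monotonicity formula ensure that $\mathcal{H}^m$-a.e.\ $x_0 \in \supp(\|V\|)$ enjoys (i) a unique approximate tangent plane $T_{x_0}$; (ii) an integer density $Q \defeq \Theta(V,x_0) \geq 1$; and (iii) approximate continuity of $\Theta(V,\,\cdot\,)$ at $x_0$ with value $Q$, so that $\|V\|(\{\Theta \neq Q\} \cap \B_r(x_0)) = o(r^m)$. Fixing coordinates with $T_{x_0} = \R^m \times \{0\}$, the scale-invariant flat excess
\[
\bE(V,x_0,r) \defeq r^{-m-2} \int_{\B_r(x_0)} \dist(y, T_{x_0})^2 \,\dd\|V\|(y)
\]
vanishes along a sequence $r_k \downarrow 0$ for $\mathcal{H}^m$-a.e.\ such $x_0$; this follows from the fact that the blow-ups of $V$ at $x_0$ converge to the flat multi-plane $Q\a{T_{x_0}}$, itself a consequence of rectifiability and monotonicity.

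At such a good radius $r$, I would apply Allard's strong Lipschitz approximation, together with its $Q$-valued refinement of De Lellis--Spadaro, to produce a Lipschitz map $u\colon B^m_{r/2} \to \Iq(\R^n)$ whose graph coincides with $\supp(\|V\|) \cap \B_{r/4}(x_0)$ off a set of small $\|V\|$-mass. The stationarity $\delta V = 0$ descends to a stationarity condition for $u$: on the relatively open region where the $Q$ sheets of $u$ locally separate, every individual sheet is a Lipschitz weak solution of the classical minimal surface system, and an elliptic bootstrap upgrades each such sheet from Lipschitz to $C^\infty$. This covers every point of $\supp(\|V\|) \cap \B_{r/4}(x_0)$ except those in the branch set of $u$ and in the residual set where the Lipschitz approximation degenerates.

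The main obstacle is to handle the union of these leftover sets across all scales and show that it is $\mathcal{H}^m$-null. In the area-minimizing setting Almgren's big regularity theorem delivers sharp dimension bounds for such sets, but for merely stationary $V$ no analogous tool is presently available, so this will require a genuinely new argument. I expect the technical heart of the proof to be an iterative refinement: on the uncovered bad set produced at scale $r$ one reapplies the construction at every admissible finer scale, combined with a quantitative stratification in the spirit of Naber--Valtorta, so that the residual uncovered subset of $\supp(\|V\|)$ becomes $\mathcal{H}^m$-null after exhausting all scales.
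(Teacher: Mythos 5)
Your proposal reproduces the well-known $C^{1,\alpha}$ rectifiability route (Brakke, and the more recent \cite{CCSvarifolds}): at $\mathcal{H}^m$-a.e.\ point take a $Q$-valued Lipschitz approximation, regularize the sheets where they separate, and hope to control the leftover set. It does not, however, contain a proof of the theorem, and you acknowledge this yourself when you say the technical heart ``will require a genuinely new argument.'' The gap is exactly where you flag it: the branch set of the $Q$-valued map, together with the residual set where the Lipschitz approximation degenerates, is not known to be $\mathcal{H}^m$-null for a merely stationary varifold, and no Naber--Valtorta-type quantitative stratification is available here (those techniques require more rigidity than stationarity alone provides --- this is precisely the open problem of whether $\supp(V)\setminus\reg(V)$ is null). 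A cover of the separated sheets by $C^\infty$ minimal graphs leaves a potentially large uncovered set, so the Besicovitch assembly never closes.

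The paper avoids this obstruction entirely by \emph{not} attempting to regularize individual sheets and \emph{not} asking the covering surfaces to be minimal. Its key new idea is the decay lemma (Lemma~\ref{decaylemma}): replace the reference plane of the classical Allard--De Giorgi scheme by the best approximating \emph{classical minimal surface}, and show that the rescaled $L^2$ distance of $V$ to such surfaces contracts by an arbitrarily small factor $\gamma$ (at the price of shrinking the initial threshold). Iterating this gives, at $\mathcal{H}^m$-a.e.\ point $z$, a sequence of $\delta$-flat minimal graphs $M^k_z$ with $\int_{\bC_{r_k}(z)} d_{M^k_z}^2\,\dd\|V\| = o(r_k^{N})$ for every $N$. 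Lemma~\ref{verfdsavdsc} turns this super-polynomial decay into Whitney-compatible estimates on the Taylor polynomials of the graphs $g^k_z$, and the Whitney extension theorem then produces a single $C^\infty$ (generally \emph{not} minimal) graph containing the good set. The two technical tools needed to run the contraction --- the Lipschitz approximation over the normal bundle of a curved $\delta$-flat $M$ (Theorem~\ref{Lipapprox}) and the tilt-excess/Caccioppoli inequality relative to $M$ (Proposition~\ref{prop:caccioppoli}), both proved via the Hessian of $d_M^2$ --- are absent from your outline and are precisely what allows planes to be upgraded to curved minimal models at each scale. Without some analogue of this mechanism, the branch set cannot be swallowed, and your sketch stalls at the step you identified.
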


But as a consequence of our approach we can also show the following.

\begin{thm}\notag
Assume $V$ and $x_0$ satisfy the assumptions of Conjecture \ref{c:flat-high-order}. Then there is a smooth classical (not necessarily minimal) $m$-dimensional graph $\mathcal{M}$ in some neighborhood of $x_0$ with the property that 
\begin{equation}\notag
\int_{\mathbf{B}_r (x_0)} \dist (x, \mathcal{M})^2 \,\dd\|V\| (x) = o (r^N)\qquad \mbox{for every $N\in \mathbb N$\,.}
\end{equation}
\end{thm}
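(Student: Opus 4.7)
The plan is to extract the statement from the proof of Theorem~\ref{rectthm} specialized to the point $x_0$. Write $Q \defeq \Theta(V, x_0)$ and let $\pi_0$ denote the approximate tangent plane to $\supp(V)$ at $x_0$. Under the hypotheses of Conjecture~\ref{c:flat-high-order}, the rescaled varifolds $(\eta_{x_0, r})_{\sharp} V$ converge as $r \downarrow 0$ to $Q\,\mathcal{H}^m\res \pi_0$, placing $x_0$ in the regime of a flat point of constant density $Q$. In particular the classical tilt excess decays to zero, and we sit inside a regime where strong Lipschitz $Q$-valued approximations of Almgren type are available at every scale.

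The first step is to identify $\mathcal{M}$ by invoking the construction underlying Theorem~\ref{rectthm}. That construction builds, at $\|V\|$-a.e.\ point of $\supp(V)$, a smooth graph capturing a $\|V\|$-full-density neighborhood of the point, and the hypotheses of Conjecture~\ref{c:flat-high-order} are tailor-made to ensure that $x_0$ is among these ``good'' points, with tangent plane $\pi_0$. Take $\mathcal{M}$ to be the resulting graph: the graph of a smooth $\varphi: \pi_0\cap B^m_{r_0} \to \pi_0^\perp$ with $\varphi(x_0) = 0$ and $\nabla\varphi(x_0) = 0$.

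The second step is a quantitative reduction. Since $\dist(x, \mathcal{M}) \leq 2r$ for $x \in \mathbf{B}_r(x_0)$ and $\dist(x, \mathcal{M}) = 0$ on $\mathcal{M}$, we have
\begin{equation*}
\int_{\mathbf{B}_r(x_0)} \dist(x, \mathcal{M})^2 \, \dd\|V\|(x) \leq 4 r^2 \, \|V\|\!\left(\mathbf{B}_r(x_0)\setminus \mathcal{M}\right),
\end{equation*}
so it suffices to show that $\|V\|(\mathbf{B}_r(x_0)\setminus\mathcal{M}) = o(r^N)$ for every $N$. The constant-density hypothesis, combined with the approximate tangent, rules out ``second sheets'' of $\supp V$ through or past $x_0$: any such sheet would create a positive-$\|V\|$-density set of points whose local density differs from $Q$, contradicting the $o(r^m)$ condition at the corresponding scale. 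Hence the off-graph mass near $x_0$ is essentially accounted for by the exceptional set in Conjecture~\ref{c:flat-high-order}, yielding the baseline $o(r^m)$ decay.

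The main obstacle is bridging the gap from $o(r^m)$ to $o(r^N)$ for every $N$. The strategy would be to bootstrap by iterating the scheme that produces Theorem~\ref{rectthm}: at each dyadic scale $r_k = 2^{-k}$ one has an improved smooth approximation of the support, and stationarity---through the first variation identity and the attendant tilt-excess decay---should convert a polynomial estimate at scale $r_k$ into one with strictly better exponent at scale $r_{k+1}$. Summing the geometric improvements yields the super-polynomial decay required. The freedom to take $\mathcal{M}$ non-minimal is essential in this plan: we are allowed to approximate $\supp V$ by arbitrarily high-order polynomial graphs rather than by an exact center manifold critical for the area functional, which is precisely what lets us conclude without settling the full content of Conjecture~\ref{c:flat-high-order}.
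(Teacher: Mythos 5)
Your reduction in the second step is incorrect and this is a genuine gap. You propose to bound
\begin{equation*}
\int_{\mathbf{B}_r(x_0)} \dist(x,\mathcal{M})^2 \, \dd\|V\|(x) \leq 4r^2 \, \|V\|\bigl(\mathbf{B}_r(x_0)\setminus\mathcal{M}\bigr)
\end{equation*}
and then try to show $\|V\|(\mathbf{B}_r(x_0)\setminus\mathcal{M}) = o(r^N)$. But one should never expect $\supp V$ to literally lie on $\mathcal{M}$: the varifold is only close to the smooth graph, not contained in it, so $\|V\|(\mathbf{B}_r(x_0)\setminus\mathcal{M})$ is generically comparable to $\|V\|(\mathbf{B}_r(x_0)) \approx Q\omega_m r^m$, which is not $o(r^N)$ for $N>m$. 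The measure of the off-graph part cannot decay super-polynomially; it is the \emph{distance} to the graph, evaluated on $\supp V$, that decays. The correct and much stronger reduction, which the paper establishes via Theorem~\ref{rectthm1}, is an $L^\infty$ estimate: $\sup_{\supp V\cap\mathbf{B}_r(x_0)} d_{\mathcal{M}} = o(r^N)$ for every $N$, from which the $L^2$ statement follows trivially because $\|V\|(\mathbf{B}_r(x_0))\le C r^m$. Your side remark about the constant-density hypothesis ``ruling out second sheets'' is also off when $Q>1$; there the support is genuinely multi-sheeted near $x_0$, and $\mathcal{M}$ arises as the graph of the average of a $Q$-valued Lipschitz approximation, not as a surface covering all the sheets.

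Beyond the reduction, your third step (the bootstrap from $o(r^m)$ to $o(r^N)$) is asserted but not carried out, and does not engage with the key technical novelty the paper relies on: replacing the best approximating \emph{plane} with the best approximating \emph{minimal surface} in the Allard--De Giorgi decay, so that after each dyadic rescaling one gains a fresh factor of $\sqrt{\boldsymbol{\eta}}+\sqrt{\boldsymbol{\eps}}$ in the excess (Lemma~\ref{decaylemma}), together with Whitney-type compatibility estimates for the Taylor jets of the successive comparison surfaces (Lemma~\ref{verfdsavdsc} and Lemma~\ref{rvefdsacdsca}). The iteration of the standard tilt-excess decay against a fixed plane, which your sketch appears to invoke, only yields $C^{1,\alpha}$-type estimates, not super-polynomial decay; the freedom to change the model at each scale is exactly what is needed and is absent from your argument.
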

This can be seen as an answer to a  weaker (as  we are not able to prove that $\mathcal{M}$  is minimal!) version of Conjecture \ref{c:flat-high-order}.

Before coming to a description of the strategy of our proof we wish to recall what is already known in the literature. First of all, it was pointed out by Brakke in \cite{Brakke} that, in combination with Almgren's theory of multivalued function, Allard's approach in \cite{AllardFirst} already implies the $C^{1,\alpha}$ rectifiability for every $\alpha<1$: the submanifolds of the covering of the rectifiable set $\supp (V)$ can actually be chosen to be $C^{1,\alpha}$. A much more interesting development, due to Menne in \cite{MenneJGA}, is the $C^2$-rectifiability. In fact Menne's theorem applies to varifolds with bounded mean curvature and it is thus an optimal statement in his context. 

\subsection{Acknowledgments} 
 The first author is supported by the National Science Foundation under Grant No.\ DMS-1926686.
The third author gratefully acknowledges support from the Giorgio and Elena Petronio Fellowship while working on this project.

The authors thank G.\ De Philippis for discussions around the topic of Section \ref{vefdscxzcads}.
\section{Strategy of the proof}

The most important idea of the proof is to modify the classical Allard--De Giorgi decay lemma. The latter shows that, under the hypothesis that in a given ball $\B_\rho$ the density is constant and the (appropriately scaled) $L^2$ distance of the varifold to the best approximating plane $\pi$ falls below a sufficiently small threshold $\kappa>0$, then in a smaller ball with a proportional radius the same quantity is (less than) a fraction $\gamma$ of what it is in $\B_\rho$. This ensures that such $L^2$ distance decays like a power law at many points, in turn giving the $C^{1,\alpha}$ regularity of the varifold in a neighborhood of each such point. From this Allard concludes the regularity of the varifold in a dense open subset of its support. The obstruction in showing that the singular set (the complement of the regular set) is $\mathcal{H}^n$ null is in the assumption about the density. Brakke later pointed out in \cite{Brakke} that the assumption on the density can be suitably weakened if Allard's strategy is combined with Almgren's theory of multivalued functions: it is possible then to reach the same decay conclusion but the nature of the ``new'' density assumption only allows to {\em cover} $\mathcal{H}^m$-almost all the support of the varifold with countably many $C^{1,\alpha}$ submanifolds, hence concluding its $C^{1,\alpha}$ rectifiability (see \cite{CCSvarifolds} for a fresh take on the latter idea). 

\medskip

Our main point is to substitute the ``best approximating plane $\pi$'' with the ``best approximating classical minimal surface'' in the rescaled $L^2$ distance.  Using all minimal surfaces we are able to reach a decay parameter $\gamma$ arbitrarily small, while keeping the ratio between the scales fixed. We need to lower the threshold $\kappa$ depending on $\gamma$ and we need to adjust accordingly the assumption on the density. The decay lemma which we are able to reach is still good enough to show that at $\mathcal{H}^m$-a.e.\ point of the support the scaled $L^2$ distance to the ``best minimal graph'' decays faster than any power. In particular classical PDE estimates on (sufficiently flat) smooth minimal surfaces and a standard decomposition trick allows us to prove Theorem \ref{rectthm}. 

The idea that planes can be substituted with a larger class of models to gain directly more regularity is not new in the literature. For instance it has been used by Savin in \cite{SavinAllard}: in the latter reference the best approximating plane is substituted with the ``best approximating parabola'', allowing him to get directly to $C^2$ regularity for his notion of viscosity solution of the minimal surface system. In this work we push the latter idea to the extreme, reaching directly $C^\infty$ regularity.

\medskip

In order to implement our strategy we need two technical tools which might be of independent interest. The first is a multivalued Lipschitz approximation \`a la Almgren (for varifolds) on the normal bundle of a $C^2$ submanifold $M$; our argument for this is a suitable modification of the one we give in \cite{CCSvarifolds} for the Almgren's approximation, which is alternative to Almgren's original approach in \cite{Almgren00}, adopted subsequently in \cite{Brakke} and \cite{MenneJGA}. The second is a generalization of the Allard's ``tilt-excess inequality'', for which we refer to \cite[Lemma 8.13]{AllardFirst}, \cite[Lemma 22.2]{Simon}, and \cite[Proposition 4.1]{DeLellisAllard}. The original tilt excess inequality of Allard can be understood as the counterpart of the well-known Caccioppoli inequality for solutions of elliptic PDEs, provided we think of the varifold as a suitable measure-theoretical generalization of a classical minimal graph. Our version can then be understood as the Caccioppoli-type inequality which we would conclude if it were possible to parametrize our stationary varifold as a graph over the normal bundle of a reference minimal surface $M$. In the proof we take advantage of ideas in the recent work \cite{DPGS}.   

\subsection{Decay lemma} In this section we state the decay lemma which is at the core of our work. $\pi^\perp$ denotes the orthogonal complement of the $m$-dimensional linear subspace $\pi$, while $B_r (x, \pi) = \B_r (x)\cap (x+\pi)$ and $\bC_r (x,\pi) = B_r (x, \pi) + \pi^\perp$. $\pi$ and $x$ will be dropped if they are (respectively) $\pi_0= \R^m\times \{0\}$ and the origin. We will moreover use the shorthand notation $d_E$ for the distance to a set $E$ and $d_V$ as a shorthand notation for $d_{\supp (V)}$.

\begin{definition}[$\delta$-flatness]\label{e:delta-flatness}
Let $M$ be an $m$-dimensional submanifold of $\bC_r\subset\R^{m+n}$. We say that $M$ is $\delta$-flat in $\bC_r$ if $M\cap\bC_r$ is the the graph of a map $g\in C^2(B_r,\R^n)$ that satisfies the scale-invariant estimates
\begin{equation}\notag
    \sup_{x\in B_r} r^{-1}|g(x)|+|Dg(x)|+r|D^2 g(x)|\le \delta\,.
\end{equation}
\end{definition}


\begin{lem}[Decay lemma]\label{decaylemma}
For every triple of integers $m, n, Q\geq 1$ there are positive constants $\eta_0$, $\eps_0$, $\delta_0$, and $C$ with the following property.
Let $V$ be an $m$-dimensional stationary varifold in $\bC_{100}$ and $M$ a $\delta$-flat minimal $m$-dimensional submanifold in $\bC_{200}$ such that $\delta \leq \delta_0$,
\begin{align}
& \supp(V)\cap \bC_{100}\subset \{(x,y):|y|\le 1\}\, ,\label{vrefdsasvd}\\
& Q- \textstyle{\frac{1}{2}}<\frac{\|V\|(\bC_{30})}{\omega_m30^m} <Q+\textstyle{\frac{1}{2}}\, , \label{vrefdasvd-2}\\
&\frac{\|V\|(\bC_{100})}{\omega_m100^m} \le Q+\textstyle{\frac{1}{2}}\, , \label{vrefdvasvd-3}\\
& \boldsymbol{\eta} \defeq \frac{\|V\|(\{\Theta_V<Q\}\cap \bC_{100})}{\omega_m100^m}\le \eta_0\, ,\quad\mbox{and}\label{bvtewfadsvcasdc}\\   
&\boldsymbol{\eps} \defeq \Big(\frac{1}{\omega_m100^{m+2}}\int_{\bC_{100}} d^2_M  \,\dd\|V\|\Big)^{1/2} \le \eps_0\, .
\end{align}
Then, there exists a $m$-dimensional minimal surface $M',$  $\delta'$-flat in $\bC_{2}$, such that
    \begin{equation}\label{eq:contraction}
    \Big(\frac{1}{\omega_m}\int_{\bC_{1}} d^2_{M'} \, \dd\|V\| \Big)^{1/2}\le C(\sqrt{\boldsymbol{\eta}}+\sqrt{\boldsymbol{\eps}})\boldsymbol{\eps}\quad \text{and} \quad\delta'\le \delta+C\boldsymbol{\eps}\,.
    \end{equation}
\end{lem}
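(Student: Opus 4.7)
\medskip

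\textbf{Overall plan.} The strategy mirrors the classical Allard--De Giorgi scheme, but with the reference plane replaced by the $\delta$-flat minimal surface $M$, and with Almgren's $Q$-valued functions replacing single-valued graphs (since the density is only required to be close to $Q$ on average, through the smallness of $\boldsymbol{\eta}$). Thus I parametrise $V$ as a multivalued graph in the normal bundle $NM$, replace this graph by an almost Dir-minimising multivalued object, average out the sheets to get a single-valued classical harmonic section of $NM$ whose first-order Taylor expansion defines a new surface $M'$ which I then perturb to make minimal. The final estimate in \eqref{eq:contraction} comes from combining (i) the $L^2$ error between $V$ and the multivalued graph, (ii) the $L^2$ error between the Lipschitz graph and the Dir minimiser, (iii) the standard decay of Dir minimisers at smaller scale.

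\medskip

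\textbf{Step 1: Multivalued Lipschitz approximation on $NM$.} I apply the multivalued Lipschitz approximation theorem announced in the introduction to get, on $M\cap \bC_{20}$, a $Q$-valued map $u\colon M\to \mathcal{A}_Q(NM)$ such that
\[
\operatorname{graph}(u)=V\res K, \qquad \|V\|(\bC_{20}\setminus K)\le C(\boldsymbol\eta+\boldsymbol\eps^{1+\alpha}),
\]
for some $\alpha>0$, with $\Lip(u)\le \boldsymbol\eps^{\alpha}+\delta$ and with
$\int_M \mathcal{G}(u,Q\a{0})^2 \lesssim \boldsymbol\eps^2$. The construction follows the lines of the analogous statement for planes in \cite{CCSvarifolds}, with the maximal function and Whitney decomposition carried out on $M$ instead of on $\R^m$; $\delta$-flatness makes $M$ diffeomorphic to a ball of $\R^m$ with uniformly small curvature, so no essentially new difficulty appears beyond carrying along curvature terms.

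\medskip

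\textbf{Step 2: From Lipschitz approximation to Dir-minimising approximation.} Using the stationarity of $V$ against carefully chosen test vector fields on $\bC_{100}$, combined with the generalised tilt-excess (Caccioppoli) inequality of the introduction, I show that $u$ is almost Dir-stationary on balls $B_r\subset M$ with $r$ of order $1$: for every $\varphi\in C^1_c(M,NM)$,
\[
\Big|\int_M \langle Du, D\varphi\rangle\Big| \le C(\sqrt{\boldsymbol\eta}+\sqrt{\boldsymbol\eps})\,\boldsymbol\eps\,\|\varphi\|_{C^1} .
\]
Running the harmonic approximation theorem for $Q$-valued maps on $M$ (again, curvature of $M$ being a lower-order perturbation because $M$ is $\delta$-flat), I produce a Dir-minimiser $w\colon M\to \mathcal{A}_Q(NM)$ such that $\int_M \mathcal{G}(u,w)^2 \le C(\boldsymbol\eta+\boldsymbol\eps)\,\boldsymbol\eps^2$.

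\medskip

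\textbf{Step 3: Building $M'$ and closing the estimate.} The Almgren average $\bar w \defeq \frac{1}{Q}\sum_i w_i$ is a classical harmonic section of $NM$, and standard elliptic estimates on $M$ give a decay of the form $\int_{B_1}|\bar w - L|^2 \le C\,(\text{scale})^{2}\int_{B_{20}}|\bar w|^2$, where $L$ is the first order Taylor polynomial of $\bar w$ at the origin. I take as candidate $M'$ the perturbation of $M$ whose local parametrisation is $L$, made minimal by solving the minimal surface equation over $M$ with small boundary data (which is possible by the implicit function theorem on $C^{2,\alpha}$-small perturbations of a minimal graph). The bound $\delta'\le \delta+C\boldsymbol\eps$ is then immediate from the $L^\infty$ estimates on $L$ and Schauder theory, while the $L^2$ decay estimate follows from the triangle inequality,
\[
\int_{\bC_1} d_{M'}^2\, \dd\|V\| \lesssim \int_{\bC_1\cap K} d_{\operatorname{graph}(w)}^2 \,\dd\|V\| + \int_M \mathcal{G}(u,w)^2 + \|V\|(\bC_{1}\setminus K),
\]
where the first term is controlled by the Dir decay, the second by Step 2, and the third by Step 1.

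\medskip

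\textbf{Main obstacle.} The delicate point is Step 2: transferring the stationarity of $V$ into an almost Dir-stationarity for the multivalued graph, with a loss of only $\sqrt{\boldsymbol\eta}+\sqrt{\boldsymbol\eps}$ (rather than a fixed small constant). This requires the generalised tilt-excess inequality of \cite{DPGS} in the form stated in the paper, together with a careful analysis of how the test vector fields on $M$ lift to vector fields on $\bC_{100}$ through the graph parametrisation. All the remaining difficulties (Whitney decomposition on $M$, harmonic approximation in the presence of curvature, perturbing $M$ into a minimal $M'$) are essentially routine once the stationarity/Dir-stationarity link is established.
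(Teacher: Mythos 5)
Your overall scheme—Lipschitz approximation over $M$, transfer stationarity to an almost-harmonicity estimate, build $M'$ from the harmonic/minimal replacement—matches the paper's plan. However, there is a genuine gap in Step 3, and your Step 2 misattributes the source of the $\sqrt{\boldsymbol\eta}$ factor.

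The paper never invokes $Q$-valued Dir-minimizers or $Q$-valued harmonic approximation. Instead it works directly with the (single-valued) average $\bar f = \tfrac1Q\sum_i f_i$, shows $\bigl|\int_{B^M_{10}} \LL_M\bar f\cdot\varphi\bigr|\le C\|D\varphi\|_{L^\infty}\boldsymbol\eps^2$ (with \emph{no} $\eta$), replaces $\bar f$ by a genuine $\LL_M$-harmonic $\tilde f$ with $\|\tilde f-\bar f\|_{L^2}\le C\boldsymbol\eps^{3/2}$, and then perturbs $\tilde f$ to solve the minimal surface system via the inverse function theorem. Your claimed almost-Dir-stationarity bound $C(\sqrt{\boldsymbol\eta}+\sqrt{\boldsymbol\eps})\boldsymbol\eps\|\varphi\|_{C^1}$ is strictly weaker than the $C\boldsymbol\eps^2$ one actually obtains from stationarity, and $\eta$ plays no role there: stationarity controls the \emph{average}, and does so at order $\boldsymbol\eps^2$.

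The serious gap is in your closing triangle inequality. You write
\begin{equation*}
\int_{\bC_1} d_{M'}^2\,\dd\|V\| \lesssim \int_{\bC_1\cap K} d_{\operatorname{graph}(w)}^2\,\dd\|V\| + \int_M \mathcal{G}(u,w)^2 + \|V\|(\bC_1\setminus K)
\end{equation*}
and say the first term is ``controlled by the Dir decay''. But $M'$ is built from the \emph{single-valued} average $\bar w$, while $\operatorname{graph}(w)$ is the \emph{multivalued} graph, so the missing term is precisely the spread $\int_M \mathcal{G}(w, Q\a{\bar w})^2$, and this is \textbf{not} controlled by Dir decay: a $Q$-valued Dir-minimizer with a branch point has spread of the same order as its Dirichlet energy, decaying only H\"older (not linearly). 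This is exactly where the density assumption \eqref{bvtewfadsvcasdc} must enter: on the good set $K$, the density identity \eqref{densdvacs} implies that $f(x)\ne Q\a{\bar f(x)}$ forces each sheet to carry density $<Q$, so the set $F=\{f\ne Q\a{\bar f}\}$ satisfies $\mathcal H^m(F)\le C\boldsymbol\eta + C\boldsymbol\eps^2$. Combined with the height bound $d_{\bar\Gamma}\le C\boldsymbol\eps$ on $\Gamma$, this gives $\bigl(\int_{\Gamma\cap\bC_1}d_{\bar\Gamma}^2\bigr)^{1/2}\le C\sqrt{\boldsymbol\eta}\,\boldsymbol\eps + C\boldsymbol\eps^2$, which is the origin of the $\sqrt{\boldsymbol\eta}$ in \eqref{eq:contraction}. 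Without this observation you have no mechanism to bound the spread term, and the desired decay factor $(\sqrt{\boldsymbol\eta}+\sqrt{\boldsymbol\eps})$ cannot be reached.
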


The assumptions of Lemma \ref{decaylemma} imply in particular the following two facts, which are not directly related with the decay of the excess.
First, not only $V$  is close to $M$, but also  $M$ is close to $V$, in the $L^2$ sense. In particular, $V$ ``has no holes'', in an integral sense. This is stated in \eqref{verfdsavdsc1}. Second, whenever we have another $\delta$-flat minimal surface $M''$ that satisfies a property similar to the one in \eqref{eq:contraction}, we can bound the distance of $M''$ to $M$. This is \eqref{verfdsavdsc2} and will play a key role in the derivation of the Whitney estimates that we use to show $C^\infty$-rectifiability.
\begin{lem}\label{verfdsavdsc}
    Under the assumptions of Lemma \ref{decaylemma}, the following holds.
    \begin{itemize}
        \item There is a constant $C$ which depends only on $m$, $n$ and $Q$ such that
    \begin{equation}\label{verfdsavdsc1}
       \Big( \frac{1}{\omega_m }\int_{M\cap\bC_{1}}d_{V}^2\,\dd\mathcal{H}^m \Big)^{1/2}\le C\boldsymbol{\eps}\, .
    \end{equation}
    \item Let $x$ with $|x|<2$ and let $M''$ be an $m$-dimensional minimal surface which is $\delta_0$-flat in $\bC_2(x)$ and 
    \begin{equation}\notag
        \Big(\frac{1}{\omega_m}\int_{\bC_{1}(x)} d^2_{M''} \, \dd\|V\| \Big)^{1/2}\le \sigma\,,
    \end{equation}
    for a $\sigma>0$  small  enough, depending on $m$, $n$ and $Q$.
    If we write $M$ as the graph of $g$ and $M''$ as the graph of $g''$ (over $B_2$), then 
    \begin{equation}\label{verfdsavdsc2}
        \|D^l(g-g'')\|_{L^\infty(B_{1/10}(x))}\le C_l(\eps+\sigma)\qquad\text{for every $l\ge 0$}\,,
    \end{equation}
    where $C_l$ is a constant that depends only upon $m$, $n$,  $Q$ and $l$.
        \end{itemize}
\end{lem}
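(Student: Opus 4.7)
The two statements share a common mechanism: convert \(L^2\)-closeness of \(V\) to a reference minimal surface into quantitative geometric closeness, via the multi-valued Lipschitz approximation of $V$ over that surface developed in this paper (the first of the two ``technical tools'' announced in the introduction).

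\textbf{Proof of \eqref{verfdsavdsc1} (``no holes'').}
The plan is to apply the multi-valued Lipschitz approximation of \(V\) over \(M\) on a slightly enlarged cylinder (e.g.\ $\bC_{50}$), producing a \(Q\)-valued Lipschitz map \(u\colon K \to \mathcal{A}_Q(M^\perp)\) on a good subset \(K \subset M \cap \bC_{50}\) together with estimates of the form
\[
\mathcal{H}^m\bigl((M \cap \bC_{50})\setminus K\bigr) \le C\boldsymbol{\eps}^2,\qquad
\int_K |u|^2 \,\dd\mathcal{H}^m \le C\boldsymbol{\eps}^2,
\]
and with \(V\res \mathbf{p}_M^{-1}(K)\) coinciding with the graph of \(u\); here $\boldsymbol{\eta}$ enters only to certify that the density is \(\ge Q\) on a large portion of the good set and does not pollute the right-hand sides. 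On \(K\cap \bC_1\) one has the pointwise bound \(d_V(y) \le |u(y)|\), while on \((M\cap \bC_1)\setminus K\) the crude bound \(d_V \le C\) (from \eqref{vrefdsasvd} and \(\delta\)-flatness of \(M\)) combined with the bad-set measure estimate gives \(\int d_V^2 \lesssim \boldsymbol{\eps}^2\). Summing the two contributions yields \eqref{verfdsavdsc1}.

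\textbf{Proof of \eqref{verfdsavdsc2} (rigidity of the approximating minimal graph).}
I would separate the argument into a crude \(L^2\) estimate and a boost via elliptic regularity. For the former, the target is
\[
\|g-g''\|_{L^2(B_{1/2}(x))}\le C(\boldsymbol{\eps}+\sigma).
\]
To obtain it, I combine the two-sided closeness of \(V\) to both \(M\) and \(M''\) on \(\bC_1(x)\): applying the first bullet (on the rescaled cylinder around \(x\)) gives \(\int_{M\cap \bC_1(x)} d_V^2\, \dd\mathcal{H}^m \lesssim \boldsymbol{\eps}^2\), while the hypothesis gives \(\int_{\bC_1(x)} d_{M''}^2 \,\dd\|V\|\le \sigma^2\omega_m\). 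Combining via the Lipschitz approximation of \(V\) over \(M\), for most \(w\in B_{1/2}(x)\) the point \((w,g(w))\in M\) is within \(O(\boldsymbol{\eps})\) of \(\supp(V)\), whose nearest point is in turn within \(O(\sigma)\) of \(M''=\mathrm{gr}(g'')\); by \(\delta_0\)-flatness of both graphs this translates, after a change of variables and Chebyshev, into the displayed \(L^2\) bound for \(g-g''\) on \(B_{1/2}(x)\).

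For the boost, \(g\) and \(g''\) are smooth solutions of the minimal surface system \(F(\cdot)=0\), so
\[
0 = F(g) - F(g'') = \Bigl(\int_0^1 DF\bigl(tg+(1-t)g''\bigr)\,\dd t\Bigr)(g-g'') \eqdef L(g-g''),
\]
and \(h = g-g''\) solves \(Lh=0\) with \(L\) a linear, uniformly elliptic second-order operator whose coefficients are smooth functions of \((Dg,Dg'')\), hence \(C^\infty\)-bounded once \(\delta_0\) is small. Standard interior elliptic estimates then yield
\[
\|D^l h\|_{L^\infty(B_{1/10}(x))}\le C_l \|h\|_{L^2(B_{1/2}(x))}\le C_l(\boldsymbol{\eps}+\sigma)\,,
\]
which is \eqref{verfdsavdsc2}.

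\textbf{Main obstacle.} The delicate step is the crude \(L^2\) estimate for \(g-g''\) in \eqref{verfdsavdsc2}: it requires converting two integrals against \(\|V\|\) (of \(d_M^2\) and \(d_{M''}^2\)) into a single integral of \(|g-g''|^2\) over a base of the form \(B_{1/2}(x)\subset \pi_0\). The multi-valued Lipschitz approximation of \(V\) over \(M\) (combined with the no-holes estimate of the first bullet, which controls the bad set in terms of \(\boldsymbol{\eps}\) alone) is exactly the vehicle that carries this transfer through; once the \(L^2\) control is in place the elliptic regularity step is entirely standard.
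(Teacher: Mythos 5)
Your proposal follows essentially the same route as the paper on both bullets. For \eqref{verfdsavdsc1}: split $M\cap\bC_1$ into the good set $K$ of the Lipschitz approximation and its complement; on $K$ bound $d_V$ by the size of $f$ and then by $d_M$ at a nearby point of $\supp(V)$ via the area formula; on the bad set use the crude $L^\infty$ bound for $d_V$ together with the measure estimate. For \eqref{verfdsavdsc2}: first get an $L^2$ bound on $g-g''$ over a base ball, then exploit that $g-g''$ solves a linear elliptic system with smooth, uniformly controlled coefficients and invoke interior elliptic estimates to upgrade to $C^l$ bounds on a smaller ball. Both of these match the paper's proof.

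The one spot where you deviate, and where your sketch is too loose, is the passage from the hypothesis $\bigl(\tfrac{1}{\omega_m}\int_{\bC_1(x)} d_{M''}^2\,\dd\|V\|\bigr)^{1/2}\le\sigma$ to the $L^2$ bound on $g-g''$. The paper first applies Proposition \ref{prop:altezza} (the height bound) to $M''$, obtaining the \emph{pointwise} estimate $\sup_{\supp(V)\cap\bC_{1/2}(x)} d_{M''}\le C\sigma$, so that for every $w\in K$ near $x$ the chain "$(w,g(w))$ is within $C\boldsymbol{\eps}$ of $\supp(V)$, which is within $C\sigma$ of $M''$" works uniformly. You instead say "whose nearest point is in turn within $O(\sigma)$ of $M''$... after a change of variables and Chebyshev," but Chebyshev on the $\|V\|$-integral alone does not guarantee that the particular nearest point in $\supp(V)$ (or the point $w+f_i(w)$ on the good sheet) avoids the exceptional set where $d_{M''}$ is large. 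One can repair this either by explicitly pushing forward the $\mathcal{H}^m\res K$-integral of $d_{M''}(w+f_i(w))^2$ through the Lipschitz sheets and bounding it by $\int d_{M''}^2\,\dd\|V\|$, or, as the paper does, by invoking the height bound to obtain an $L^\infty$ control on $d_{M''}$ over $\supp(V)$ before comparing. As written, this link in your argument is not justified, and it is exactly the reason the paper stated Proposition \ref{prop:altezza} in the form it did.
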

\begin{remark}\label{portoavantiipotesi}
    Under the assumptions of the Decay Lemma it is also true that \eqref{vrefdsasvd}, \eqref{vrefdasvd-2}, and \eqref{vrefdvasvd-3}, holds at the smaller scale, i.e.
    \begin{align*}
    &\supp(V)\cap \bC_{1}\subset \{(x,y):|y|\le 1/100\}\, ,\\
    &Q-\textstyle{\frac{1}{2}}<\frac{\|V\|(\bC_{3/10})}{\omega_m(3/10)^m} <Q+\textstyle{\frac{1}{2}}\, ,\\ 
    &\frac{\|V\|(\bC_{1})}{\omega_m} \le Q+\textstyle{\frac{1}{2}}\,.
    \end{align*}
    Thus the only assumption which is not ``carried over at the smaller scale'' is \eqref{bvtewfadsvcasdc}. Notie however that \eqref{bvtewfadsvcasdc} is always satisfied if $Q=1$.
\end{remark}

\subsection{Elliptic estimates for the distance from a minimal surface}\label{vefdscxzcads} In this section we state our generalization of Allard's Tilt-excess inequality. A key tool are some properties of the distance function to a minimal surface. 
We will show, in particular, that, if $M$ is a $\delta$-flat minimal surface, then the function $d^2_M$ satisfies
\[
\lap_L  \tfrac 12 d_M^2(z) + \delta^2 d^2_M(z) \ge \tfrac14|L-T_{p(z)}M|^2\,,
\]
where $L$ is any $m$-dimensional subspace and $z\in \R^{m+n}$. As it is well-known, this type of elliptic inequality implies energy estimates as well as height bounds. We state these conclusions in the following two propositions.

\begin{proposition}[Tilt-excess inequality]\label{prop:caccioppoli}
There are constants $C$ and $\delta_0$ which depends only on $m$ and $n$ with the following property. Assume that:
\begin{itemize}
\item[(i)] $M$ is an $m$-dimensional smooth submanifold, $\delta$-flat in $\bC_{2}$ with $\delta \leq \delta_0$, and denote by $p\colon B_1 (0, \pi_0)+ B_1 (0, \pi_0^\perp) \to M$ be the closest-point projection onto $M$. 
\item[(ii)] $V$ is an $m$-dimensional stationary varifold in $\bC_1$, with $\supp(V)\subset B_1 (0, \pi_0)+ B_1 (0, \pi_0^\perp)$. 
\end{itemize}
Then, $|T_zV-T_{p(z)}M|^2\ge | D_V d_M|^2(z)$, and 
\begin{equation}\label{eq:caccioppoli}
    \int_{\bC_{  1/2}} |T_zV-T_{p(z)}M|^2\, \dd\|V\|(z)\le C \int_{\bC_1}d_M^2+d_M|H_M\circ p| \, \dd\|V\|\, .
\end{equation}
\end{proposition}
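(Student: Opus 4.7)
The plan is to derive the inequality from the stationarity of $V$ tested against the vector field $X=\eta^2(z-p(z))=\eta^2\nabla\tfrac12 d_M^2$, coupled with a pointwise elliptic bound for $\tfrac12 d_M^2$. The pointwise gradient estimate $|D_V d_M|^2\le |T_zV-T_{p(z)}M|^2$ comes for free from the fact that $\nabla d_M(z)$ is the unit normal $\nu(z)\perp T_{p(z)}M$ at the nearest point; hence $|D_Vd_M|^2=|P_{T_zV}\nu|^2=|(P_{T_zV}-P_{T_{p(z)}M})\nu|^2$ is dominated by the Hilbert--Schmidt plane distance.

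Before invoking stationarity, I would establish the pointwise elliptic inequality
\[
\Delta_L\,\tfrac12 d_M^2(z)\ \ge\ \tfrac14\,|L-T_{p(z)}M|^2\ -\ C\bigl(\delta^2 d_M^2(z)+d_M(z)\,|H_M(p(z))|\bigr),
\]
valid for every $m$-plane $L$ and every $z$ in the tubular neighborhood of $M$ (with $\delta\le\delta_0$). The starting point is $\nabla\tfrac12 d_M^2(z)=z-p(z)$, so $D^2\tfrac12 d_M^2(z)=\Id-Dp(z)$ and $\Delta_L\tfrac12 d_M^2=m-\operatorname{tr}_L(Dp)$. Writing $Dp(z)=P_{T_{p(z)}M}+R(z)$ and using the elementary Grassmannian identity $\operatorname{tr}_L P_{T'}=m-\tfrac12|P_L-P_{T'}|^2$ yields the main term $\tfrac12|L-T_{p(z)}M|^2$; the remainder $R$, computed in normal-bundle coordinates around $p(z)$, equals $d_M$ times the Weingarten operator of $M$ in the direction $\nu(z)$ plus $O(\delta^2 d_M^2)$. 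Taking the trace against $L$ and isolating the piece that pairs with $\nu$ produces at worst $C(d_M|H_M\circ p|+\delta^2 d_M^2)$, because the trace of the shape operator is precisely the mean-curvature contribution.

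With these pointwise bounds at hand I would pick a cutoff $\eta\in C^\infty_c\bigl(B_1(0,\pi_0)+B_1(0,\pi_0^\perp)\bigr)$ with $\eta\equiv 1$ on $\bC_{1/2}$ and $|\nabla\eta|\le C$, and use stationarity with $X=\eta^2(z-p(z))$:
\[
0=\int \operatorname{div}_V X\,\dd\|V\|=\int \eta^2\,\Delta_{T_zV}\tfrac12 d_M^2\,\dd\|V\|+\int 2\eta\,\nabla_V\eta\cdot(z-p(z))\,\dd\|V\|.
\]
Plugging $L=T_zV$ into the elliptic inequality bounds the first integrand below by $\tfrac14\eta^2|T_zV-T_{p(z)}M|^2-C\eta^2(d_M^2+d_M|H_M\circ p|)$. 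For the second integrand, the crucial observation is $z-p(z)=d_M\,\nu$ with $\nu\perp T_{p(z)}M$, so
\[
|\nabla_V\eta\cdot(z-p(z))|=d_M\,|\nabla\eta\cdot P_{T_zV}\nu|\le d_M\,|\nabla\eta|\,|T_zV-T_{p(z)}M|,
\]
and Young's inequality absorbs half of the tilt-excess back into the left-hand side, leaving $C\int|\nabla\eta|^2 d_M^2\,\dd\|V\|$ on the right and producing exactly \eqref{eq:caccioppoli}.

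The main obstacle is the derivation of the pointwise elliptic inequality with the correct quantitative error: since $M$ is only assumed $\delta$-flat (and not necessarily minimal), one must keep track of the mean-curvature contribution and verify that only $H_M$, rather than the full second fundamental form, enters the term linear in $d_M$, with an $O(\delta^2 d_M^2)$ quadratic remainder. Doing this while preserving a universal positive constant in front of the Grassmannian distance is the most delicate geometric computation, and is where the ideas from \cite{DPGS} mentioned by the authors are expected to play a role.
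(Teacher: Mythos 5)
Your strategy is essentially the same as the paper's: (1) derive a pointwise elliptic inequality for $\tfrac12 d_M^2$ from the explicit Hessian $D^2\tfrac12 d_M^2=\Id-Dp$; (2) test stationarity with $X=\eta^2\nabla\tfrac12 d_M^2$; (3) absorb the cross term $2\eta\,\nabla_V\eta\cdot(z-p(z))$ by Young's inequality using $|D_V d_M|\le |T_zV-T_{p(z)}M|$. The only real difference is how the pointwise Hessian inequality is established.

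There is a small but genuine gap in your estimate of $\operatorname{tr}_L(R)$. You write $Dp=P_{T_{p(z)}M}+R$ with $R=d_M\,\sff_\nu+O(\delta^2 d_M^2)$ and then claim that $\operatorname{tr}_L(R)$ is bounded by $C(d_M|H_M\circ p|+\delta^2 d_M^2)$ ``because the trace of the shape operator is the mean curvature.'' But $\operatorname{tr}_L$ is the trace weighted by $P_L$, not the full trace, and since $\sff_\nu$ lives on $T_{p(z)}M\times T_{p(z)}M$ one has
\[
\operatorname{tr}_L(\sff_\nu)=\operatorname{tr}(\sff_\nu)+\operatorname{tr}\bigl((P_L-P_{T_{p(z)}M})\sff_\nu\bigr)=H_M\cdot\nu+O\bigl(\delta\,|P_L-P_{T_{p(z)}M}|\bigr)\,.
\]
So $\operatorname{tr}_L(R)$ carries an additional term of order $\delta\,d_M\,|P_L-P_{T_{p(z)}M}|$, which is \emph{not} dominated by $\delta^2 d_M^2$. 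It can, however, be absorbed by Young into the main term $\tfrac12|P_L-P_{T_{p(z)}M}|^2$, at the cost of the coefficient dropping from $\tfrac12$ to $\tfrac14$ (which is in fact what you write in your proposed inequality, so you seem to have anticipated this, but the absorption step should be stated explicitly). The paper sidesteps this cross term entirely by diagonalizing the Hessian, splitting $A+B=(A+\tfrac12 B)+\tfrac12 B$ where $B=P_{T_{p(z)}M^\perp}$, and invoking the elementary trace lemma (``sum of the $m$ smallest eigenvalues'' bound) on $A+\tfrac12 B$. Both routes work; the paper's is slightly cleaner because it never produces a curvature--tilt cross term, while yours is more direct but requires the explicit absorption. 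With that one step added, your proof is correct.
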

\begin{proposition}[Height bound]\label{prop:altezza}
    Under the same assumptions of Proposition \ref{prop:caccioppoli},
        \begin{equation}\label{eq:L2Linf}
        \sup_{\supp(V)\cap \bC_{1/2}}d_M^2\le C\Big(\int_{\bC_1} d_M^2\, \dd\|V\|+\|V\|(\bC_1)\sup_{\bC_1\cap M}|H_M|^2\Big)\,,
    \end{equation}
        where $C$ depends on $m$ and $n$.
\end{proposition}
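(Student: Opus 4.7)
The plan is to combine the pointwise elliptic inequality for $d_M^2$ announced at the beginning of Section \ref{vefdscxzcads} with the stationarity of $V$ to show that $d_M^2$ is a non-negative weak subsolution on $V$, and then run a Moser iteration to pass from an $L^1$ bound to an $L^\infty$ bound.

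\medskip

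\textbf{Step 1 (subsolution property).} Using the pointwise inequality
\[
\Delta_L \tfrac12 d_M^2(z) + \delta^2 d_M^2(z) + C d_M(z)|H_M(p(z))| \;\ge\; \tfrac14 |L - T_{p(z)}M|^2
\]
at $L = T_z V$ for $\|V\|$-a.e.\ $z$, and integrating against a non-negative $\varphi \in C^1_c(\bC_1)$, we invoke the stationarity identity $\int \Div_V X \, d\|V\| = 0$ with $X = \varphi \nabla(d_M^2/2)$. Combined with Young's inequality $d_M|H_M \circ p| \le \tfrac12 d_M^2 + \tfrac12|H_M\circ p|^2$, we deduce
\[
\int \nabla_V d_M^2 \cdot \nabla_V \varphi \, d\|V\| \;\le\; C \int \varphi \bigl( d_M^2 + \sup_{M \cap \bC_1}|H_M|^2 \bigr)\, d\|V\|\,.
\]
In other words, $u := d_M^2$ satisfies $\Delta_V u \ge -Cu - Cg$ weakly on $V$, with constant source $g = \sup_{M\cap\bC_1}|H_M|^2$.

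\medskip

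\textbf{Step 2 (Moser iteration).} Given the subsolution property above, a standard Moser iteration on the stationary varifold $V$ --- whose cornerstone is the Michael--Simon Sobolev inequality, cf.\ \cite[\S 18]{Simon} --- yields, with $C$ depending only on $m$ and $n$,
\[
\sup_{\supp V \cap \bC_{1/2}} u \;\le\; C \int_{\bC_1} u \, d\|V\| + C\|V\|(\bC_1) \sup_{M\cap\bC_1}|H_M|^2\,.
\]
Substituting $u = d_M^2$ gives precisely \eqref{eq:L2Linf}.

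\medskip

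\textbf{Main obstacle.} The genuine technical input is the pointwise elliptic inequality used in Step 1, which will be proved in Section \ref{vefdscxzcads}: it rests on Taylor-expanding $\tfrac12 d_M^2(z) = \tfrac12 |z - p(z)|^2$, computing $\nabla^2(d_M^2/2)$ via the implicit definition of $p$, and using the $\delta$-flatness of $M$ to bound its second fundamental form, which controls the error in the identification of $\nabla^2(d_M^2/2)$ with the projection onto $T_{p(z)}M^\perp$. Once that inequality is in hand, Step 2 is essentially routine; the only care needed is to track that the constant source $\sup|H_M|^2$, integrated against the measure $\|V\|$, produces the factor $\|V\|(\bC_1)$ on the right-hand side of \eqref{eq:L2Linf}.
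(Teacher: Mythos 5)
Your proof is correct, but it takes a genuinely different route from the paper's. Step 1 (the weak subsolution inequality for $u = d_M^2$, obtained by testing stationarity with $X = \varphi\,D(d_M^2/2)$ and invoking the pointwise Hessian bound from Lemma~\ref{hessdist}) is precisely what the paper also exploits; the divergence is in Step 2. Where you propose Moser iteration via the Michael--Simon Sobolev inequality, the paper instead runs a monotonicity-formula argument following \cite{DPGS}: it tests stationarity with $X = \tfrac{d^2}{2}\,D g_{r,s} - g_{r,s}\,D\tfrac{d^2}{2}$ (with $g_{r,s}$ the kernel used in the proof of the monotonicity formula), derives a differential inequality for $I(r) = \tfrac{1}{\omega_m r^m}\int_{\B_r}\tfrac{d^2}{2}\,d\|V\|$, and concludes by combining the near-monotonicity of $I(r)e^{Cr}$ (plus the inhomogeneous term) with the fact that $I(r)\to\Theta(V,0)\tfrac{1}{2}d^2(0)$ and $\Theta\ge 1$ on $\supp V$. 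The paper's route is shorter, avoids the machinery of the Sobolev inequality and of the iteration, and yields the constant directly from the density lower bound; your route is more machinery-heavy but more robust, and in particular would accommodate a right-hand side that is not a constant. One detail worth flagging if you flesh out Step~2: the passage from an $L^2\to L^\infty$ Moser estimate to the stated $L^1\to L^\infty$ bound needs either the standard iteration starting from exponents $p\in(0,1)$ for nonnegative subsolutions, or the interpolation/covering trick, and you should make explicit that the constant in the Michael--Simon inequality (hence in the iteration) is universal in $m,n$ and independent of $\|V\|(\bC_1)$, which is what lets the factor $\|V\|(\bC_1)$ appear only in front of the inhomogeneous term.
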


\begin{remark}
    Testing the stationarity of $V$ with the vector field $\zeta^2|z|^{2-m}  D(d^2)-d^2 D(|z|^{2-m}\zeta^2)$, one can actually replace the left-hand side of \eqref{eq:caccioppoli} with
    \[
    \int_{\bC_{  1/2}} |z|^{2-m}\, |T_zV-T_{p(z)}M|^2\, \dd\|V\|(z)\,.
    \]
\end{remark}

\subsection{Lipschitz approximation on the normal bundle} In this section we state the Lipschitz approximation theorem for stationary varifolds on the normal bundle of sufficiently flat $C^2$ submanifolds of the same dimension. 
From now on we will use the notation $B^M_r$ for the geodesic ball of $M$ with radius $r$ centered at $o\defeq M\cap(\{0\}\times \R^n)$.

  \begin{thm}[Lipschitz approximation]\label{Lipapprox} For any triple of integers $m,n,Q\geq 1$ there are constants $\varepsilon_0$, $\delta_0$, and $C$ with the following properties. Assume that
  \begin{itemize}
      \item[(i)]  $M$ is a smooth $m$-dimensional submanifold, $\delta$-flat in $\bC_{10}$, with $\delta \leq \delta_0$;
      \item[(ii)] $V$ is an $m$-dimensional stationary varifold in $\bC_5$, with 
      \begin{align}
      & \frac{\|V\|(\bC_5)}{\omega_m 5^m}\le Q+1\, ,\\
      &\supp V\subset\{(x,y):|x|<5,|y|<1\}\, ,\\ 
      &\bE\defeq \frac{1}{\omega_m 5^m}\int_{\bC_{5}} |T_x V-T_{p_M(x)}M|^2 \, \dd\|V\|(x) \leq \varepsilon_0\, , \quad \mbox{and}\\
	   &\frac{\|V\|(\bC_{3})}{\omega_m 3^m}  \in(Q-\textstyle{\frac{1}{2}},Q+\textstyle{\frac{1}{2}})\,.\label{vfeadsc}
      \end{align}
  \end{itemize}
  Then, for any given $\lambda \in (0,1)$,  there exist a $Q$-valued  Lipschitz function $f: B_1^M\rightarrow\Iq(\R^{m+n})$ and a Borel set $K\subset B_1^M$ such that 
  \begin{itemize}
      \item[(a)] $f(x)\in \Iq(T_x ^\perp M)$ for every $x\in B_1^M$;
      \item[(b)] The following estimates hold:
    \begin{align}
    &\Lip (f) \le C(\lambda+\delta^2)^{\sfrac{1}{2m}}\label{Lipdcsa}\,, \\
	&|B_1^M\setminus K|+\|V\|\big(p_M^{-1}(B_1^M\setminus K)\big)\le \frac{C}{\lambda}\bE\, , \label{measureestimate}\\
    &\sup_{x\in B_1^M}\cG(f(x),Q\a{0})\le C \sup_{x\in p^{-1}_M(B_1^M)} d_M(x)\, \label{cdsacads};
    \end{align}
    \item[(c)] for every $x\in K$, if $f(x)=\sum_h Q_h\a{p_h}$, then $\supp(V)\cap p^{-1}_M(x) = \bigcup_h \{x+p_h\}$; 
    \item[(d)] for every $x\in K$ and every $z\in p^{-1}_M(x)\cap\supp(V)$
    we have 
    \begin{equation}\label{densdvacs}
		\Theta(z)=\sum_{h:p_h=y}Q_h\,.
	\end{equation}
    \end{itemize}
\end{thm}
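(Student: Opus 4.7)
The plan is to adapt the Almgren-type Lipschitz approximation that the first two authors gave in \cite{CCSvarifolds}, replacing the reference plane $\pi_0$ with the curved submanifold $M$ and the orthogonal projection with the nearest-point projection $p_M$, which, thanks to the $\delta$-flatness of $M$, is well-defined and smooth on the relevant cylinder provided $\delta_0$ is small. For $x\in B_1^M$ and $r\in(0,r_0)$ (with $r_0$ a small absolute constant) I would define the localized tilt excess
$$
\bE(x,r)\defeq \frac{1}{r^m}\int_{p_M^{-1}(B_r^M(x))}|T_zV-T_{p_M(z)}M|^2\,\dd\|V\|(z),
$$
and let $K$ be the set of $x\in B_1^M$ such that $\bE(x,r)\le\lambda$ for every $r\in(0,r_0]$ and the density ratio $r^{-m}\|V\|(p_M^{-1}(B_r^M(x)))$ lies in $\omega_m(Q-\tfrac12,Q+\tfrac12)$ for every such $r$. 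A Vitali-type maximal function argument, together with \eqref{vfeadsc} to force the wrong-density bad set to be small, then yields \eqref{measureestimate}.

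On $K$ the idea is to show that $V$ decomposes into $Q$ sheets. Applying Allard's monotonicity to vector fields of the form $\chi(|z-q|)(z-q)$ anchored at $q\in \supp(V)\cap p_M^{-1}(x)$, and combining this with the tilt-excess inequality of Proposition \ref{prop:caccioppoli} and the height bound of Proposition \ref{prop:altezza} (both contributing only errors of order $\delta^2$ coming from the curvature of $M$), one obtains that each such $q$ is a density point with integer multiplicity and that the total multiplicity equals $Q$. This furnishes the decomposition $\supp(V)\cap p_M^{-1}(x)=\bigcup_h\{x+p_h\}$ with $p_h\in T_x^\perp M$ and the identity \eqref{densdvacs}, so that $f(x)\defeq \sum_h Q_h\a{p_h}$ defines an $\Iq(T_x^\perp M)$-valued map on $K$, proving (c), (d), and (a) on $K$.

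For the Lipschitz bound on $f|_K$, for $x,y\in K$ with $d_M(x,y)$ small I would compare the $Q$-tuples $f(x)$ and $f(y)$ via the $L^2$-to-$L^\infty$ interpolation of Proposition \ref{prop:altezza} applied on overlapping cylinders of radius $r\simeq d_M(x,y)^\alpha$ and optimize in $\alpha$: this yields the exponent $1/(2m)$ typical of Almgren's approximation, while the additive $\delta^2$ reflects the curvature correction in the sub-elliptic identity $\lap_L \tfrac12 d_M^2 + \delta^2 d_M^2 \ge \tfrac14 |L-T_{p(z)}M|^2$ recalled before Proposition \ref{prop:caccioppoli}. The height bound \eqref{cdsacads} follows directly from Proposition \ref{prop:altezza}. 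Finally, $f$ is extended from $K$ to all of $B_1^M$ using Almgren's Lipschitz extension for $\Iq(\R^{m+n})$-valued maps, followed by fiberwise projection $P\mapsto \sum_h \a{\pi_{T_x^\perp M}(p_h)}$, which is $1$-Lipschitz on each fiber and, as $x$ varies, introduces an error controlled by $\delta\cdot\diam(P)\le C\delta\sup d_M$, compatible with \eqref{Lipdcsa}; on $K$ the projection is the identity, so (a) is preserved.

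The main obstacle will be the structure theorem on the good set: in the flat case the monotonicity of $r^{-m}\|V\|(\bC_r)$ is exact, whereas here it is only almost-monotone with a defect of order $\delta^2$, and one must ensure this defect is dominated by the excess uniformly over $K$ while simultaneously handling the $Q$-sheeted splitting. A further delicate point is that the fiberwise projection step in the extension must not spoil the $(\lambda+\delta^2)^{1/(2m)}$ Lipschitz bound; this uses crucially that the normal bundle of $M$ varies slowly on account of the $\delta$-flatness (the second fundamental form is bounded by $\delta$), so that the projection onto $T_x^\perp M$ is itself a slowly varying linear map of $x$.
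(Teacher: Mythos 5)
Your overall strategy — define a bad set via a maximal function, deduce a $Q$-sheeted decomposition on the complementary good set $K$, derive the Lipschitz bound on $K$, and extend — matches the paper's. However, there is a genuine gap in the central tool you invoke. You repeatedly appeal to Proposition \ref{prop:altezza} as the source of the ``$Q$-sheeted splitting'' and of the exponent $1/(2m)$. Proposition \ref{prop:altezza} is a sup-bound of $d_M$ on $\supp V$ by $\int d_M^2\,\dd\|V\|$: it says nothing about how $\supp V$ fibers into $Q$ separated slabs in the normal direction to $M$. The structural input actually needed — and the one the paper uses — is Theorem \ref{height} (the other ``height bound,'' imported from \cite{CCSvarifolds}): given small tilt excess and density ratio at most $Q+\tfrac12$, the support is covered by $Q$ horizontal strips of width $C\bE^{1/(2m)}$. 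That stratification is where the exponent $1/(2m)$ comes from; it is not obtainable by an $L^2$-to-$L^\infty$ interpolation of $d_M$ nor by ``optimizing $\alpha$'' in the radius $r\simeq d_M(x,y)^\alpha$. Without Theorem \ref{height} (applied, after a small reparametrization and the geometric comparison \eqref{fdvsacasdca}, to tilted cylinders $\bC_s(y,T_yM)$), you cannot define the $Q$-tuple $f(x)$ via the limiting slabs $S_h^{x,s}$ as $s\downarrow 0$, you cannot match sheets at $x_1$ and $x_2$ to prove the Lipschitz estimate, and you cannot read off \eqref{densdvacs}. The monotonicity/tilt-excess combination you sketch for the sheet count is likewise too weak: it needs the slab separation as an a priori input to isolate the sheets before density ratios can be counted.

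Two smaller inaccuracies worth flagging. First, \eqref{cdsacads} is not a consequence of Proposition \ref{prop:altezza}: it is immediate from the construction, since each $p_h^x$ lies on $\supp V\cap (x+T_x^\perp M)$, whence $|p_h^x|=d_M(x+p_h^x)\le\sup_{p_M^{-1}(B_1^M)}d_M$. Second, in defining $K$ you impose both a smallness condition on the local tilt excess and a density-ratio condition, claiming the latter's exceptional set is small via \eqref{vfeadsc}; the paper instead shows (Step 1, via Lemma \ref{vfedacssc} together with \eqref{vfeadsc}) that the density-ratio pinching is a \emph{consequence} of the excess smallness, so only one bad set needs to be estimated. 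Your version can likely be salvaged, but it complicates the covering argument for \eqref{measureestimate} without benefit, and if you instead define $K$ with a centered maximal function you will also need to be careful that the $1$-variable Vitali covering still applies to the pushforward measure $(p_M)_*\big(|T_z V - T_{p_M(z)}M|^2\,\dd\|V\|\big)$ on $M$.
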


\section{Preliminaries}

In this section we collect some preliminary notions and we state some results which are going to be useful throughout the paper. Their proofs will be deferred to the Appendix. 

\subsection{The geometric set-up}\label{vfsadcasddsvcca}

From now on we will always assume that $M$ is an $m$-dimensional submanifold (of class at least $C^2$), which is $\delta_0$-flat in $\bC_{2}$. 
We will moreover fix an orthonormal frame $\{e_i(x)\}_{i=1,\ldots,m+n}$, with $C^1$ dependence on $x\in M$, where the first $m$ vectors are tangent to $M$ and the last $n$ vectors are normal to $M$. Fixing such a choice is convenient even though all the quantities which we define will in fact be independent of it.

The second fundamental form of $M$ at $x$ is the bilinear map $\sff \colon T_xM\times T_xM\to T_xM^\perp$ defined by $\sff(e,e')=(D_e e')^\perp$. Moreover, if $\nu (x)$ is a vector normal to $T_x M$, we define
\begin{equation*}
    \sff_\nu(e,e')\defeq (D_e e')\cdot \nu =-e'\cdot D_e\nu=-e\cdot D_{e'}\nu\qquad \text{for }e,e'\in T_x M\text{ and } \nu\in T_x^\perp\,,
\end{equation*}
where, for the second equality, we fix any extension of $\nu(x)$ to a neighborhood of $x$ as normal vector field.
Recall that $\sff$ is a tensor in $e,e'$ and $\nu$, as it is apparent from the equalities above. The mean curvature vector $H$ (or $H_M$) is the trace of the second fundamental form $\sff$ and is the only vector normal to $M$ such that
\[
H\cdot \nu = \sum_{i=1}^m \sff_\nu(e_i,e_i)\qquad   \text{for every } \nu\in T_x^\perp M\,. 
\]
Throughout the paper, without further mention, we will assume  that the flatness parameter $\delta$ is smaller than a positive constant $\delta_0$, chosen in terms of $m$ and $n$ only. The choice must guarantee that:
\begin{itemize}
    \item {the affine planes $x+T_x^\perp M$ do not intersect in $B^m_2\times B^n_{200}$ for distinct $x$'s;}
    \item any point $z\in B^m_{1.99}\times B^n_{200}$ has a unique closest point projection $p(z)\in M\cap \bC_{2}$;
    \item the map $p$ (often denoted $p_M$) is $C^1$ (and in fact $C^k$ if $M$ is of class $C^{k+1}$ for $k\in \mathbb N\setminus \{0\} \cup \{\infty, \omega\}$).
\end{itemize}
It is easy to see that an homothetic rescaling around a point of $M$ by a factor $\lambda\geq 1$ preserves the latter properties for the intersection of the rescaled $M$ with $\bC_2 (x)$.

It is useful to compute the differential of $p$ at a point $z$ (which does not necessarily belong to $M$).
\begin{lem}\label{lem:matrixp}
    Splitting $\R^{n+m}=T_{p(z)}M \times T_{p(z)}M^\perp,$ the Jacobi matrix of $Dp(z)$ can be written in block form as
    \[
    Dp(z)=\left[ 
    \begin{array}{c|c} 
    \big(\Id_m - \sff_{z-p(z)}\big)^{-1} & 0 \\ 
    \hline 
    0 & 0 
    \end{array} 
    \right].
    \]
    In particular,
    \[
    Dp(z)=\sum_{i,j=1}^m \big(\delta_{ij}+\sff_{z-p(z)}(e_i,e_j)\big) e_i\otimes e_j+ O(\delta^2 |z-p(z)|^2)\,.
    \]
\end{lem}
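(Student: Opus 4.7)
The plan is to differentiate the defining relation of the closest-point projection, namely $z - p(z) \in T_{p(z)}^\perp M$. Working in the local tangent frame $\{e_i\}_{i=1}^m$ for $M$, this orthogonality is equivalent to the $m$ scalar identities $(z - p(z)) \cdot e_i(p(z)) = 0$. I will differentiate each of these in an arbitrary direction $v \in \R^{m+n}$, split $v = v^T + v^\perp$ according to $\R^{m+n} = T_{p(z)} M \oplus T_{p(z)}^\perp M$, and read off $Dp(z)[v]$ from the resulting identity.

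The normal block is immediate. If $v \in T_{p(z)}^\perp M$, then for $|t|$ small the point $z + tv$ still lies on the affine normal $p(z) + T_{p(z)}^\perp M$ and hence inside the tubular neighborhood. The uniqueness of the closest-point projection (guaranteed by $\delta \leq \delta_0$, cf.\ Subsection \ref{vfsadcasddsvcca}) then forces $p(z + tv) = p(z)$, so $Dp(z)[v] = 0$. For the tangent block, set $w := Dp(z)[v]$, which automatically lies in $T_{p(z)}M$ since $p$ maps into $M$. Differentiating $(z - p(z)) \cdot e_i(p(z)) = 0$ in direction $v$ yields
\[
(v - w) \cdot e_i + (z - p(z)) \cdot D_w e_i = 0.
\]
Because $z - p(z) \perp T_{p(z)}M$, only the normal part of $D_w e_i$ contributes in the second term, and that normal part is $\sff(w, e_i)$ by the definition recalled in Subsection \ref{vfsadcasddsvcca}. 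The equation becomes
\[
v^T \cdot e_i - w \cdot e_i + \sff_{z - p(z)}(w, e_i) = 0,
\]
which, read in components in the basis $\{e_i\}$ and using the symmetry of $\sff$, is exactly the linear system $(\Id_m - \sff_{z - p(z)})\, w = v^T$. Inverting gives the tangent block $(\Id_m - \sff_{z - p(z)})^{-1}$, and combined with the vanishing normal block this assembles into the stated block-matrix formula.

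For the ``in particular'' statement I would expand the inverse via its Neumann series. Under the $\delta$-flatness hypothesis of Definition \ref{e:delta-flatness}, the second fundamental form of $M$ obeys $|\sff| \leq C\delta$ (via $|D^2 g| \leq \delta$), so the operator norm of $\sff_{z - p(z)}$ on $T_{p(z)}M$ is at most $C\delta|z - p(z)| \ll 1$ throughout the tubular neighborhood. Consequently
\[
(\Id_m - \sff_{z - p(z)})^{-1} = \Id_m + \sff_{z - p(z)} + O(\delta^2 |z - p(z)|^2),
\]
which yields the coordinate expansion of the lemma. The only real subtlety in the proof is the careful tangent/normal bookkeeping when applying the chain rule to a moving frame; beyond that the computation is algebraic and no substantive obstacle appears.
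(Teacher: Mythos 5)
Your proof is correct and takes a genuinely different, more direct route than the paper's. The paper builds a chart $\Psi$ for the normal bundle $\nu M$, computes the differential of the ``fold-out'' map $F\circ\Psi$ explicitly in block form, and then extracts $Dp$ by noting that $p\circ F$ is the base projection and differentiating the identity $p(F\circ\Psi(x,b))=\varphi(x)$. You instead differentiate the orthogonality relation $(z-p(z))\cdot e_i(p(z))=0$ directly in the ambient space, together with the observation (from $p$ mapping into $M$) that $Dp(z)[v]\in T_{p(z)}M$. Your single displayed identity $v^T\cdot e_i - w\cdot e_i + \sff_{z-p(z)}(w,e_i)=0$ already determines the full block matrix, since it characterizes $w=Dp(z)[v]\in T_{p(z)}M$ for \emph{arbitrary} $v$ (tangent or normal); so your separate normal-block argument via $p(z+tv)=p(z)$ is a nice sanity check but is not strictly needed. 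Both routes give the same formula, and the Neumann-series expansion $(\Id_m-\sff_{z-p(z)})^{-1}=\Id_m+\sff_{z-p(z)}+O(\delta^2|z-p(z)|^2)$ is the right way to obtain the ``in particular'' clause, which the paper also does not detail. Your approach avoids setting up a chart for $\nu M$ and is therefore shorter; the paper's chart-based computation is a bit heavier but produces the intermediate block matrix for $D(F\circ\Psi)$, which it reuses in the proof of Lemma~\ref{hessdist} when computing $D^2 d_M^2$ via $D(z-p(z))=\Id-Dp(z)$. One small point worth making explicit if you wrote this up: the orthogonality identity holds for all points in the tubular neighborhood (not just at the given $z$), which is what justifies differentiating it, and this relies on the $C^1$ regularity of $p$ established under the $\delta_0$-flatness hypothesis.
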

Second, we recall that $d_M^2$ is $C^k$ if $M$ is of class $C^{k+1}$ for $k\geq \mathbb N\setminus \{0\} \cup \{\infty, \omega\}$) and (when the manifold is $C^3$) its Hessian contains information on the curvature of $M$. 
\begin{lem}\label{hessdist} Assume that the frame $e_1,\dots,e_m$ at $p(z),$ diagonalizes the quadratic form $\sff_{{z-p(z)}/|{z-p(z)}|}$, and let $\kappa_1,\dots,\kappa_m$ be the respective eigenvalues. In particular, $\sum_{j=1}^m \kappa_j=H(p(z))\cdot \frac{z-p(z)}{|z-p(z)|}$. Then, with respect to this basis, \footnote{\, The fact that $\kappa_j$ are the eigenvalues of $\sff_{z-p(z)/|{z-p(z)}|}$ was conjectured, but not proved, in \cite[Remark 3.3]{ASlevel}. 
}
\begin{equation*}
    \tfrac12 D^2 d^2_M (z) = -{\sum_{j=1}^m \frac{d_M(z)\kappa_j(z)}{1-\kappa_j(z) d_M(z)} e_j\otimes e_j}+ {  \sum_{i={m+1}}^{m+n} e_i\otimes e_i}\,.
\end{equation*}
    
\end{lem}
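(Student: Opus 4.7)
\textbf{Proof plan for Lemma \ref{hessdist}.} The strategy is to differentiate the identity $\tfrac12 d_M^2(z) = \tfrac12 |z - p(z)|^2$ twice, reducing the computation of $D^2 d_M^2$ to that of $Dp(z)$, and then to invoke Lemma \ref{lem:matrixp}.

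\emph{Step 1: first derivative.} By the envelope-type computation, for $v \in \R^{m+n}$,
\begin{equation*}
D(\tfrac12 d_M^2)(z)\cdot v = (z-p(z))\cdot v - (z-p(z))\cdot Dp(z)v\,.
\end{equation*}
Since $Dp(z)v \in T_{p(z)}M$ and $z-p(z) \in T_{p(z)}^\perp M$ (the defining property of the closest-point projection), the second term vanishes. Hence $\nabla(\tfrac12 d_M^2)(z) = z - p(z)$.

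\emph{Step 2: second derivative and application of Lemma \ref{lem:matrixp}.} Differentiating once more yields
\begin{equation*}
D^2(\tfrac12 d_M^2)(z) = \Id_{m+n} - Dp(z)\,.
\end{equation*}
In the orthogonal splitting $\R^{m+n}=T_{p(z)}M\oplus T_{p(z)}^\perp M$, Lemma \ref{lem:matrixp} gives
\begin{equation*}
Dp(z) = \left[\begin{array}{c|c} (\Id_m - \sff_{z-p(z)})^{-1} & 0 \\ \hline 0 & 0 \end{array}\right]\,.
\end{equation*}
Therefore the normal block of $\Id - Dp(z)$ is the identity $\Id_n$, which is exactly the term $\sum_{i=m+1}^{m+n} e_i \otimes e_i$ in the statement.

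\emph{Step 3: diagonalization of the tangent block.} Write $z-p(z) = d_M(z)\,\nu$ with $\nu$ the unit normal, so that $\sff_{z-p(z)} = d_M(z)\,\sff_\nu$. In the frame $e_1,\dots,e_m$ that diagonalizes $\sff_\nu$ with eigenvalues $\kappa_1,\dots,\kappa_m$, the operator $\Id_m - \sff_{z-p(z)}$ is diagonal with entries $1 - d_M(z)\kappa_j$, so $(\Id_m - \sff_{z-p(z)})^{-1}$ has diagonal entries $1/(1-d_M(z)\kappa_j)$ (the $\delta_0$-flatness assumption guarantees $|d_M \kappa_j| < 1$, so these inverses exist). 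Subtracting from $1$:
\begin{equation*}
1 - \frac{1}{1-d_M(z)\kappa_j} = -\frac{d_M(z)\kappa_j}{1-d_M(z)\kappa_j}\,,
\end{equation*}
which gives the claimed tangent part. The identity $\sum_j \kappa_j = H(p(z))\cdot \nu$ is just the definition of the mean curvature as the trace of $\sff$.

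\emph{Where the real work lies.} The computation itself is routine given Lemma \ref{lem:matrixp}; the substantive content is already contained in that lemma (which in turn uses the identity $p(z) + (\Id_m - \sff_{z-p(z)}) Dp(z) v = v$ obtained by differentiating $z = p(z) + (z-p(z))$ along $M$-tangent and $M$-normal directions). So the only potential pitfall in writing up the present lemma is being precise about the two separate reasons why $z - p(z) \perp T_{p(z)}M$ kills cross terms: once in Step 1 (to get the clean gradient) and implicitly in the block structure of Lemma \ref{lem:matrixp}.
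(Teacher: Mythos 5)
Your proof is correct and follows essentially the same route as the paper: differentiate $\tfrac12 d_M^2(z) = \tfrac12|z-p(z)|^2$ to get $\nabla(\tfrac12 d_M^2) = z - p(z)$, differentiate once more to get $\Id_{m+n} - Dp(z)$, and then read off the block structure from Lemma~\ref{lem:matrixp} in the diagonalizing frame. Your Step 1 is slightly more explicit than the paper about the cancellation of $(z-p(z))\cdot Dp(z)v$ via orthogonality (the paper just invokes the formula for $Dp$), but this is the same argument.
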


\subsection{Sections of the normal bundle of $M$}
As we wish to approximate the varifold $V$ with a graph over $M$,  it is natural to recall the notion of section of the normal bundle of $M$. A ``normal section of $M$'' is simply a map $f\colon U\to \R^{m+n}$ such that $f(x)\in T_xM^\perp$ for all $x\in U$, and $U\subset M$ is some open set. We will then call the set
\[
\Gamma_f\defeq\{x+f(x) : x\in U\}
\]
the ``graph of $f$''.
Whenever $e\in T_xM$ we define $D_ef$ as
\[
D_ef(x)\defeq (D_e f_1(x),\ldots, D_e f_{m+n}(x)) \in \R^{n+m}\,,
\]
and 
\[
|D f(x)|^2\defeq \sum_{i=1}^m\sum_{j=1}^{n+m} |D_{e_i} f_j(x)|^2\,.
\]
While sections form a linear space, we warn the reader that $D_e f (x),$ will not be necessarily a normal section, just a map from $M$ to $\R^{m+n}$.

With these definitions, we can naturally define function spaces of sections of $M$ of various regularity and integrability, such as $C^{\alpha}$, $L^p$, $W^{1,p}$, and so on. For instance the $C^\alpha(U,\nu M)$ will denote the set of functions $f\colon U\to \R^{n+m}$ which are H\"older continuous with exponent $\alpha$ in $U\subset M$ and whose value at $x$ perpendicular to $T_xM$. More intrinsic definitions are possible, but for our purposes, since $M$ is a perturbation of an $m$-dimensional plane, they are all equivalent.


\subsection{The Jacobi operator}\label{jacop}
The ``stability operator of $M$'' or ``Jacobi operator of $M$'' is the linear differential operator on $M$ obtained by linearizing the minimal surface system over the normal bundle of a fixed minimal surface $M$, which maps normal sections of $M$ to normal sections of $M$. The operator can be written as 
\begin{equation*}
    \LL_M f = (\lap_M f)^\perp + 2 \sff_f:\sff\,,
\end{equation*}
where, if $f=(f_1,\ldots,f_{n+m})$, $\lap_M f$ is defined as\footnote{\, Using the intrinsic Laplacian of $\nu M$, usually denoted as $\lap ^\perp$, one has instead the expression $\LL_M f= \lap^\perp f+\sff_f\colon \sff,$ which often appears in literature, see e.g.\ \cite{Lawson}. More precisely, $\lap^\perp f $ is defined in such a way that
\[
\int_M f\cdot \lap^\perp \varphi = -\int_M (D f)^\perp \cdot (D \varphi)^\perp\qquad \text{for all } f\varphi\in C^1_c(M,\nu M)\,.
\]
So, in general, $\lap^\perp f$ is
\textit{different} from $(\lap_M f)^\perp$, which explains the factor 2 in \eqref{eq:defL_M}.}
\begin{equation}\label{eq:defL_M}
    (\lap_Mf )^\perp(x)= P_{T_x^\perp M}(\lap_M f_1(x),\ldots,\lap_M f_{n+m}(x)) \in T_xM^\perp\qquad \text{for }x\in M\, ,
\end{equation}
and $\sff_f:\sff\,$ is the normal section given by
\[
\sff_f:\sff (x)\defeq\sum_{i,j=1}^m \sff_f(e_i,e_j) \sff(e_i,e_j)\, .
\]
All these expressions do not depend on the choice of the orthonormal frame.

We can give a weak form of the stability operator through the bilinear operator
\begin{equation*}
    -\langle \LL_M f,g\rangle = \int_M  \big( D f :D g - 2\sff_f:\sff_g \big) = \int_M \sum_{i=1}^m \Big(\sum_{j=1}^{m+n} D_{e_i} f_j\cdot D_{e_i} g_j  - 2\sff_f:\sff_g\Big)\, ,
\end{equation*}
which is defined for $f,g \in H^1_0(B^M_1,\nu M)$.
From a general standpoint, $\LL_M$ is a strongly elliptic operator with smooth coefficients and enjoys regularity properties analogous to the usual $\lap_M$. We will need some results from the theory of linear elliptic systems.

\begin{lem}[$W^{2,p}$ bounds]\label{lem:W2pbounds}
Given a section $f\in L^\infty(B^M_1,\nu M)$, there exists a unique section $ u\in H^1_0(B^M_1,\nu M)$ such that
\begin{equation*}
    \int_M \big( D u:D\varphi  - 2\sff_u:\sff_\varphi\big) =\int_M f\cdot \varphi\qquad\text{for all }\varphi\in C^1_c(B^M_1,\nu M)\,.
    \end{equation*}
    Furthermore, $u\in C^1(B^M_1,\nu M)$ and 
    \[
    \|u\|_{C^1(B^M_1)} \le C \|f\|_{L^\infty(B^M_1)}\,,
    \]
    where $C$ depends only on $m,$ $n$ and $\delta_0$.
\end{lem}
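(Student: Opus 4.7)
The plan is to obtain existence and uniqueness via Lax--Milgram, and then to upgrade to $C^1$ regularity through classical elliptic theory for linear second order systems with smooth coefficients. The crucial point is that $M$ is $\delta_0$-flat with $\delta_0$ small, so $\LL_M$ is a strongly elliptic small perturbation of the Laplacian on a nearly flat disc.

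The first step is to verify that $H^1_0(B^M_1,\nu M)$ is a closed subspace of $H^1_0(B^M_1,\R^{m+n})$, which follows because the orthogonal projection $P_{T_x^\perp M}$ depends smoothly on $x$ and so the pointwise constraint $u(x)\in T_x^\perp M$ is stable under $L^2$ limits. On this Hilbert space one considers the bilinear form
\[
a(u,\varphi)\defeq \int_M \big(Du:D\varphi - 2\sff_u:\sff_\varphi\big)
\]
and the linear functional $F(\varphi)\defeq \int_M f\cdot \varphi$. The pointwise bound $|\sff|\le C\delta_0$ makes $a$ continuous, and the Poincar\'e inequality on the geodesic ball $B^M_1$ (which, since $M$ is $\delta_0$-flat, is uniformly bilipschitz to $B_1\subset \R^m$) together with the pointwise bound $|2\sff_u:\sff_u|\le C\delta_0^2 |u|^2$ gives
\[
a(u,u)\ge \int_M |Du|^2 - CC_P\delta_0^2 \int_M |Du|^2 \ge \tfrac12 \int_M |Du|^2
\]
provided $\delta_0$ is small enough. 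Similarly $|F(\varphi)|\le C\|f\|_{L^\infty}\|\varphi\|_{H^1_0}$ by Cauchy--Schwarz and Poincar\'e. Lax--Milgram then produces a unique $u\in H^1_0(B^M_1,\nu M)$ satisfying $a(u,\varphi)=F(\varphi)$ for every $\varphi\in H^1_0(B^M_1,\nu M)$, and in particular for every $\varphi\in C^1_c(B^M_1,\nu M)$.

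For the regularity step, I would parametrize $M\cap \bC_2$ as the graph of the smooth map $g$, so that $B^M_1$ corresponds to a smooth domain $\Omega\subset B_1\subset \R^m$ with $C^\infty$ boundary, and pick a smooth orthonormal frame $e_{m+1},\ldots, e_{m+n}$ of $\nu M$, writing $u=\sum_\alpha u^\alpha e_\alpha$ and $f=\sum_\alpha f^\alpha e_\alpha$. A direct computation (analogous to the one behind the intrinsic formula for $\LL_M$ in Section \ref{jacop}) rewrites the equation $\LL_M u = f$ as a linear second order elliptic system for the vector $(u^{m+1},\ldots,u^{m+n})$ on $\Omega$, of the form
\[
-\de_i\big(a^{\alpha\beta}_{ij}\de_j u^\beta\big) + b^{\alpha\beta}_i \de_i u^\beta + c^{\alpha\beta} u^\beta = f^\alpha\,,
\]
with coefficients in $C^\infty(\overline\Omega)$ (depending only on $g$ and $\sff$, hence bounded in terms of $\delta_0$) and with principal part $a^{\alpha\beta}_{ij}$ within $O(\delta_0^2)$ of $\delta^{\alpha\beta}\delta_{ij}$. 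Strong ellipticity with constant $\ge 1/2$ is therefore guaranteed for $\delta_0$ small. Since $f^\alpha\in L^\infty(\Omega)$ and the Dirichlet datum vanishes on the $C^\infty$ boundary $\de\Omega$, classical Calder\'on--Zygmund/Agmon--Douglis--Nirenberg theory yields $u^\alpha\in W^{2,p}(\Omega)$ for every finite $p$, with $\|u^\alpha\|_{W^{2,p}(\Omega)}\le C_p\|f\|_{L^\infty}$. Choosing any $p>m$ and applying Morrey's embedding then gives $u\in C^{1}(B^M_1,\nu M)$ with the stated estimate $\|u\|_{C^1(B^M_1)}\le C\|f\|_{L^\infty(B^M_1)}$.

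The main obstacle is controlling the indefinite zeroth order contribution $-2\sff_u:\sff_u$ in $a(u,u)$; the flatness assumption $\delta\le \delta_0$ with $\delta_0$ small is exactly what is needed to absorb this term via Poincar\'e and obtain coercivity. Once coercivity is established, the remaining arguments are a routine application of elliptic regularity for linear systems with smooth coefficients on a smooth domain.
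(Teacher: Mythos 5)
Your proposal is correct and follows essentially the same route as the paper: coercivity of the symmetric bilinear form for $\delta_0$ small (the paper invokes Riesz representation, you invoke Lax--Milgram, which amounts to the same thing here), followed by $W^{2,p}$ estimates up to the smooth boundary and Sobolev embedding to obtain the $C^1$ bound. You simply spell out the details the paper leaves implicit (closedness of the constrained subspace, absorption of the $\sff_u:\sff_u$ term via Poincar\'e, graph parametrization and verification of strong ellipticity), all of which are standard and consistent with the paper's terse argument.
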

 
\begin{lem}[Harmonic replacement]\label{lem:harmrepl}
Given a section $v\in \Lip(B^M_1,\nu M)$, there exists a unique section $ w\in H^1(B^M_1,\nu M)$ such that $w-v\in H^1_0(B^M_1,\nu M)$ and 
\begin{equation*}
    \int_M \big( D w:D \varphi -2 \sff_w:\sff_\varphi\big) =0\qquad\text{for all }\varphi\in C^1_c(B^M_1,\nu M)\,.
\end{equation*}
    Furthermore, $w\in C^\infty(B^M_1,\nu M)$ and 
\begin{equation}\label{eq:harmrep}
    \|w\|_{C^{2,1/2}(B^M_1)} \le C \|w\|_{L^\infty(B^M_1)}\le C^2\|v\|_{L^\infty(B^M_1)}\,,   
    \end{equation}
    where $C$ depends only on $m,$ $n$ and $\delta_0$.
\end{lem}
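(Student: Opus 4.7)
The argument follows the standard direct-method template for a strongly elliptic linear system, with the main work concentrated in the $L^\infty$ bound. First I would record continuity and coercivity on $H^1_0(B^M_1,\nu M)$ of the bilinear form
\[
a(u,\varphi)\defeq\int_M\big(Du:D\varphi-2\,\sff_u:\sff_\varphi\big)\,\dd\cH^m.
\]
The $\delta$-flatness hypothesis yields $\|\sff\|_{L^\infty(B^M_1)}\le C\delta_0$, hence $|\sff_u:\sff_\varphi|\le C\delta_0^2|u||\varphi|$; combined with the Poincar\'e inequality on $B^M_1$ this gives
\[
a(u,u)\ge(1-C\delta_0^2)\|Du\|_{L^2}^2\ge c\,\|u\|_{H^1(B^M_1)}^2
\]
once $\delta_0$ is small enough, a restriction we are already free to impose.

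Next, I would produce $w$ by perturbation: seek $w=v+u$ with $u\in H^1_0(B^M_1,\nu M)$. By density of $C^1_c(B^M_1,\nu M)$ in $H^1_0(B^M_1,\nu M)$, the identity in the statement is equivalent to $a(u,\varphi)=-a(v,\varphi)$ for all $\varphi\in H^1_0(B^M_1,\nu M)$; since $v\in\Lip(B^M_1,\nu M)$, the right-hand side is a bounded linear functional on $H^1_0$, and Lax--Milgram produces a unique such $u$. Uniqueness of $w$ follows from coercivity applied to the difference of two solutions. For the interior smoothness, $\LL_M w=0$ is a strongly elliptic linear system with smooth (in fact analytic, in our applications) coefficients; Schauder and $W^{2,p}$ bootstrap then give $w\in C^\infty(B^M_1,\nu M)$ and the first inequality of \eqref{eq:harmrep}.

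The delicate point is the second inequality of \eqref{eq:harmrep}, namely the bound $\|w\|_{L^\infty(B^M_1)}\le C\|v\|_{L^\infty(B^M_1)}$, because the zero-order coupling $2\sff_w:\sff$ has no definite sign and no scalar maximum principle applies directly to the vector $w$. I would instead pass to the scalar function $|w|^2$. Since $w$ is normal, $w\cdot(\lap_M w)^\top=0$, and the equation gives $(\lap_M w)^\perp=-2\,\sff_w:\sff$; unpacking the definition of $\sff_w$ one sees $w\cdot(\sff_w:\sff)=|\sff_w|^2$, and therefore
\[
\lap_M|w|^2=2|Dw|^2+2\,w\cdot\lap_M w=2|Dw|^2-4|\sff_w|^2\ge -C\delta_0^2\,|w|^2,
\]
an inequality that holds in the weak sense by testing $a(w,w\psi)=0$ against a nonnegative $\psi\in C^1_c(B^M_1)$. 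Thus $|w|^2$ is a weak subsolution of the scalar operator $-\lap_M-C\delta_0^2$, whose first Dirichlet eigenvalue on $B^M_1$ is bounded away from zero for $\delta_0$ small. Comparison with the solution of the same scalar problem having boundary data $|v|^2|_{\partial B^M_1}$, whose $L^\infty$ norm is controlled by $\|v\|_{L^\infty(B^M_1)}^2$ via the maximum principle, yields $\|w\|_{L^\infty(B^M_1)}^2\le C\|v\|_{L^\infty(B^M_1)}^2$, completing the proof.
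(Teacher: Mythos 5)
Your approach is essentially the paper's: existence via Lax--Milgram and coercivity for small $\delta_0$, interior $C^\infty$ and the first inequality via Schauder, and the $L^\infty$ bound via the scalar subsolution property $\lap_M |w|^2 + C\delta_0^2 |w|^2 \ge 0$ (obtained by testing the weak formulation against $\zeta w$, exactly as in the paper). The computation $w\cdot(\sff_w\colon\sff)=|\sff_w|^2$ is correct, and the weak-form derivation you sketch matches the paper's.

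The one step that does not hold as stated is the final assertion that the comparison solution $\psi$ of $(-\lap_M - C\delta_0^2)\psi = 0$ with boundary data $|v|^2$ has its $L^\infty$ norm ``controlled by $\|v\|_{L^\infty}^2$ via the maximum principle.'' The classical maximum principle fails for $-\lap_M - c$ with $c>0$: $\psi$ will in general exceed its boundary values, and the gap between $\sup\psi$ and $\sup_{\partial B^M_1}|v|^2$ blows up as $c\uparrow\lambda_1(B^M_1)$. What saves the day is precisely the eigenfunction device used in the paper: take $\varphi_1>0$ the first Dirichlet eigenfunction of $-\lap_M$ on the \emph{slightly larger} ball $B^M_{7/6}$, which is therefore bounded away from zero on $\overline{B^M_1}$; for $C\delta_0^2 < \lambda_1(B^M_{7/6})$ the ratio $|w|^2/\varphi_1$ (or $\psi/\varphi_1$ in your formulation) can have no interior maximum, which converts the boundary bound into an interior one with a constant $\sup_{\partial B^M_1}\varphi_1/\inf_{B^M_1}\varphi_1$. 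You clearly have the right mechanism in mind since you invoke the first Dirichlet eigenvalue, so this is an imprecision rather than a wrong idea --- but as written the appeal to ``the maximum principle'' is a non-sequitur, and the eigenfunction comparison you implicitly need is exactly what the paper supplies.
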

This shows in particular that if $\LL_M w=0$ and $w=0$ on the boundary, then $w=0$, provided $\delta_0$ is small.

\subsection{The minimal surface system and its linearization}\label{vfdscascdsa}
Given a normal section $f$ of $M$, let $\Gamma_f=\{(x+f(x):x\in M\}$ be its graph. If the size of $f$ (in a sense which will be specified below) is a small number $\eps>0$, we will then assert the well known-expansion expansion (again in a sense which will be specified further)
\begin{equation}\label{eq:Hexpansion}
    H_{\Gamma_f} = H_M + \LL_M f + O(\eps^2)\,,
\end{equation}
where $H_{\Gamma_f}(x)\in \R^{m+n}$ is the mean curvature vector of $\Gamma_f$ at the point $x+f(x)$. We will in fact need two versions of \eqref{eq:Hexpansion}, depending on how we measure the ``size'' of $f$. The first is suited for the case when $f$ is merely Lipschitz.
\begin{lem}\label{lem:geometriclin}
There are constants $C$ and $\ell>0$, depending only on $m$ and $n$ such that the following holds for every $\delta_0$ smaller than a positive geometric constant. Assume that $M$ is a $\delta_0$-flat minimal surface in $\bC_2$ and let $f\in \Lip (B^M_1, \nu M)$ with $\Lip (f) \leq \ell$.
Then the following identity holds for every $\varphi\in C^1 (B^M_1, \nu M)$
    \begin{equation}
       \Div_{\Gamma_{ f}}(\varphi\circ p) (x+f(x))=\big(D  f :D \varphi-2\sff_f:\sff_{ \varphi}\big)(x)+R(x)\qquad\text{for $\mathcal{H}^m$-a.e.\ $x\in B_1^M$}\,,\notag
    \end{equation}
    where the error term $R$ satisfies the estimate
    \begin{equation}
        |R|\le C| D\varphi|\big(\delta_0| f|+|D f|\big)^2\qquad\text{$\mathcal{H}^m$-a.e.\ on $B_1^M$}\,.\notag
    \end{equation}
    Moreover,
    \begin{equation}\label{notag}
        |D  f(x)|^2\le C(\delta_0^2 | f(x)|^2+|T_{x}M-T_{ x+{ f}(x)}\Gamma_{ f}|^2)\qquad\text{for $\mathcal{H}^m$-a.e.\ $x\in B_1^M$}\,.\notag
    \end{equation}
\end{lem}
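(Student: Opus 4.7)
My plan is to compute $\Div_{\Gamma_f}(\varphi\circ p)$ at $y = x+f(x)$ directly, using the natural non-orthonormal frame $v_k \defeq e_k + D_{e_k}f$ of $T_y\Gamma_f$, which is defined at $\mathcal{H}^m$-a.e.\ $x$ by Rademacher's theorem. Two elementary differentiation identities drive everything. Differentiating $f\cdot e_j \equiv 0$ on $M$ gives $D_{e_i} f \cdot e_j = -\sff_f(e_i, e_j)$, whence the Gram matrix
\[
G_{jk} \defeq v_j\cdot v_k = \delta_{jk} - 2\sff_f(e_j, e_k) + D_{e_j}f\cdot D_{e_k}f \eqdef \delta_{jk} + A_{jk}\,,
\]
with $|A_{jk}| \le C(\delta_0|f| + |Df|^2)$; the same trick for $\varphi$ yields $v_j\cdot D_{e_k}\varphi = -\sff_\varphi(e_k, e_j) + D_{e_j}f \cdot D_{e_k}\varphi$. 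Moreover, Lemma \ref{lem:matrixp} (or the geometric observation that curves in $\Gamma_f$ project to curves in $M$) gives $Dp(y) v_k = (\Id - \sff_{f(x)})^{-1}(e_k - \sff_{f(x)} e_k) = e_k$, so $D_{v_k}(\varphi\circ p)(y) = D_{e_k}\varphi(x)$.

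Writing $\Div_{\Gamma_f}(\varphi\circ p)(y) = \sum_{j,k} G^{jk}\, v_j\cdot D_{e_k}\varphi$ and expanding $G^{-1} = \Id - A + O(|A|^2)$, the identity-contribution is $\sum_k v_k\cdot D_{e_k}\varphi = -H_M\cdot \varphi + Df:D\varphi$, which collapses to $Df:D\varphi$ by minimality of $M$. The leading piece of the $-A$ contribution, pairing $-2\sff_f(e_j,e_k)$ with $-\sff_\varphi(e_k,e_j)$, gives exactly $-2\sff_f:\sff_\varphi$. Every other leftover carries at least two factors from $\{\delta_0|f|, |Df|\}$ together with one factor of size $|D\varphi|$: for example, $\sff_f(e_j,e_k)(D_{e_j}f\cdot D_{e_k}\varphi)$ is $O(\delta_0|f|\cdot|Df|\cdot|D\varphi|)$, and the tail $O(|A|^2|D\varphi|)$ is $O((\delta_0|f|+|Df|)^2|D\varphi|)$ once $|Df|\le\ell\le 1$. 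The only term where a $|\varphi|$ could naively appear, namely $(D_{e_j}f\cdot D_{e_k}f)\sff_\varphi(e_j,e_k)$, is instead bounded by $|Df|^2|D\varphi|$ via the identity $|\sff_\varphi|_{HS}^2 = \sum_i |(D_{e_i}\varphi)^T|^2 \le |D\varphi|^2$, again obtained from differentiating $\varphi\cdot e_j \equiv 0$. Collecting all terms yields the claimed remainder bound.

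For the second inequality, split $D_{e_i} f = (D_{e_i} f)^T + (D_{e_i} f)^\perp$ with respect to $T_x M$. The tangential component is $-\sum_j \sff_f(e_i, e_j) e_j$, so $|(Df)^T|^2 \le C\delta_0^2 |f|^2$. The normal component is precisely the projection of $v_i$ onto $T_x^\perp M$, and a standard linear-algebraic estimate (valid once $|Df|\le\ell$ keeps $\{v_i\}$ close to orthonormal) yields $\sum_i |(D_{e_i} f)^\perp|^2 \le C|T_x M - T_{x+f(x)}\Gamma_f|^2$. Summing the two bounds gives the conclusion. The principal technical obstacle throughout is precisely this meticulous bookkeeping of cross-terms: every leftover must be controlled by $|D\varphi|$ rather than $|\varphi|$, which rests on the (perhaps non-obvious) observation that for a normal section the tangential component of the Euclidean gradient equals $-\sff_\varphi$.
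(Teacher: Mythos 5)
Your proof is correct and, for the divergence identity, follows essentially the same route as the paper: you both work with the non-orthonormal frame $v_k = e_k + D_{e_k}f$ of $T_{x+f(x)}\Gamma_f$, compute the Gram matrix $G_{jk}=\delta_{jk}-2\sff_f(e_j,e_k)+D_{e_j}f\cdot D_{e_k}f$, expand $G^{-1}$, collect the order-zero term (which vanishes by $H_M=0$), the order-one term $Df:D\varphi - 2\sff_f:\sff_\varphi$, and bound the rest. Your explicit isolation of the chain-rule fact $Dp(y)v_k=e_k$ (hence $D_{v_k}(\varphi\circ p)=D_{e_k}\varphi$) and of the bound $|\sff_\varphi|\le |D\varphi|$ are both implicit in the paper's computation; making them explicit is a small but genuine improvement in transparency, since the latter is exactly why no stray factor of $|\varphi|$ survives in the remainder.

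Where you genuinely diverge is in the second inequality. The paper expands $\tfrac12|P_{T\Gamma}-P_{TM}|^2 = m - P_{T\Gamma}\!:\!P_{TM}$ in terms of $g^{ij}$, $w_i$, $e_k$ and pushes through a lengthy algebraic cancellation to isolate $|Df|^2 - \sum_i|P_{TM}D_{e_i}f|^2$. You instead split $D_{e_i}f$ into its tangential and normal parts relative to $T_xM$: the tangential part equals $-\sum_j\sff_f(e_i,e_j)e_j$, giving the $\delta_0^2|f|^2$ bound directly, while the normal part is $(P_{T\Gamma}-P_{TM})v_i$ (since $v_i\in T_{x+f(x)}\Gamma_f$), giving $\sum_i|(D_{e_i}f)^\perp|^2\le C|P_{T\Gamma}-P_{TM}|^2$ once $|v_i|\approx 1$. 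This is shorter and conceptually cleaner than the paper's expansion, and the Pythagorean identity $|Df|^2=|(Df)^T|^2+|(Df)^\perp|^2$ makes the conclusion immediate. Both arguments are correct; yours buys a more transparent second step at no cost.
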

In the second version of \eqref{eq:Hexpansion} we assume in addition that $f$ is $C^{2,\alpha}$. 
\begin{lem}\label{vefdscacdsac}
There are constants $C$ and $\ell>0$ such that the following holds for all $\delta_0>0$ small enough. Assume $M$ is a $\delta_0$-flat minimal surface in $\bC_2$ and let 
$f\in C^{2,1/2}(B_1^M,\nu M)$ be such that $\|f\|_{C^1}\leq \ell$ small enough. Then \begin{equation}
        \| H_{\Gamma_f} (x+f(x))- \LL_M f(x)\|_{C^{1/2}(B_1^M)}\le C \|f\|_{C^{2,1/2}(B_1^M)}^2\, .\notag
    \end{equation}
\end{lem}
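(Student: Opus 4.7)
The strategy is to regard $\mathcal{F}(f)(x)\defeq H_{\Gamma_f}(x+f(x))$ as a smooth nonlinear second-order differential operator in $f$ and Taylor-expand it around $f=0$. Choose a smooth orthonormal frame $\{e_i\}_{i=1}^m$ on $M\cap\bC_2$ and parametrize $\Gamma_f$ by $\Phi(x)=x+f(x)$. Writing $\partial_i\Phi=e_i+D_{e_i}f$ and $\partial_i\partial_j\Phi=D_{e_j}e_i+D_{e_j}D_{e_i}f$, the induced metric $g_{ij}=\partial_i\Phi\cdot\partial_j\Phi$, its inverse $g^{ij}$, and the normal projection $P^{\perp_{\Gamma_f}}$ are explicit rational functions of the jet $(f,Df)$ (and, for the projection, $D^2f$) with coefficients depending smoothly on $x$ through the geometry of $M$. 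The trace formula $H_{\Gamma_f}=g^{ij}(\partial_i\partial_j\Phi)^{\perp_{\Gamma_f}}$ then yields
\[
\mathcal{F}(f)(x)=\Psi\bigl(x,f(x),Df(x),D^2 f(x)\bigr),
\]
where $\Psi(x,p,P,Q)$ is real-analytic in $(p,P,Q)$ for $|p|+|P|<c(\delta_0,m,n)$, with all bounds depending only on $m,n,\delta_0$.

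Next I would identify the first two terms of the Taylor expansion of $\Psi$ at $(p,P,Q)=(0,0,0)$. The zeroth-order term is $\Psi(x,0,0,0)=H_M(x)=0$ because $M$ is minimal. The linear part is exactly $\LL_M f$: this can be checked by expanding $g^{ij}$ and $(\partial_i\partial_j\Phi)^\perp$ to linear order and using $\sum_i\sff(e_i,e_i)=H_M=0$, or equivalently by specializing the weak identity of Lemma \ref{lem:geometriclin} to smooth fields, replacing $f$ by $tf$, differentiating in $t$ at $t=0$, and matching with the bilinear form defining $\LL_M$ in Section \ref{jacop}. Taylor's theorem with integral remainder then gives
\[
\mathcal{F}(f)(x)-\LL_M f(x)=\int_0^1(1-t)\,D^2_{(p,P,Q)}\Psi\bigl(x,t\mathbf{j}(x)\bigr)[\mathbf{j}(x),\mathbf{j}(x)]\,\dd t,
\]
where $\mathbf{j}(x)\defeq (f(x),Df(x),D^2 f(x))$; the integrand is a finite sum of terms of the form $a(x,t\mathbf{j}(x))\,j_\alpha(x)\,j_\beta(x)$, with $a$ smooth and uniformly bounded as long as $\|f\|_{C^1}\le\ell$.

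To convert this pointwise identity into a $C^{1/2}$ estimate I would use (i) the Banach-algebra property $\|uv\|_{C^{1/2}(B^M_1)}\le C\|u\|_{C^{1/2}}\|v\|_{C^{1/2}}$, and (ii) the composition estimate: if $a=a(x,p)$ is smooth and $\|\mathbf{j}\|_{L^\infty}\le\ell$, then $\|a(\cdot,t\mathbf{j}(\cdot))\|_{C^{1/2}(B^M_1)}\le C(1+\|\mathbf{j}\|_{C^{1/2}})$. Since $f\in C^{2,1/2}(B^M_1)$ implies that each of $f,Df,D^2f$ has $C^{1/2}$-norm at most $\|f\|_{C^{2,1/2}}$, every term $a\,j_\alpha j_\beta$ has $C^{1/2}$-norm bounded by $C\|f\|_{C^{2,1/2}}^2$; integrating in $t\in[0,1]$ preserves the bound and yields the claim.

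The main obstacle is bookkeeping in the linearization step: one must carefully track the linear parts $g^{ij}=\delta^{ij}+2\sff_f(e_i,e_j)+O(|\mathbf{j}|^2)$ and $(\partial_i\partial_j\Phi)^\perp=\sff(e_i,e_j)+(D_{e_j}D_{e_i}f)^\perp+O(|\mathbf{j}|^2)$, recognize that upon tracing the combination $(\lap_M f)^\perp+2\sff_f{:}\sff$, namely $\LL_M f$, appears, and verify that the would-be zeroth-order contribution $g^{ij}\sff(e_i,e_j)$ reduces to $H_M=0$. Once this identification is made, the Hölder-space estimates of the last paragraph are routine.
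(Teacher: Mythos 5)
Your proposal is correct and takes essentially the same route as the paper: expand $H_{\Gamma_f}(x+f(x))=g^{ij}(\partial_i\partial_j\Phi)-g^{hk}g^{ij}w_h(w_k\cdot\partial_i\partial_j\Phi)$ to second order in the jet of $f$, observe that the zeroth-order term is $H_M=0$, verify that the first-order term is $\LL_M f$, and bound the remainder in $C^{1/2}$ by the Banach-algebra property of Hölder spaces. The paper carries out the linearization entirely by explicit index computations rather than invoking Taylor's theorem with integral remainder abstractly, but the content is the same; the only thing worth noting is that, since $\Psi$ is affine in $Q=D^2f$, the quadratic remainder contains a term $(\partial^2_{(p,P)}A)\cdot Q\,[\mathbf{j}',\mathbf{j}']$ that is cubic in the full jet, so the clean bound $C\|f\|_{C^{2,1/2}}^2$ requires using $\|f\|_{C^1}\le\ell$ to absorb the extra factors — a point left implicit in both your sketch and the paper.
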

From \eqref{eq:Hexpansion} one can solve the minimal surface equation with an inverse function argument since the linearized operator $\LL_M$ is invertible.
\begin{lem}[Classical solutions to the minimal surface system]\label{lem:solveMSS}
There are geometic constants $\eps_0>0$, $\delta_0>0$, and $C$, such that the following holds. 
Let $M$ be a $\delta$-flat minimal surface in $\bC_2$ with $\delta < \delta_0$ and assume that $ h\in C^{2,1/2}(B_1^M,\nu M)$ satisfies
\[
\LL_M h =0\quad\text{and}\quad \boldsymbol{\eps} \defeq \|h\|_{C^{2,1/2}(B^M_1)} \le \eps_0\,.
\]
Then there exists $ h'\in C^{2,1/2}(B_1^M,\nu M)$ such that its graph is a classical
minimal surface and 
\[
\|h-h'\|_{C^{2,1/2}(B^M_1)} \le C \boldsymbol{\eps}^2\,.
\]
In particular, $\Gamma_{h'}$ is $(\delta+C\boldsymbol{\eps})$-flat in $\bC_{1/2}$.
\end{lem}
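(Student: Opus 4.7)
The plan is to solve the fully nonlinear minimal surface equation on $B^M_1$ by perturbing the linear solution $h$ via a Banach fixed-point argument. By Lemma \ref{vefdscacdsac}, for every normal section $f\in C^{2,1/2}(B^M_1,\nu M)$ with small $C^1$-norm one has the expansion
\[
H_{\Gamma_f}(x+f(x)) = \LL_M f(x)+N(f)(x)\qquad\text{with}\qquad \|N(f)\|_{C^{1/2}(B^M_1)}\le C\|f\|_{C^{2,1/2}(B^M_1)}^{2}.
\]
Writing the candidate solution as $h'=h+w$ and using $\LL_M h=0$, the minimality condition $H_{\Gamma_{h'}}=0$ is equivalent to $\LL_M w = -N(h+w)$. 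I would impose the Dirichlet boundary condition $w|_{\partial B^M_1}=0$ (so $h'$ agrees with $h$ on $\partial B^M_1$), and invoke Lemma \ref{lem:W2pbounds} together with the invertibility of $\LL_M$ under zero boundary data noted after Lemma \ref{lem:harmrepl}, combined with standard boundary Schauder estimates for the strongly elliptic system $\LL_M$ with smooth coefficients, to produce a bounded solution operator $\mathcal{S}\colon C^{1/2}(B^M_1,\nu M)\to C^{2,1/2}(B^M_1,\nu M)$ satisfying $\|\mathcal{S}\phi\|_{C^{2,1/2}}\le C_0\|\phi\|_{C^{1/2}}$.

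Next I would define $T(w)\defeq -\mathcal{S}(N(h+w))$ on the closed ball
\[
\mathcal{B}_K\defeq\{w\in C^{2,1/2}(B^M_1,\nu M):\|w\|_{C^{2,1/2}}\le K\boldsymbol{\eps}^2,\ w|_{\partial B^M_1}=0\}
\]
and verify the two standard requirements. The self-map property follows from
\[
\|T(w)\|_{C^{2,1/2}}\le C_0\|N(h+w)\|_{C^{1/2}}\le C_0 C\,\|h+w\|_{C^{2,1/2}}^{2}\le 4C_0 C\,\boldsymbol{\eps}^2,
\]
so for $K=4C_0 C$ and $\boldsymbol{\eps}\le\eps_0$ small enough, $T$ sends $\mathcal{B}_K$ into itself. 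For contractivity I will need the Lipschitz-in-difference refinement $\|N(f_1)-N(f_2)\|_{C^{1/2}}\le C(\|f_1\|_{C^{2,1/2}}+\|f_2\|_{C^{2,1/2}})\|f_1-f_2\|_{C^{2,1/2}}$, which is obtained by Taylor-expanding $t\mapsto H_{\Gamma_{(1-t)f_1+tf_2}}(\cdot+(1-t)f_1(\cdot)+tf_2(\cdot))$ exactly as in the proof of Lemma \ref{vefdscacdsac}; this yields $\|T(w_1)-T(w_2)\|_{C^{2,1/2}}\le C\boldsymbol{\eps}\,\|w_1-w_2\|_{C^{2,1/2}}$, a strict contraction for $\boldsymbol{\eps}$ small.

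The unique fixed point $w^\ast$ produces $h'\defeq h+w^\ast$ with $\|h-h'\|_{C^{2,1/2}(B^M_1)}\le K\boldsymbol{\eps}^2$; elliptic bootstrap then upgrades $h'$ from $C^{2,1/2}$ to $C^\infty$, so $\Gamma_{h'}$ is a classical minimal surface. For the flatness claim, $M$ is by hypothesis the graph of some $g\in C^2(B_2,\R^n)$ with scale-invariant $C^2$ norm at most $\delta$, and $\|h'\|_{C^2(B^M_1)}\le\|h\|_{C^2}+\|w^\ast\|_{C^2}\le \boldsymbol{\eps}+K\boldsymbol{\eps}^2\le C\boldsymbol{\eps}$; expressing $\Gamma_{h'}\cap\bC_{1/2}$ as a graph over $\pi_0$ (possible on the strictly smaller cylinder $\bC_{1/2}$ since $M$ itself is nearly horizontal) adds the $C^2$ norms of $g$ and of the normal section, giving the asserted $(\delta+C\boldsymbol{\eps})$-flatness in $\bC_{1/2}$. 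The main obstacle is the Lipschitz-in-difference refinement of Lemma \ref{vefdscacdsac} required for the contraction step: this is essentially a bookkeeping upgrade of the expansion already proved there, but must be carried out with care to keep constants uniform as $f_1,f_2$ vary in a $C^{2,1/2}$-ball of size $O(\boldsymbol{\eps})$; a minor secondary point is the up-to-the-boundary Schauder estimate for $\LL_M$ on the smooth domain $B^M_1$, which is classical for strongly elliptic systems with smooth coefficients.
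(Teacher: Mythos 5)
Your fixed-point argument is mathematically equivalent to the paper's proof, which phrases the same construction as an application of the inverse function theorem to $\Phi(v)\defeq (H_v, v|_{\partial B^M_1})$ on the Banach spaces $X=C^{2,1/2}(B^M_1,\nu M)$, $Y\times Z$; the $C^1$-ness of $\Phi$ that the paper verifies is precisely what underlies your Lipschitz-in-difference estimate for $N$, and the injectivity/surjectivity of $D\Phi(0)$ is what underlies your solution operator $\mathcal{S}$. Both routes lead to $\|h-h'\|_X\le C\|H_h\|_Y\le C\boldsymbol{\eps}^2$, so this is essentially the paper's argument unrolled.
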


\subsection{Some results on stationary varifolds}
We define the tilt-excess in the cylinder $\bC_r(x,\pi')$, with respect to $\pi$ as 
\begin{equation*}
    \bE(V,\bC_r(x,\pi'),\pi)\defeq\frac{1}{\omega_m r^m}\int_{\bC_r(x,\pi')} |T_y V-\pi|^2\,\dd\|V\|(y)\,.
\end{equation*}
We write $\bE(V,\bC_r(x,\pi))$ in place of $\bE(V,\bC_r(x,\pi),\pi)$ and, as usual, we omit $x$ when it is $0$ and $\pi$ when it is $\pi_0 = \R^m \times \{0\}$.

The following two results are taken from \cite{CCSvarifolds}, see also the references therein.  
 \begin{lem}\label{vfedacssc} Let $V$ be an $m$-dimensional integral varifold stationary in $\bC_1$ and let $Q\in\N$.
Let $0<r_1<r_2<1$ and let $\eta\in(0,\frac{1}{2})$. Assume that $\bE (V,\bC_1)$ is small enough, depending on $m$, $n$, $Q$, $r_1$, $r_2$ and $\eta$, and that $\frac{\|V\|(\bC_1)}{\omega_m}\le Q+\frac{3}{4}$. Then, there exists $Q'\in \{0,\dots,Q\}$ such 
	\begin{equation}\notag
		\frac{\|V\|(\bC_{r}(x))}{\omega_m r^m}\in (Q'-\eta,Q'+\eta)
        \qquad \mbox{for any $x\in \bC_{1}$ and any $r >r_1$ with $\bC_r(x)\subset \bC_{r_2}$\,.}
	\end{equation}
\end{lem}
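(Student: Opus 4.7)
My plan is to argue by contradiction and compactness, along the standard line of reasoning for density-constancy-type statements under small tilt-excess. Suppose the conclusion failed for some choice of $(Q, r_1, r_2, \eta)$. Then one could extract a sequence $\{V_k\}$ of stationary integral $m$-dimensional varifolds in $\bC_1$ with $\bE(V_k, \bC_1) \to 0$ and $\|V_k\|(\bC_1)/\omega_m \le Q + \tfrac{3}{4}$, such that: for every $Q'\in\{0,\ldots,Q\}$ there would exist $x_{k,Q'}\in\bC_1$ and $r_{k,Q'}\in(r_1,r_2]$, with $\bC_{r_{k,Q'}}(x_{k,Q'})\subset \bC_{r_2}$, satisfying
\[
\left|\frac{\|V_k\|(\bC_{r_{k,Q'}}(x_{k,Q'}))}{\omega_m r_{k,Q'}^m} - Q'\right| \ge \eta\,.
\]

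I would then invoke Allard's compactness theorem to pass, along a subsequence, to a weak limit $V_\infty$ which is a stationary integral varifold in $\bC_1$. The vanishing of the tilt-excess, combined with a standard height-bound argument (following from the small tilt-excess plus the upper mass bound), would confine $\supp V_k$ in a $\pi_0^\perp$-neighborhood of $\pi_0$ that shrinks to zero, so $\supp V_\infty \subset \pi_0 \cap \bC_1$. The constancy theorem for stationary integral varifolds on a plane then yields $V_\infty = Q_\infty\,\mathcal H^m\res(\pi_0\cap\bC_1)$ for some nonnegative integer $Q_\infty$, and lower semicontinuity of the mass under weak convergence, together with the hypothesis, forces $Q_\infty\le Q$.

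Next, I would pass to a further subsequence so that $x_{k,Q_\infty}\to\bar x$ and $r_{k,Q_\infty}\to\bar r\in[r_1,r_2]$. Since $\supp V_\infty\subset \pi_0$, the boundary $\partial\bC_{\bar r}(\bar x)$ meets $\supp V_\infty$ only inside the $(m-1)$-sphere $\partial B_{\bar r}(\bar x',\pi_0)$, which is $\mathcal H^m$-null. The standard portmanteau-type convergence of $\|V_k\|$ on open sets with $\|V_\infty\|$-null boundary, combined with $r_{k,Q_\infty}\to\bar r\ge r_1>0$, would give
\[
\frac{\|V_k\|(\bC_{r_{k,Q_\infty}}(x_{k,Q_\infty}))}{\omega_m r_{k,Q_\infty}^m}\;\longrightarrow\; \frac{\|V_\infty\|(\bC_{\bar r}(\bar x))}{\omega_m \bar r^m}=Q_\infty\,,
\]
which contradicts the $\eta$-separation displayed above when we take $Q'=Q_\infty$.

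The main technical subtlety I would expect concerns the fact that the cylinders $\bC_r(x)$ are unbounded in the $\pi_0^\perp$ direction, so the weak convergence $V_k\rightharpoonup V_\infty$ in $\bC_1$ does not immediately yield convergence of the masses $\|V_k\|(\bC_r(x))$. The height bound mentioned above cures this by reducing us to bounded subsets of $\bC_1$, after which the usual measure-theoretic convergence applies; the remainder of the argument is a routine application of Allard's compactness and the constancy theorem.
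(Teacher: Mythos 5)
The central step of your argument — the claim that the small tilt-excess (together with the mass bound and the height bound) confines $\supp V_k$ to a $\pi_0^\perp$-neighborhood of $\pi_0$ that shrinks to zero, so that $\supp V_\infty\subset \pi_0\cap\bC_1$ — is false, and the gap it leaves is not cosmetic. The height bound (Theorem~\ref{height}) confines $\supp V_k\cap\bC_{\bar r}$ to at most $Q$ thin slabs $\R^m\times B_{C\bE_k^{1/(2m)}}(y_h^k)$, but nothing controls the heights $y_h^k$: a sequence consisting of a single horizontal $m$-plane at height $k$, or of two horizontal planes at heights $0$ and $k$, satisfies all your hypotheses with tilt-excess identically $0$, yet the support is not near $\pi_0$, and in the second example the slabs drift apart. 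Consequently the weak-$*$ limit $V_\infty$ need not be a multiple of $\pi_0$; part (or all) of the mass may escape to infinity along $\pi_0^\perp$, and then the mass ratio in the (unbounded) cylinders, which does record the escaped mass, does \emph{not} converge to $\|V_\infty\|(\bC_{\bar r}(\bar x))/(\omega_m\bar r^m)$. Your final contradiction uses precisely this identification, so it collapses.

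To make the compactness route work one has to decompose $V_k$ according to the slab structure given by the height bound, cluster the slabs by which ones remain within bounded mutual distance, recenter each cluster in $\pi_0^\perp$ before taking limits, apply the constancy theorem to each recentered limit (which is now a finite union of parallel planes, hence a constant integer multiple of $\pi_0$ after projecting), and finally sum the contributions. The total then gives a single integer $Q'\le Q$, and only then does the density ratio converge to $Q'$ uniformly over the relevant cylinders. None of this bookkeeping appears in your write-up; as written, the proof does not establish the lemma. You may also find it cleaner to bypass compactness entirely by establishing an almost-monotonicity formula for the cylindrical density ratio under small tilt-excess, which directly compares $\|V\|(\bC_r(x))/(\omega_m r^m)$ at different $(x,r)$ and feeds into the integrality of the small-scale density, avoiding the mass-escape issue altogether.
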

\begin{thm}[Height bound]\label{height} Let $V$ be an $m$-dimensional integral varifold stationary in $\bC_1$, let $Q\in\N$, and let $ r\in (0,1)$.
Assume that  $\bE\defeq \bE(V,\bC_1)$  is small enough, depending on $m$, $n$, $Q$ and $r$, and that $ \frac{\|V\|(\bC_1)}{\omega_m}\le Q+\frac{1}{2}$. 
Then, there exist a constant $C$ depending on  $m$, $n$, $Q$ and $ r$,  and points  $y_1,\dots,y_Q\in \R^n$ such that 
\begin{equation}\notag
		\supp(V)\cap\bC_{ r}\subset \bigcup_{h=1,\dots,Q} \R^m\times B_{C\bE^{1/(2m)}}(y_h)\,.
\end{equation}
Moreover, if $\Theta(V,0)=Q$, then 
\begin{equation}\notag
		\supp(V)\cap\bC_{ r}\subset  \R^m\times B_{C\bE^{1/2}}(0)\,.
\end{equation}
\end{thm}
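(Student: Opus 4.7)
This is a classical Allard--Brakke type height bound, and the plan is to combine the multi-valued Lipschitz approximation of Theorem \ref{Lipapprox} (specialized to $M=\pi_0$, $\delta=0$, and after an appropriate rescaling) with the monotonicity formula and the integrality of densities from Lemma \ref{vfedacssc}. The Lipschitz approximation yields a $Q$-valued Lipschitz function $f\colon B_1^{\pi_0}\to \Iq(\R^n)$ whose graph coincides with $\supp(V)\cap \bC_1$ outside an exceptional set of mass $O(\bE/\lambda)$, with $\Lip(f)\lesssim \lambda^{1/(2m)}$, where the free parameter $\lambda$ is to be tuned.

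For the first bound, the image of $f$ decomposes into at most $Q$ graph-components over $B_r^{\pi_0}$, each a Lipschitz multi-graph of oscillation $\lesssim \lambda^{1/(2m)}$; picking one height $y_h$ per component covers the ``good'' portion of $\supp(V)$ by $\bigcup_{h=1}^Q \R^m\times B_{C\lambda^{1/(2m)}}(y_h)$. To extend the covering to points of $\supp(V)\cap \bC_r$ lying in the exceptional set, I would invoke Allard's monotonicity: at every $z\in\supp(V)$, $\|V\|(\B_\rho(z))\ge \omega_m\rho^m$ for $\rho<\dist(z,\partial \bC_1)$. If $z$ were at vertical distance greater than $C_0\lambda^{1/(2m)}$ from all strips, the ball $\B_\rho(z)$ with $\rho\sim \lambda^{1/(2m)}$ would be disjoint from the good graph, so its mass would be absorbed by the exceptional set of total mass $O(\bE/\lambda)$; balancing $\rho^m\sim \bE/\lambda$ against $\rho\sim \lambda^{1/(2m)}$ produces the sharp exponent $1/(2m)$ after choosing $\lambda$ appropriately.

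For the refined statement with $\Theta(V,0)=Q$, the extra input is the monotonicity identity
\[
\int_{\bC_1}\frac{|z^\perp|^2}{|z|^{m+2}}\,\dd\|V\|(z)=\frac{\|V\|(\bC_1)}{\omega_m}-Q\le \tfrac{1}{2},
\]
where $z^\perp$ denotes the component of $z$ normal to $T_z V$. This identity, together with the smallness of $\bE$, forces all sheets of the approximating Lipschitz function to pass through a small neighborhood of $0$. Quantitatively, the ``sheets through $0$'' constraint combined with the Caccioppoli-type inequality (Proposition \ref{prop:caccioppoli} with $M=\pi_0$) and a standard $L^2\to L^\infty$ elliptic bound (De Giorgi iteration, or directly Proposition \ref{prop:altezza}) yields $\|f\|_{L^\infty(B_r^{\pi_0})}\le C\bE^{1/2}$; rerunning the monotonicity/exceptional-set argument with this sharper baseline gives the covering by a single strip of width $C\bE^{1/2}$. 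The main obstacle is the calibration of parameters to extract the sharp exponent $1/(2m)$ in the first statement; in the refined case the rigidity from $\Theta(V,0)=Q$ removes the combinatorial difficulty of distinguishing sheets and makes the $L^2\to L^\infty$ upgrade straightforward.
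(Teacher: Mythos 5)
The paper does not prove Theorem~\ref{height}: it is explicitly cited from \cite{CCSvarifolds} (``The following two results are taken from \cite{CCSvarifolds}\ldots''), so there is no in-paper proof to compare against. Even so, your proposal has a structural flaw that makes it unusable here: it is circular. You invoke Theorem~\ref{Lipapprox} (specialized to $M=\pi_0$) as the main engine, but the paper's proof of Theorem~\ref{Lipapprox} relies on Theorem~\ref{height} in \textbf{Step 3} (``We can assume that $\tilde\lambda$ is small enough \ldots{} to apply Theorem~\ref{height} with $\bar r = \tfrac{6}{7}$'') precisely to separate the $Q$ strata and build the $Q$-valued map. Theorem~\ref{height} is logically upstream of the Lipschitz approximation, not downstream; the correct proof must be a direct argument (testing stationarity with a suitable vertical vector field, plus monotonicity), which is what \cite{CCSvarifolds} and the classical references do.

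There is also a quantitative gap that you yourself flag but understate. Your exceptional-set balancing gives: bad points lie at vertical distance $\gtrsim \lambda^{1/(2m)}$ from all sheets, yet each such point carries mass $\gtrsim \lambda^{m/(2m)} = \lambda^{1/2}$ by monotonicity, which must fit inside an exceptional set of mass $\lesssim \bE/\lambda$. This forces $\lambda^{3/2} \lesssim \bE$, i.e.\ $\lambda \lesssim \bE^{2/3}$, and the resulting strip width is $\lambda^{1/(2m)} \sim \bE^{1/(3m)}$, \emph{not} $\bE^{1/(2m)}$. The sharp exponent $1/(2m)$ is exactly what the paper needs downstream (e.g.\ in the diameter bound $\diam(S_h^{y,s}) \le D\tilde\lambda^{1/(2m)} s$ used in Step~3 of the proof of Theorem~\ref{Lipapprox}), so the loss is not cosmetic. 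The sharp constant comes from the direct Allard-style argument, not from a crude coarse balance of Lipschitz constant against exceptional mass. Your use of Propositions~\ref{prop:caccioppoli} and~\ref{prop:altezza} for the refined $\Theta(V,0)=Q$ case is not circular (those are proved independently in Section~4), but note that Proposition~\ref{prop:altezza} controls $\sup d_M$ by the $L^2$ norm of $d_M$ and the mean curvature, whereas the input here is the tilt excess $\bE$; you would still need the ``tilt-to-height'' step (again supplied by a direct Caccioppoli-type test, not by Lipschitz approximation) before Proposition~\ref{prop:altezza} can close the loop.
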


\section{Proof of the tilt excess inequality and of the height bound}

In this section we prove the tilt-excess inequality of Proposition \ref{prop:caccioppoli} and the height bound of Proposition \ref{prop:altezza}. We will use the following well-known fact in linear algebra, of which we include the elementary proof for the reader's convenience. Given a symmetric matrix $M\in \R^{d\times d}$ and a linear subspace $V\subset \R^d$ we will use  $\trace_{V}(M)$ for the trace of $p_V \cdot M$, where $p_V$ is the orthogonal projection onto $V$.

 \begin{lem}\label{tracce}
    Let $M\in \R^{d\times d} $ be symmetric and let $\lambda_1\le \ldots\le \lambda_d$ be its eigenvalues. Let $V$ be  a $k$-dimensional subspace of $\R^d,$ with $d\ge k$. Then $\trace_{V}(M)\ge \lambda_1+\ldots+\lambda_k$.
\end{lem}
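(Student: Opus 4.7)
My plan is the standard Ky Fan-style diagonalization argument; no step should present a real obstacle since the claim is a classical fact from linear algebra. Let me outline the steps.

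First I would diagonalize $M$ by choosing an orthonormal eigenbasis $e_1,\dots,e_d$ of $\R^d$ with $M e_j = \lambda_j e_j$, and then pick an orthonormal basis $v_1,\dots,v_k$ of $V$. Since the orthogonal projection onto $V$ can be written as $p_V = \sum_{i=1}^k v_i v_i^{T}$, and trace is cyclic, I would rewrite
\[
\trace_V(M) = \trac(p_V M) = \sum_{i=1}^k \langle v_i, M v_i\rangle.
\]
Expanding $v_i = \sum_{j=1}^d c_{ij} e_j$ and using $M e_j = \lambda_j e_j$ together with the orthonormality of $\{e_j\}$, this becomes
\[
\trace_V(M) = \sum_{i=1}^k \sum_{j=1}^d \lambda_j c_{ij}^2 = \sum_{j=1}^d \lambda_j \alpha_j, \qquad \text{where } \alpha_j := \sum_{i=1}^k c_{ij}^2.
\]

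Next I would establish the two constraints on the vector $(\alpha_1,\dots,\alpha_d)$. The orthonormality $\langle v_i, v_{i'}\rangle = \delta_{i i'}$ means that the $k\times d$ matrix $C=(c_{ij})$ has orthonormal rows, so its columns have squared norm at most $1$; equivalently, $0\le \alpha_j\le 1$ for every $j$. Moreover,
\[
\sum_{j=1}^d \alpha_j = \sum_{i=1}^k \sum_{j=1}^d c_{ij}^2 = \sum_{i=1}^k |v_i|^2 = k.
\]

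Finally, I would conclude by minimizing $\sum_{j=1}^d \lambda_j \alpha_j$ over all $(\alpha_j)$ satisfying $0\le \alpha_j\le 1$ and $\sum_j \alpha_j = k$. Since $\lambda_1\le\lambda_2\le\cdots\le\lambda_d$, any admissible $(\alpha_j)$ in which some $\alpha_{j_0}<1$ for $j_0\le k$ while some $\alpha_{j_1}>0$ for $j_1>k$ can be strictly decreased by shifting mass from $\alpha_{j_1}$ to $\alpha_{j_0}$; hence the minimum is attained at $\alpha_1=\cdots=\alpha_k=1$ and $\alpha_{k+1}=\cdots=\alpha_d=0$, yielding exactly $\lambda_1+\cdots+\lambda_k$. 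This gives $\trace_V(M)\ge \lambda_1+\cdots+\lambda_k$, as claimed.
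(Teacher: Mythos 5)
Your proof is correct and follows essentially the same approach as the paper: diagonalize $M$, pick an orthonormal basis of $V$, and reduce the claim to minimizing $\sum_j \lambda_j\alpha_j$ over weights $\alpha_j\in[0,1]$ with $\sum_j\alpha_j=k$. The only cosmetic difference is in the final minimization step, where the paper uses a direct three-line inequality chain instead of your exchange (mass-shifting) argument.
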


\begin{proof}
We choose an orthonormal basis of $\R^d$, $e_1,\dots,e_d$, for which  $M$ takes the form $M=\sum_{i=1}^d \lambda_i e_i\otimes e_i$. Take also $\xi_1,\dots,\xi_k$ orthonormal basis for $V$. Therefore,
\begin{equation}\notag
     \trace_{V}(M)=\sum_{i=1}^d\sum_{j=1}^k \lambda_i e_i\otimes e_i:\xi_j\otimes\xi_j=\sum_{i=1}^d\sum_{j=1}^k \lambda_i (e_i\cdot\xi_j)^2\,.
\end{equation}
Now, write $\beta_i\defeq\sum_{j=1}^k (e_i\cdot\xi_j)^2 $. Notice that $\beta_i\in [0,1]$ with $\sum_{i=1}^{d}\beta_i=k$, as $e_1,\dots,e_d$ is an orthonormal basis. The claim then follows from the observation that
    \begin{equation*}
        \sum_{i=1}^{d} \lambda_i\beta_i\ge \sum_{i=1}^k\lambda_i\,,
    \end{equation*}
which is quickly proved:
\begin{equation*}
    \sum_{i=1}^k(1-\beta_i)\lambda_i\le \lambda_k \sum_{i=1}^k (1-\beta_i)=\lambda_k\sum_{i=k+1}^d\beta_{i}\le \sum_{i=k+1}^d \beta_i \lambda_i\,.\qedhere
\end{equation*}
\end{proof}

\subsection{Proof of Propositions \ref{prop:caccioppoli}} 
To simplify our notation we write $d$ for $d_M$ and $p$ for $p_M$. 
Fix $z\in \supp (V)$ and let $e_1,\ldots,e_m$ be an orthonormal basis for $T_{p(z)}M$, and let $e_{m+1},\dots,e_{m+n}$ be an orthonormal basis for $T_{p(z)}^\perp M$ with $e_{m+1}=Dd(z)=\frac{z-p(z)}{|z-p(z)|}$. Assume that $e_1,\dots, e_m$ diagonalize $\sff_{e_{m+1}}$, with eigenvalues $\kappa_1,\dots,\kappa_m$, and recall that $\kappa_1+\dots+\kappa_m=H(p(z))\cdot e_{m+1}$.
Recall that by Lemma \ref{hessdist}  we have
\begin{equation*}
    \frac12 D^2 d^2 (z) = \underbrace{\sum_{i=1}^m -\frac{d(z)\kappa_i(z)}{1-\kappa_i(z) d(z)} e_i\otimes e_i}_{\eqdef A} + \underbrace{  \sum_{i={m+1}}^{m+n} e_i\otimes e_i}_{\eqdef B}\,.
\end{equation*}
\\\medskip\textbf{Step 1.} We claim that, for each $z\in \supp(V)\cap C_{6/7},$
\begin{equation}\label{eq:cdsac}
\lap_V \tfrac{1}{2}d^2\ge\tfrac 14 |T_zV-T_{p(z)}M|^2- |H| d-2|\sff|d^2 . 
\end{equation}
Denote by $\xi_1,\dots,\xi_m$ be an orthonormal basis for $T_z V$.
    We compute 
    \begin{equation*}
        \Delta_V \frac{d^2}{2}= \sum_{j=1}^m D^2\frac{d^2}{2}(z):\xi_j\otimes \xi_j = \sum_{j=1}^m \Big(A+\frac{1}{2}B\Big): \xi_j\otimes \xi_j+ \frac{1}{2}\sum_{j=1}^m B: \xi_j\otimes \xi_j\,.
    \end{equation*}
    Now we deal with the first summand. The matrix $A+\frac{1}{2}B$ has eigenvalues $-\frac{d\kappa_1}{1-\kappa_1 d},\dots,-\frac{d\kappa_m}{1-\kappa_m d},\frac12,\dots,\frac12$; since $M$ is  sufficiently flat (depending upon a geometric quantity)  and $d\le 1$ on $\supp V$,
    \begin{equation*}
        \frac{d|\kappa_i(z)|}{|1-\kappa_i(z)d(z)|}\le\frac14\qquad \text{for all }  i=1,\ldots,m\,.
    \end{equation*}
    By Lemma \ref{tracce} we then have the lower bound
    \begin{align*}
    \sum_{j=1}^m \Big(A+\frac{1}{2}B\Big): \xi_j\otimes \xi_j
    &\ge\sum_{i=1}^m \frac{d\kappa_i}{1-\kappa_id}=  d\, H(p(z))\cdot D d-d^2 \sum_{i=1}^m\frac{\kappa_i^2}{1-\kappa_id}\\
    &\ge-  d|H(p(z))|-2|\sff(p(z))|^2d^2\,.
    \end{align*}
    For what concerns the second summand, clearly
    \begin{equation*}
    \frac 12\sum_{j=1}^m B: \xi_j\otimes \xi_j= \frac 12\sum_{i=m+1}^{m+n}\sum_{j=1}^m  (e_i\cdot\xi_j)^2=\frac14|T_zV-T_{p(z)}M|^2\,.
    \end{equation*}
We thus conclude \eqref{eq:cdsac}. Notice in passing  that 
\begin{equation}
    \frac12|T_zV-T_{p(z)}M|^2\ge \sum_{i=m+1}^{m+n}  \sum_{j=1}^m  (e_i\cdot\xi_j)^2\ge | D_V d|^2\,,
\end{equation}
as $D d\in T_{p(z)}^\perp M$.
\medskip\\\textbf{Step 2.} We prove  \eqref{eq:caccioppoli}.
    We test the first variation with $X= \zeta^2   D \frac{d^2}{2} $ and find
    \begin{align*}
        0 &= \int \Div_{V}\Big(\zeta^2  D \frac{d^2}{2}\Big)\, \dd\|V\|=2\int \zeta d D_V\zeta\cdot D_V d\, \dd\|V\|+\int \zeta^2 \Delta_V \frac{d^2}{2} \, \dd\|V\|\\
        &\ge -4 \int d^2 | D_V \zeta|^2\, \dd\|V\|-\frac {1}{4} \int \zeta^2| D_V d|^2\, \dd\|V\|+\int \zeta^2 \Delta_V \frac{d^2}{2} \, \dd\|V\|\\
        &\ge -4 \int d^2 | D_V \zeta|^2\, \dd\|V\|-\frac1 8 \int \zeta^2 |T_zV-T_{p(z)}M|^2\, \dd\|V\|+\frac14\int \zeta^2 |T_zV-T_{p(z)}M|^2\, \dd\|V\|\\
        &\qquad\qquad-\int \zeta^2 |H|d\, \dd\|V\|-2\int \zeta^2 |\sff|d^2\, \dd\|V\|\,.
        \end{align*}
    Rearranging terms and recalling $|\sff|\le \delta_0\le1$, we get \eqref{eq:caccioppoli}. We turn to the proof of \eqref{eq:L2Linf}.
    
\subsection{Proof of Proposition \ref{prop:altezza}} We start with a computation which is well-known when $V$ is a classical minimal surface (see e.g.\ \cite{ColdingMinicozzi}). Here we present a variation of the rigorous argument given in \cite{DPGS} when $V$ is an integral stationary varifold. 
With the same definition of $g_{r,s}$ as in \cite{DPGS}, we replace \cite[(2.6)]{DPGS} by
    \begin{equation}
        \begin{split}
                    \Div_V\Big(\frac{d^2}{2}  D g_{r,s}-g_{r,s} D\frac{d^2}{2}\Big)&=\frac{d^2}{2}\Delta_V  g_{r,s}-g_{r,s}\Delta_V \frac{d^2}{2}\\
        &\le -\frac{1}{2}\frac{d^2\mathbf{1}_{\B_r(0)}}{\omega_m r^m} +\frac{1}{2}\frac{d^2\mathbf{1}_{\B_s(0)}}{\omega_m s^m}-g_{r,s}(-|H| d-2|\sff|d^2)\,, \label{e:to-integrate}
                \end{split}
    \end{equation}
    where $\mathbf{1}_E$ is the indicator function of a $E$ and we have used the inequality $\Delta_V g_{r,s}\le -\frac{\mathbf{1}_{\B_r(0)}}{\omega_m r^m}+\frac{\mathbf{1}_{\B_s(0)}}{\omega_m s^m}$. Therefore, if we set 
    \[
    I(r)\defeq \frac{1}{\omega_m r^m}\int_{\B_r}\frac{d^2}{2}\, \dd\|V\|\,,
    \]
    we integrate the inequality \eqref{e:to-integrate} in $\B_s$ to obtain
    \begin{align*}
        I(s)-I(r)&\ge -\int g_{r,s}|H|d \, \dd\|V\|-2\int g_{r,s} d^2 \, \dd\|V\|\\
        &\ge -\int g_{r,s}|H|^2 \, \dd\|V\| -4\int g_{r,s}d^2 \, \dd\|V\|\,
    \end{align*}
    where we have used that $g_{s,r}$ vanishes outside $ \B_s$ and $|\sff|\leq C \delta_0 \leq 1$ (since $\delta_0$ can be chosen sufficiently small).
    Recalling from \cite{DPGS} the bound $\Big| \frac{ g_{r,s}}{s-r}\Big|\le Cr^{1-m}$, we can divide the above by $s-r$ and let $s\downarrow r$ to obtain, in the sense of distributions, 
    \begin{equation}\notag
        I'(r)\ge -\frac{C r}{r^m}\int_{\B_r(0)} |H|^2\, \dd\|V\|-C rI(r)\ge -C\|V\|(\B_1(0))r\sup_{\bC_1\cap M}|H|^2-C r I(r)\,.
    \end{equation}
    This means that $\big(I(r)+C  \|V\|(\B_1(0))\sup_{\B_1(0)\cap M}|H|^2\big) e^{Cr}$ is increasing. Now, $I(r)\rightarrow \Theta(V,0)\tfrac{1}{2}d^2(0)$ as $r\downarrow 0$. On the other hand $\Theta(V,0)\ge 1$ if $0\in \supp (V)$ (because $\Theta (V, \cdot)$ is upper semicontinuous and $\Theta (V, x)\geq 1$ for $\mathcal{H}^m$-a.e.\ $x\in \supp (V)$). Thus we obtain that, if $0\in\supp(V)$,
    \begin{equation}\notag
        d^2(0)\le C\Big( \frac{1}{r^m}\int_{\B_r(0)} d^2\, \dd\|V\|+\|V\|(\bC_1)\sup_{\bC_1\cap M}|H|^2\Big)\qquad \mbox{for every $r\in (0,1)$}\,.
    \end{equation}
    We repeat the argument with $r=\frac{1}{2}$ and $x\in \supp (V)\cap \bC_{1/2}$ to obtain
    \begin{equation}\notag
    d^2(x)\le C\Big( 2^m\int_{\B_{1/2}(x)} d^2\, \dd\|V\|+\|V\|(\bC_1)\sup_{\bC_1\cap M}|H|^2\Big)\le C\Big( \int_{\B_{1}(0)} d^2\, \dd\|V\|+\|V\|(\bC_1)\sup_{\bC_1\cap M}|H|^2\Big)\,.
    \end{equation}

\section{The Lipschitz approximation}

This section is dedicated to prove Theorem \ref{Lipapprox}.

\medskip
    
We first notice that, without loss of generality, we can assume that $\lambda$ is smaller than a positive geometric quantity, which will be chosen depending only on $m$, $n$ and $Q$.
We let $g:B_{10}\rightarrow\R^n$ be such that $M$ is the graph of $f$ and we recall that, since $M$ is $\delta$-flat with $\delta\leq \delta_0$, we can assume $\|g\|_{C^2}\le \delta$.

We next define the ``non-centered'' maximal function as
	\begin{equation}\notag
		B_3^M\ni x\mapsto \bmax\be(x)\defeq \sup_{x\in B_s^M(y)\subset B_{4}^M}\bE(V,\bC_s(y,T_yM))\,,
	\end{equation}
    where we notice that, if $\delta$ is small enough and $B_s^M(y)\subset B_4^M$, then $\bC_s(y,T_yM)\cap\supp(V)\subset\bC_{9/2}$. We remark also that, if $\delta$  is small enough, then, for any $B_{2s}^M(y)\subset B_{4}^M$, we have the inclusions 
    \begin{equation}\label{fdvsacasdca}
        \bC_s(y,T_yM)\cap\supp(V)\subset p_M^{-1}(B_{3s/2}^M(y))\cap\supp(V)\subset \bC_{2s}(y,T_yM)\cap\supp(V)\,.
    \end{equation}
    The first inclusion follows because we can assume that $p_M$ is $3/2$-Lipschitz, provided $\delta$ is small enough. We next argue for the second inclusion. Take $z\in p_M^{-1}(B_{3s/2}^M(y))\cap\supp(V)$ and set $z'\defeq p_M(z)\in B_{3s/2}^M(y)$. Then, 
    \begin{equation}\notag
        |p_{T_yM}(z-z')|\le |p_{T_yM}-p_{T_{z'}M}||z-z'|+|p_{T_{z'}M}(z-z')|\le s/2\,,
    \end{equation}
    again provided that $\delta$ is small enough.
    
    For the rest of the proof we set our choice of $K$, which is 
    \begin{equation}\notag
        K\defeq \{x\in B_1^M:\bmax\be(x)\le \lambda+\omega_m(Q+1)1000^m\delta^2\eqdef\tilde\lambda\}\,.
    \end{equation}
$K$ is clearly closed and up to removal of an $\mathcal{H}^m$-null set we can also assume that $\Theta$ is integer-valued over $p_M^{-1} (K)$ (note that in the statement $K$ is claimed to be just a Borel set). 
\medskip\\\textbf{Step 1.} Let $W$ be a stationary varifold in $\bC_{5}$, with $\frac{\|W\|(\bC_{5})}{\omega_m 5^m}\le Q+ 1$. Take $B^M_{\rho}(x)\subset B_4^M$  and assume the existence of a constant $\mu\in (0,1)$ such that  
	\begin{equation}\label{vfeadsac}
		\bE(W,\bC_s(y,T_yM))\le \mu \qquad\text{for every $B_\rho^M(x)\subset B_s^M(y)\subset B_{4}^M$}\,,
	\end{equation}
	We claim that, if $\mu$ and $\delta$ are small enough (depending upon $m$, $n$ and $Q$), there is $Q'\in\{0,\dots,Q\}$ such that
	\begin{equation}\label{brvfd}
		\frac{\|W\|(\bC_s(y,T_yM))}{\omega_m s^m}\in (Q'-\textstyle{\frac{1}{2Q}},Q'+\textstyle{\frac{1}{2Q}})\qquad\text{for every $B_\rho^M(x)\subset B^M_s(y)\subset B_{7/2}^M(0)$}\,.
	\end{equation}
	Notice that, in particular, if $x\in B_1^M$ is such that \eqref{vfeadsac} holds for every $x\in B_s^M(y)\subset B^M_{4}$, then \eqref{brvfd} holds for every $x\in B_s^M(y)\subset B^M_{7/2}$.

For the proof of the claim, we start from the following remark. For any $\eta_0\in (0,1)$ there is a choice of $\delta$ and $\mu$ such that, if $B_{s}^M(y)\subset B_{7/2}^M$, then
\begin{align}
     \bC_{s}(y,\pi_0)\cap\supp(V)&\subset\bC_{s+\eta_0}(y,T_y M)\cap\supp(V)\,,\notag\\
          \bC_{s}(y,T_y M)\cap\supp(V)&\subset \bC_{s+\eta_0}(y,\pi_0)\cap\supp(V)\,.\notag
\end{align}
We now notice, choosing $y=0$, that
\begin{equation}\notag
    \bE(W,\bC_4)\le \mu\, .
\end{equation}
 In particular, if $\mu$ is small enough,  Lemma \ref{vfedacssc} applies with parameters $r_1=1/128$, $r_2=15/16$, and $\eta=1/(4Q)$, yielding $Q'\in\{0,\dots,Q\}$. Thus, for every $B_r(z,\pi_0)\subset B_{15/4}$ with $r> \frac{1}{32}$, we have that $\frac{\|W\|(\bC_{r}(z,\pi_0))}{\omega_m r^m}\in (Q'-\frac{1}{4Q},Q'+\frac{1}{4Q})$. Notice in passing that, if \eqref{vfeadsc} holds for $W$ for some integer $Q''$, then $Q'=Q''$. 
By the considerations above, for every $B_s^M(y)\subset B_{{29}/{8}}^M$ with $s\ge \frac{1}{16}$ we conclude
\begin{equation}\label{freadscdcs}
\frac{\|W\|(\bC_{s}(y,T_y M))}{\omega_m s^m}\in (Q'-\frac{1}{2Q},Q'+\frac{1}{2Q})\, ,    
\end{equation}
 provided that $\eta_0$ and $\delta$ are small enough.
 Take any $B^M_\rho(x)\subset B^M_s(y)\subset B^M_{7/2}$ as above. If $s\ge \frac{1}{16}$, then \eqref{brvfd} follows from \eqref{freadscdcs}. In the case $s<\frac{1}{16}$, we argue as follows. For $l\ge 0$, consider the cylinders $\bC^l\defeq \bC_{1/2^{l+3}}(y,T_y M)$, notice that $B_{1/2^{l+3}}^M(y)\subset B_{{29}/{8}}^M$. We claim that, by induction, \eqref{brvfd} holds for $s=2^{-l-3}$, for every $l\ge 1$ with $B^M_\rho(x)\subset B^M_{2^{-l-3}}(y)$. The base case is observed above. For the inductive step at $l+1$,  we use (the scale-invariant form of) Lemma \ref{vfedacssc} on $ \bC^{l-1}(y)$, with the inductive assumption.
	For general $s$, there exists a unique $l'\in\N$, $l'\ge 1$, with $s\in [2^{-l'-4},2^{-l'-3})$, then we use again Lemma \ref{vfedacssc} on $ \bC^{l'-1}(y)$ together with the claim for $s=2^{-l'-3}$. 
\medskip\\\textbf{Step 2.} From now on we fix a $\mu$ such that the conclusions of the previous step apply and we impose that $\tilde\lambda\le \mu$. Now, since \textbf{Step 1} applies, recall that the integer $Q'$ as in \eqref{brvfd} for $Q=V$ must in fact be $Q$ by \eqref{vfeadsc}, as we have already noticed in the arguments at \textbf{Step 1}.
\medskip\\\textbf{Step 3.}
	Now take $x\in K$ and $x\in B_{s}^M(y)\subset B_3^M$  and consider $B^M_{7s/6}(y)\subset B^M_{7/2}$. We can assume that $\tilde\lambda$ is small enough (depending upon $m$, $n$ and $Q$) to apply   Theorem \ref{height} with $\bar r=\frac{6}{7}$.
	Hence, by  (the scale-invariant version of) Theorem \ref{height} applied to $\bC_{7s/6}(y,T_yM)$ (recalling \textbf{Step 1} for the bound on the measure), there exist $S_1^{y,s},\dots,S^{y,s}_{N^{y,s}}$ open and pairwise disjoint subsets of $T_y^\perp M$, with $\diam(S_h^{y,s})\le D\tilde\lambda^{1/(2m)}s$ (with $D\le C$, but independent of $y$ and $s$) and $N^{y,s}\le Q$, such that 
	\begin{equation}\label{vbgrfsdc}
	\supp(V)\cap\bC_{s}(y,T_yM) \subset\bigcup_{h=1,\dots,N^{y,s}} B_s(y,T_yM) + S_h^{y,s}\,.
	\end{equation}
	We also assume that for every $h$, $	\supp(V)\cap\bC_{s}(y,T_yM) \cap(B_s(y,T_yM) + S_h^{y,s})\ne \emptyset$.
	Set then $V_h^{y,s}\defeq V\mres (B_{s}(y,T_yM) + S_{h}^{y,s})$, notice that $V_h^{y,s}$ is a stationary varifold in $\bC_s(y,T_yM)$.
	Moreover, again by (the scale-invariant version of) \textbf{Step 1} (for both $V_h$ and $V$), we obtain that there exist $Q^{y,s}_h\in\{0,\dots,Q\}$ with   $\sum_h Q_h^{y,s}=Q$ and
	\begin{equation}\label{vfsedac}
		\frac{|V_h^{y,s}|(\bC_\rho(y',T_{y'}M))}{\omega_m \rho^m}\in (Q_h^{y,s}-\textstyle{\frac{1}{2Q}},Q_h^{y,s}+\textstyle{\frac{1}{2Q}})\qquad\text{for every $B^M_\rho(y')\subset B^M_{7s/10}(y) $ intersecting $K$}\,.
	\end{equation}
	Now fix $x\in K$ and let $s\in (0,1)$. Denote by $S_1^{x,s},\dots,S_{N^{x,s}}^{x,s}$ the sets as for \eqref{vbgrfsdc} for $\bC_{s}(x,T_xM)$, and choose points  $p_1^{x,s},\dots,p_{N^{x,s}}^{x,s}\in T_x ^\perp M$  with $p_h^{x,s}\in S^{x,s}_h$ and such that $\supp(V)\cap (B_{s}(x,T_xM)+\{p_h^{x,s}\})\ne\emptyset$. 
	Now observe that, thanks to \eqref{vfsedac}, 
	\begin{equation}\notag
		\dist_H\big(\{p_1^{x,s},\dots,p_{N^{x,s}}^{x,s}\},\{p_1^{x,\rho},\dots,p_{N^{x,\rho}}^{x,\rho}\}\big)\le 2D\tilde\lambda^{1/(2m)} s\qquad\text{for every }\rho\in (0,\textstyle{\frac{7s}{10}}]\,,
	\end{equation}
	so that we have a limit set $\{p^x_1,\dots,p^x_{N^x}\}$, where $N^x\le Q$, and $(x,p_h^x)\in\supp(V)$ for every $h$. Here and in the rest of this step, we write often $(x,y)$ for the point $(x+y)\in T_xM\times T_x^\perp M$. Later in the proof, especially in \textbf{Step 4}, we are going also to regard $p_h^x$ as a point of $\R^{m+n}$, hence, depending on the context, we use both notations $(x,p_h^x)$ and by $x+p_h^x$ to identify the same point.
	
	Now take $\bar s\in (0,1)$ small enough so that $B_{4D\tilde\lambda^{1/(2m)} \bar s}(p^x_h)$ are pairwise disjoint.  Take $s\in (0,\bar s)$ and consider $S_1^{x,s},\dots,S_{N^{x,s}}^{x,s}$. Take $h$ and consider $V^{x,s}_h$, notice that there exists a set $T_h^{x,s}$ with $\supp(V_h)\subset  B_{s}(x,T_xM)+ T_h^{x,s}$ and $\overline{T_h^{x,s}}\subset S_h^{x,s}$. By \eqref{vfsedac}, and exploiting $T_h^{x,s}$, we find a point  
    \[
    (x,\bar p)\in (\{x\}+\overline{T_h^{x,s}})\cap\supp(V^{x,s}_h)\subset\supp(V)\cap S_h^{x,s}\, ,
    \]
    which forces $\bar p=p^x_{h'}$ for some $h'$. We have thus seen that, for every $h$, there exists a unique $h'$ (as $|p^x_{h'}-p^x_{h''}|\ge 8D\tilde\lambda^{1/(2m)}\bar s>\diam(S^{x,s}_h)$ for $h''\ne h'$) such that $p^x_{h'}\in S^{x,s}_h$. Conversely, for every $k$, there exists a unique $k'$ such that $p^x_{k}\in S^{x,s}_{k'}$, as $p^x_{k}\in\supp(V)$. Hence we have a one-to-one correspondence between the points  $\{p^x_1,\dots,p^x_{N^x}\}$ and the sets $S_1^{x,s},\dots,S_{N^{x,s}}^{x,s}$. Hence, to ${p^x_h}$, we associate $Q_{h'}^{x,s}$ (for ${p^x_h}\in S^{x,s}_{h'}$) as the relevant integer as in \eqref{vfsedac}. Notice that \eqref{vfsedac} implies that this choice is well posed, i.e.\ independent of $s$, and recall that $\sum_h Q_h^{x,s}=Q.$ Therefore, we can define $f(x)\defeq \sum_h Q^{x,s}_h\a{p_h^{x}}\in\Iq(T_x^\perp M)$.  This implies clearly statement (a) of Proposition \ref{Lipapprox} and the estimate \eqref{cdsacads} on $K$. Notice also that the above argument implies $\supp(V)\cap(\{x\}\times T_x^\perp M )= \bigcup_h (x,p_h^x)$, in particular we have also covered statement (c).
            \medskip\\\textbf{Step 4.}
            In this step we show that 
            \[
            \cG(f(x_1),f(x_2))\le C(\tilde\lambda^{1/(2m)}+\delta)|x_1-x_2|\qquad \mbox{for every $x_1,x_2\in K$}\,.
            \]
            This proves, in particular, the resired Lipschitz estimate \eqref{Lipdcsa} for $f$ on $K$. 
            
            Set $\bar r\defeq |x_1-x_2|$ and let $\bar x$ be such that  $x_1,x_2\in B_{ 6\bar r/10}^M(\bar x)\subset B_{\bar r}^M(\bar x)\subset B_3^M$. 
			We use now \textbf{Step 3}, with the same notation, for $B_{\bar r}^M(\bar x)$. Notice first that 
            \begin{equation}\label{cdscsacs}
                (B_{\bar r/20}(x_1,T_{x_1}M)+ T_{x_1}^\perp M) \cap\supp(V) \subset B_{7/10\bar r}(\bar x,T_{\bar x}M)+ T_{\bar x}^\perp M\,.
            \end{equation}
            Indeed, take $z\in (B_{\bar r/20}(x_1,T_{x_1}M)+ T_{x_1}^\perp M) \cap\supp(V)$. We compute then 
            \begin{equation}\notag
                |p_{T_{\bar x}M}(z-\bar x)|\le |p_{T_{\bar x}M}-p_{T_{x_1}M}||z-\bar x|+|p_{T_{ x_1}M}(z- x_1)|+|p_{T_{ x_1}M}(\bar x- x_1)|\le \textstyle{\frac{7\bar r}{10}}
            \end{equation}
            provided that $\delta$ is small enough. 
            Consider the stripes $(S^{\bar x,\bar r}_{h})_h$ and the associated varifolds  $(V_h^{\bar x,\bar r})_h$. For any $h$, from \eqref{vfsedac} we know both that 
            \begin{gather}\label{vefaddvascac}
                \frac{|V_h^{\bar x,\bar s}|(\bC_{7/10\bar r}(\bar x,T_{\bar x}M))}{\omega_m (7/10\bar r)^m}\in (Q_h^{\bar x,\bar r}-\textstyle{\frac{1}{2Q}},Q_h^{\bar x,\bar r}+\textstyle{\frac{1}{2Q}})\,,
        \\\label{vefaddvascac1}
            \frac{|V_h^{\bar x,\bar s}|(\bC_{s}(x_1,T_{x_1}M))}{\omega_m s^m}\in (Q_h^{\bar x,\bar r}-\textstyle{\frac{1}{2Q}},Q_h^{\bar x,\bar r}+\textstyle{\frac{1}{2Q}})\qquad\text{for every }s\in (0,\textstyle{\frac{\bar r}{20}})\,.
            \end{gather}
            Now, take any $p^{x_1}_{k}$. 
            By \eqref{cdscsacs}, we know that $x_1+p^{x_1}_{k}\in B_{7/10 \bar r}(\bar x,T_{\bar x}M)+ S^{\bar x,\bar r}_{h(k)}$, in particular, $x_1+p^{x_1}_{k}\in\supp(V_{h(k)}^{\bar x,\bar s})$, thus defining the function $k\mapsto h(k)$.
            Now notice that, by the construction of \textbf{Step 3} and \eqref{vefaddvascac}, \eqref{vefaddvascac1}, if we fix $\bar h$, then
            \begin{equation}
            \sum_{h: h(k)=\bar h} Q_k^{x_1}=Q_{\bar h}^{\bar x,\bar r}\,.\notag
            \end{equation}
			Of course, the same consideration holds for $x_2$. Hence, 
            \begin{equation}\notag
                \cG(f(x_1),f(x_2))\le C\sup_{ h(k_1)=h(k_2)}|p^{x_1}_{k_1}-p^{x_2}_{k_2}|\,.
            \end{equation}           
             Take then $p^{x_1}_{k_1},p^{x_2}_{k_2}$ such that $h(k_1)=h(k_2)$, we compute
            \begin{align*}
                |p^{x_1}_{k_1}-p^{x_2}_{k_2}|&\le |p_{T_{\bar x}^\perp M }(p^{x_1}_{k_1}-p^{x_2}_{k_2})|+|p_{T_{\bar x}M }(p^{x_1}_{k_1}-p^{x_2}_{k_2})|\\
                &\le |p_{T_{\bar x}^\perp M }(x_1+p^{x_1}_{k_1}-x_2-p^{x_2}_{k_2})|+|p_{T_{\bar x}^\perp M }(x_1-x_2)|+|p_{T_{\bar x}M }(p^{x_1}_{k_1})|+|p_{T_{\bar x}M }(p^{x_2}_{k_2})|\\
                &\le  D\tilde\lambda^{1/(2m)}\bar r+C\delta\bar r^2+|p_{T_{\bar x}M }-p_{T_{x_1}M}||p^{x_1}_{k_1}|+|p_{T_{\bar x}M }-p_{T_{x_2}M}||p^{x_2}_{k_2}|\\
                &\le C(\tilde\lambda^{1/(2m)}+\delta)\bar r\,.
                \end{align*}
        Now that we have achieved the correct bounds,
        we can extend the Lipschitz function to $f:B_1^M\rightarrow \Iq(\R^{m+n})$ with $f(x)\perp T_xM$ for every $x$. This is done as in \cite{DS2}. Notice that this extension maintains \eqref{Lipdcsa} and \eqref{cdsacads}.
    \medskip\\\textbf{Step 5.} We prove \eqref{measureestimate}.
		Take $y\in B_1^M\setminus K$. Take $y\in B_{r'}^M(y')\subset B_{4}^M$ such that  $\bE(V,\bC_{r'}(y',T_{y'}M))>\tilde\lambda$ of almost maximal size, in the sense that 
		\begin{equation}\notag
			r'>1/2 \sup\{r'': y\in B_{r''}^M(y'')\subset B_{4}^M: \bE(V,\bC_{r''}(y'',T_{y''}M))>\tilde\lambda\}\,.
		\end{equation}
		If  $r'\ge \frac{1}{10}$, from
		\begin{equation}\notag
			\tilde\lambda<\bE(V,\bC_{r'}(y',T_{y'}M))\le \frac{\omega_m {5}^m \bE(V,\bC_{5},T_{y'}M)}{\omega_m (r')^m}\le 5^m\frac{\  \bE+\|V\|(\bC_{5})\delta^2}{\ (r')^m} \,
		\end{equation}
		we infer that $\lambda \bE^{-1}\le C$, hence \eqref{measureestimate} is trivial. Therefore, we can assume that $r'<\frac{1}{10}$, so that $B_{5r'}^M(y')\subset B_{{7}/{2}}^M$. This means that for every $B_t^M(z)$ with $B_{5r'}^M(y')\subset B_{t}^M(z)\subset B_4^M$ we have $\bE(V, \bC_{t}(z,T_z M))\le \tilde\lambda$. By \textbf{Step 1}, we have that 
		\begin{equation}\notag
			\frac{\|V\|(\bC_{5r'}(y',T_{y'}M))}{\omega_m (5r')^m}\le Q+\textstyle{\frac{1}{2}}\,.
		\end{equation}
		In particular, for every $y\in B_1^M\setminus K$, we can find $y\in B_{r'}^M(y')\subset B_4^M$ with $\bE(V,\bC_{r'}(y',T_{y'}M))>\tilde \lambda$ and $\|V\|(\bC_{5r'}(y',T_{y'}M))\le \omega_m(5r')^m(Q+1/2)$. Recalling \eqref{fdvsacasdca} for the last inequality,
  \begin{align*}
      \tilde\lambda& \le \frac{2}{\omega_m (r')^m}\int_{\bC_{r'}(y',T_{y'}M)} |T_z -T_{p_M(z)}M|^2\,\dd\|V\|(z)+\frac{2}{\omega_m (r')^m}\int_{\bC_{r'}(y',T_{y'}M)} |T_{p_M(z)}M -T_{y'}M|^2\,\dd\|V\|(z)\\
      &\le \frac{2}{\omega_m (r')^m}\int_{\bC_{r'}(y',T_{y'}M)} |T_z -T_{p_M(z)}M|^2\,\dd\|V\|(z)+2\omega_m(Q+1/2)5^m  \delta^2\,,
  \end{align*}
  so that, if we define the measure $\mu\defeq (p_M)_* \big(|T_z -T_{p_M(z)}M|^2\,\dd\|V\|(z)\big)$, 
  \begin{equation}\notag
      \lambda\le C\frac{1}{\omega_m (2r')^m}\int_{p_M^{-1}(B_{2r'}^M(y'))} |T_z -T_{p_M(z)}M|^2\,\dd\|V\|(z)=C\frac{\mu (B_{2r'}(y'))}{\omega_m (2r')^m}\,,
  \end{equation}
In particular, \eqref{measureestimate} follows from a standard covering argument.
\medskip\\\textbf{Step 6.} We prove  \eqref{densdvacs}.  We are going to use the same notation as in \textbf{Step 3}. Take $s$ so small that $B_{4D\tilde\lambda^{1/(2m)} s}(p_h^s)$ are pairwise disjoint, fix $z=(x,y)$ and $h$ such that $p_h^x=y$. 
If $s'\in (0,\frac{7s}{8})$, we have trivially
\begin{equation}\notag
	\frac{\|V_h^{x,s}\|(\B_{s'}(z))}{\omega_m {(s')}^m} \le \frac{\|V_h^{x,s}\|(\bC_{s'}(x,T_xM))}{\omega_m {(s')}^m}\,.
\end{equation} 
On the other hand, let $s'\in (0,\frac{7s}{8})$ be so small so that $\sqrt{1+D^2\tilde\lambda^{1/m}}s'<\frac{7s}{8}$. We have by \textbf{Step 3} that $\supp(V_h^{x,s})\cap \bC_{s'}(x,T_xM)\subset B_{s'}(x,T_xM)+ S_{h'}^{x,s'}$, with $\diam(S^{x,s'}_{h'})\le D\tilde\lambda^{1/(2m)}s'$, hence
\begin{equation}\notag
	\frac{\|V_h^{x,s}\|(\bC_{s'}(x,T_xM))}{\omega_m {(s')}^m}\le  \frac{\|V_h^{x,s}\|\Big(\B_{\sqrt{1+D^2\tilde\lambda^{1/m}}s'}(z)\Big)}{\omega_m {(s')}^m}=\frac{\|V_h^{x,s}\|\Big(\B_{\sqrt{1+D^2\tilde\lambda^{1/m}}s'}(z)\Big)}{\omega_m {(\sqrt{1+D^2\tilde\lambda^{1/m}}s')}^m} (1+D^2\tilde\lambda^{1/m})^{m/2}\,.
\end{equation}
We can let now $s'\downarrow 0$, recalling \eqref{vfsedac}, to infer that
\begin{equation}\notag
\Theta(z)\le Q^{x}_h+\textstyle{\frac{1}{2Q}}\quad\text{and}\quad	Q_h^x-\textstyle{\frac{1}{2Q}}\le \Theta(z)(1+D^2\tilde\lambda^{1/m})^{m/2}\,.
\end{equation}
Recalling that by assumption $\Theta(z)$ is an integer, we see that $\Theta(z)=Q_h^{x}$, provided $\tilde\lambda$ is small enough (depending only upon $m$, $n$ and $Q$).

\section{Proof of the decay lemma}

This section is devoted to prove Lemma \ref{decaylemma}. Keeping with the notation used so far, $C$ will denote a generic constant which depends only on $m$, $n$ and $Q$. 
Note that, up to translations we can assume $0\in M$ and keeping with the convention used so far we omit $0$ when it is the center of a ball, so that in particular $B_r^M$ denotes the geodesic ball in $M$ of center $0$ and radius $r$.

We collect first a couple of useful remarks.
By Propositions \ref{prop:caccioppoli} and \ref{prop:altezza}, we have
\[
\frac{1}{\omega_m 50^m}\int_{\bC_{50}} |T_z V-T_{p_M(z)}M|^2 \, \dd\|V\|(z) \le C\boldsymbol{\eps}^2\quad\text{and}\quad d_M\le C\boldsymbol{\eps} \text{ in }\bC_{50}\cap  \supp(V)\,.
\]
In particular,
\begin{equation}\notag
\supp (V)\cap \bC_{50}\subset \{(x,y):|x|<50,|y|<C(\delta+\boldsymbol{\eps})\}\,.   
\end{equation}
and
\begin{equation}\notag
\frac{1}{\omega_m 50^m}\int_{\bC_{50}} |T_z V-\pi_0|^2 \, \dd\|V\|(z)\le C(\delta^2+\boldsymbol{\eps}^2)\,.
\end{equation}
Therefore Remark \ref{portoavantiipotesi} follows from Lemma \ref{vfedacssc}, provided that $\eps_0$ and $\delta_0$ are small enough.

\subsection{The Lipschitz approximation and its estimates}\label{cfdesvac} 
We fix a $\lambda_0\in(0,1)$, whose choice will be given at the end of this step, depending only on  $m$, $n$ and $Q$.
We apply the Lipschitz approximation Theorem \ref{Lipapprox} (in scale-invariant form) with $\lambda = \lambda_0$. We thus have a $Q$-valued $C(\lambda_0+\delta^2)^{\sfrac{1}{2m}}$-Lipschitz normal section
\[
f\colon B^M_{10}\to \Iq(\R^{n+m})\,
\]
(in particular, for $x\in B^M_{10}$  we have $f(x)=\sum_{i=1}^Q \a{f_i(x)}$, with $f_i(x)\in T_x^\perp M$ for every $i=1,\dots,Q$) and an associated Borel set $K\subset B_{10}^M$ satisfying the estimate $|B_{10}^M\setminus K|+\|V\|(p_M^{-1}(B_1^M)\setminus K)\le C\lambda_0^{-1} \boldsymbol{\eps}^2$.
We require that $\delta_0$ is also small enough, depending on $m$, $n$ and $Q$, so that $\bar f$ will in fact be assumed to be $1$-Lipschitz (i.e.\ $C(\lambda_0+\delta_0^2)^{\sfrac{1}{2m}} <1$). 

The aim of this step is to show that, for any $\varphi\in C_c^1(B_{10}^M,\nu M)$,
\begin{equation}\label{toharm1}
		\bigg|\int_{B_{10}^M} \LL_M\bar f \cdot\varphi\,\dd\mathcal{H}^m \bigg|\le C \| D\varphi\|_{L^\infty}\boldsymbol{\eps}^2\,,
\end{equation}
where $\bar f$ denotes the average of $f$ ($\boldsymbol{\eta}\circ f$ in the notation of \cite{Almgren00} and \cite{DSq}), namely $\bar f(x)\defeq\tfrac{1}{Q}\sum_j f_j (x)$.
Along the way, we are going to show also that 
\begin{equation}\label{vefdsvadscvas}
    \int_{B_{10}^M}| D f|^2\,\dd\mathcal{H}^m\le C\boldsymbol{\eps}^2\,.
\end{equation}
We denote by $\Gamma_f$ the integral varifold given by the (multi)graph of $f$.
As we are going to use the claim of \cite[Lemma 1.1]{DSq} repeatedly, we recall its statement, in our context. Namely,  we have a sequence of pairwise disjoint Borel subsets of $B_{10}^M$, $\{E_h\}_h$, with $\mathcal{H}^m(B_{10}^M\setminus \bigcup_h E_h)=0$, such that 
\begin{enumerate}
	\item for every $h$, $f|_{E_h}=\sum_{i=1}^Q \a{f^h_i}$, with $f^h_i$ $C (\lambda_0+\delta^2)^{1/(2m)}$-Lipschitz on $E_h$ for every $i=1,\dots,Q$,
	\item for every $h$ and $i,i'\in \{1,\dots,Q\}$, either $f^h_i=f^h_{i'}$ on $E_h$, or $f^h_i\ne f^h_{i'}$ for every $x\in E_h$,
	\item for every $h$, $ D f(x)=\sum_{i=1}^Q \a{ D f^h_i}$ for a.e.\ $x\in E_h$.
\end{enumerate}
Now notice that for $\mathcal{H}^m$-a.e.\ $z\in\supp(\Gamma_f)$, if we write $f(p(z))=\sum_j \a{p_j}$, then
\begin{equation}\label{vfdsacxa}
	\Theta(\Gamma_f,z) =\sum_{j: p_j=z-p(z)} Q_j\,.
\end{equation}
This follows from \cite[Proposition 1.4]{DSsns} together with what remarked just after \cite[Definition 1.10]{DSsns}.
In particular, recalling also \eqref{densdvacs},
\begin{equation}\label{fadc}
	\Theta(\Gamma_f,z) =\Theta(z) \qquad\text{for $\mathcal{H}^m$-a.e.\ $z\in p^{-1}(K)\cap\supp(V)$}\,,
\end{equation}
and we recall also that, by differentiation of measures,
\begin{equation}\label{fadc1}
	T_z\Gamma_f=T_z V \qquad\text{for $\mathcal{H}^m$-a.e.\ $z\in p^{-1}(K)\cap\supp(V)$}\,.
\end{equation}

Let now $\varphi\in C_c^1(B_{10}^M,\nu M)$. With an harmless abuse, we test the variation of $V$ with the vector field $X(x)\defeq\varphi(p(z))$, so $\delta V(X)=0$, being $ V$ stationary. Therefore,
\begin{equation}\label{cdsaasc2}
\begin{split}
	\bigg|	\int \Div_{\Gamma_f}(X)\,\dd|\Gamma_f|\bigg|&\le C\| D \varphi\|_{L^\infty} \big(\|V\|(p^{-1}(B_{10}^M\setminus K))+Q\mathcal{H}^m\mres\Gamma_f( p^{-1}(B_{10}^M\setminus K) )\big)\\&\le C \| D \varphi\|_{L^\infty} \lambda_0^{-1} \boldsymbol{\eps}^2\,,
\end{split}
\end{equation}
where we used  \eqref{fadc} and  \eqref{fadc1} to deal with the portion on $p^{-1}(K)$, and the last inequality is by \eqref{measureestimate}.

Now, we concentrate on a set $E_h$, and fix $\hat f=f^h_i$ for some $i\in\{1,\dots,Q\}$. We will implicitly extend $\hat f$ to be defined on $B_{10}^M$, and still $C(\lambda_0+\delta^2)^{1/(2m)}$-Lipschitz. We define $\widehat \Gamma$ as the graph of $\hat f$.
By the area formula,
\begin{equation}\notag
	\int_{p^{-1}(E_h)}\Div_{\widehat\Gamma}(X)\,\dd\mathcal{H}^m\mres \widehat\Gamma=\int_{E_h}\Div_{\widehat\Gamma}(X)(x+\hat f(x)) J\hat f(x)\,\dd \mathcal{H}^m(x)\,,
\end{equation}
where $J_{\hat f}$ denotes the area factor of the map $x\mapsto x+\hat f(x)$. Now notice that
\begin{align}
\label{vefcdsxa}
	C\delta_0^2\boldsymbol{\eps}^2+C|T_{x}M-T_{x+\hat f(x)}\widehat\Gamma|^2\ge |D \hat f(x)|^2\qquad&\text{for $\mathcal{H}^m$-a.e.\ $x\in B_{10}^M$}\,,\\
        \label{areafactor}
	|J\hat f(x)-1|\le C|D \hat f(x)|^2\qquad&\text{for $\mathcal{H}^m$-a.e.\ $x\in B_{10}^M$}\,,
\end{align}
provided that   $\lambda_0$ and $\delta_0$ are small enough, depending on $m$ and $n$.
Indeed,  \eqref{vefcdsxa} is proved in Lemma \ref{lem:geometriclin}, whereas \eqref{areafactor}, follows from the Taylor expansion of the area factor. 
We thus deduce from the area formula and \eqref{areafactor} that
\begin{align*}
	&\bigg| \int_{E_h}\Div_{\widehat\Gamma}(X)(x+\hat f(x))\dd\mathcal{H}^m(x)- \int_{p^{-1}(E_h)}\Div_{\widehat\Gamma}(X)\,\dd\mathcal{H}^m\bigg| \\
    &\qquad\qquad\le    \bigg| \int_{E_h}|\Div_{\widehat\Gamma}(X)(x+\hat f(x))||J\hat f(x)-1|\dd\mathcal{H}^m(x)\bigg|
   \le C\| D\varphi\|_{L^\infty}\int_{E_h}| D \hat f|^2\,.
\end{align*}
We use now Lemma \ref{lem:geometriclin}  (provided that $\lambda_0$ and $\delta_0$ are small enough, depending on $m$ and $n$) to obtain that
\begin{equation}\label{vfjdnvsas}
\begin{split}
       & \bigg| \int_{E_h} D \hat f:D\varphi-2\sff_{\varphi}:\sff_{\hat f}\,\dd\mathcal{H}^m- \int_{E_h}\Div_{\widehat\Gamma}(X)(x+\hat f(x))\dd\mathcal{H}^m(x)\bigg|\\
   &\qquad\qquad\le C\| D\varphi\|_{L^\infty}\int_{E_h}\delta_0^2 |\hat f(x)|^2+|T_xM-T_{x+\hat f(x)}\widehat\Gamma|^2\,\dd\mathcal{H}^m(x)\,.
   \end{split}
\end{equation}
Now the choice of $\lambda_0$ can be fixed. Hence, all the dependencies on $\lambda_0$ can be absorbed in the constant $C$.

From the two set of inequalities above, recalling that $|\hat f|\le C\boldsymbol{\epsilon}$, we deduce that
\begin{align*}
   & \bigg| \int_{E_h} D \hat f :D\varphi-2\sff_{\varphi}:\sff_{\hat f}\,\dd\mathcal{H}^m- \int_{p^{-1}(E_h)}\Div_{\widehat\Gamma}(X)\,\dd\mathcal{H}^m\bigg|\\
   &\qquad\qquad\le C\| D\varphi\|_{L^\infty}\Big(\delta_0^2\boldsymbol{\eps}^2\mathcal{H}^m(E_h)+\int_{E_h}|T_xM-T_{x+\hat f(x)}\widehat\Gamma|^2\,\dd\mathcal{H}^m(x)+| D \hat f|^2\,\dd\mathcal{H}^m(x)\Big)\,.
\end{align*}
We recall that we chose $\hat f= f_i^h$. We next use \eqref{vefcdsxa}, the area formula (as $J {f_i^h}\ge 1/2$),  \eqref{vfdsacxa}, \eqref{fadc}, \eqref{fadc1}, and \eqref{measureestimate}, to compute
\begin{align}
       \int_{B_{10}^M}| D f|^2\,\dd\mathcal{H}^m&= \sum_{h}\sum_{i=1}^Q \int_{E_h}| D f_i^h|^2\,\dd\mathcal{H}^m\nonumber\\
    &\le \sum_h C\delta_0^2\boldsymbol{\eps}^2|\mathcal{H}^m(E_h)|+C\sum_h \sum_{i=1}^Q\int_{E_h}
	|T_{x}M-T_{x+f_{i}^h(x)}\Gamma_{f_i^h}|^2\dd\mathcal{H}^m(x)\nonumber\\
    &\le C\delta_0^2\boldsymbol{\eps}^2+C \sum_h\sum_{i=1}^Q\int_{p^{-1}(E_h)} |T_{p(z)}M-T_z\Gamma_{f_i^h}|^2\,\dd\mathcal{H}^m(z)\nonumber\\
    &\le C\delta_0^2\boldsymbol{\eps}^2+C \int_{p^{-1}(B_{10}^M)} |T_{p(z)}M-T_z\Gamma_f|^2\,\dd|\Gamma_f|(z)\nonumber\\
    &\le C\delta_0^2\boldsymbol{\eps}^2+  \int_{p^{-1}(B_{10}^M\cap K)}|T_{p(z)}M-T_{z}\Gamma_{ f}|^2\,\dd|\Gamma_f|(z)+\int_{p^{-1}(B_{10}^M\setminus K)}|T_{p(z)}M-T_{z}\Gamma_{ f}|^2\,\dd|\Gamma_f|\nonumber\\
&\le C\delta_0^2\boldsymbol{\eps}^2+   \int_{p^{-1}(B_{10}^M\cap K)}|T_{p(z)}M-T_{z}V|^2\,\dd\|V\|(z)+C|B_{10}^M\setminus K|\le C\boldsymbol{\eps}^2\,,\label{cdsaasc22}
\end{align}
which proves \eqref{vefdsvadscvas}.

We sum \eqref{vfjdnvsas} for $i=1,\dots,Q$ and then on $h$, using also (intermediate steps of) \eqref{cdsaasc22}, to obtain that
\begin{equation}
    \begin{split}\label{cdsaasc}
        &\bigg| Q\int_{B_{10}^M} \big(D \bar f :D\varphi-2\sff_{\varphi}:\sff_{\bar f}\big)\,\dd\mathcal{H}^m- \int_{p^{-1}(B_{10}^M)}\Div_{\Gamma_{ f}}(X)\,\dd|\Gamma_f|\bigg|\le  C\| D\varphi\|_{L^\infty}\boldsymbol{\eps}^2\,,
    \end{split}
\end{equation}
where we took into account  \eqref{vfdsacxa}.
Combining the outcomes of \eqref{cdsaasc2} and \eqref{cdsaasc}, we finally prove \eqref{toharm1}.

\subsection{Proof of Lemma  \ref{verfdsavdsc}}
To show that \eqref{verfdsavdsc1} follows, we compute
\begin{equation}\notag
    \int_{B_{10}^M}d_{\supp(V)}^2\,\dd\mathcal{H}^m\le \int_{K}d_{\supp(V)}^2\,\dd\mathcal{H}^m+\int_{B_{10}^M\setminus K}d_{\supp(V)}^2\,\dd\mathcal{H}^m\le C\int_{p^{-1}(K)} d^2_M \,\dd\|V\|+C\boldsymbol{\eps}^2\le C\boldsymbol{\eps}^2\,,
\end{equation}
where we used that  for every $x\in K$, $d_{\supp(V)}(x)\le |f(x)|\le C d_M(y)$ for some $y\in \supp(V)\cap p^{-1}(K)$, and the area formula.

Now, we use Proposition \ref{prop:altezza} with $M''$ to obtain that $\sup_{\bC_{1/2}(y)\cap\supp(V)}d_{M''}\le C\sigma$. Notice that for every $x\in K\cap \bC_{1/3}(y)$, there exists $x'\in\supp(V)$ with $|x-x'|\le C\boldsymbol{\eps}$ (by the $L^\infty$ bound for $f$), in particular $x'\in\supp(V)\cap \bC_{1/2}(y)$ (if $\eps_0$ is small enough), so that there exists $x''\in M''$ with $|x'-x''|\le C\sigma$. Therefore,
\begin{equation}\notag
    \int_{M\cap \bC_{1/3}(y)}d_{M''}^2\,\dd\mathcal{H}^m\le \int_{K\cap \bC_{1/3}(y)}d_{M''}^2\,\dd\mathcal{H}^m+\int_{B_{2}^M\setminus K}d_{M''}^2\,\dd\mathcal{H}^m\le C(\boldsymbol{\eps}+\sigma)^2\,.
\end{equation}
Now notice that
\begin{equation}\notag
    \|g-g''\|_{L^2(B_{1/4}(y))}^2\le C(\boldsymbol{\eps}+\sigma)^2\,.
\end{equation}
This follows from the inequality above together with Lemma \ref{lem:distlipgraphpc} below, and the area formula.
Therefore, \eqref{verfdsavdsc2} follows from the theory of elliptic PDEs. Indeed, $g-g''$ solves a linear elliptic system with smooth coefficients whose norms are bounded by functions of $\delta$.

\begin{lem}\label{lem:distlipgraphpc}
    Let $f\colon B_1\to\R^{n}$ be a $1$-Lipschitz function and let $\Gamma$ be its graph. Then
    \begin{equation*}
         d_\Gamma(z)\le |z-z_1-f(z_1)|\le 3 d_\Gamma(z)\qquad \text{for all }z=(z_1,z_2)\in B^m_{1/4}+ B^n_1\,.
    \end{equation*}
\end{lem}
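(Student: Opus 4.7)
\textbf{Plan of proof for Lemma \ref{lem:distlipgraphpc}.} Write $z = (z_1, z_2)$ with $z_1 \in B^m_{1/4}$ and $z_2 \in B^n_1$, and note that the point $z_1 + f(z_1) = (z_1, f(z_1))$ lies on $\Gamma$ because $z_1 \in B^m_{1/4} \subset B_1$. The lower bound $d_\Gamma(z) \le |z - z_1 - f(z_1)|$ is then immediate from the definition of the distance function, observing that $z - z_1 - f(z_1) = (0, z_2 - f(z_1))$, so the quantity on the right is simply $|z_2 - f(z_1)|$.

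For the upper bound, the strategy is to bound $|z_2 - f(z_1)|$ against the distance from $z$ to an arbitrary point on $\Gamma$, and then take the infimum. Pick any $x \in B_1$. By the triangle inequality and the $1$-Lipschitz property of $f$,
\[
|z_2 - f(z_1)| \le |z_2 - f(x)| + |f(x) - f(z_1)| \le |z_2 - f(x)| + |x - z_1|.
\]
Squaring and using $(a+b)^2 \le 2(a^2+b^2)$ gives
\[
|z_2 - f(z_1)|^2 \le 2\bigl(|z_2 - f(x)|^2 + |x - z_1|^2\bigr) = 2\,|z - (x, f(x))|^2.
\]
Taking the infimum over $x \in B_1$ on the right yields $|z_2 - f(z_1)| \le \sqrt{2}\, d_\Gamma(z) \le 3\,d_\Gamma(z)$, which is the claimed upper bound.

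There is essentially no obstacle: the argument is a direct triangle-inequality computation combined with the Lipschitz bound on $f$. The only thing to verify is that $z_1$ actually lies in the domain of $f$, which is guaranteed by the assumption $z_1 \in B^m_{1/4}$; note that no constraint on $x$ beyond $x \in B_1$ is needed, since the Lipschitz estimate $|f(x) - f(z_1)| \le |x - z_1|$ is uniform on $B_1$. The generous constant $3$ in the statement (in place of the sharp $\sqrt 2$) leaves room should a cruder bound be preferred in applications.
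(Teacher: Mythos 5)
Your proof is correct and complete, and in fact gives the sharper constant $\sqrt{2}$ in place of $3$. The paper explicitly leaves this lemma's proof to the reader as elementary, so there is no reference argument to compare against; your triangle-inequality-plus-Lipschitz computation is exactly the natural one. The restriction $z_1 \in B^m_{1/4}$ in the statement is not needed for your argument (only $z_1 \in B_1$ is, to ensure $(z_1, f(z_1)) \in \Gamma$); it is present merely to match the domain on which the lemma is applied in the proof of Lemma \ref{verfdsavdsc}.
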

We are not going to prove Lemma \ref{lem:distlipgraphpc}, as its proof is elementary.

\subsection{Elliptic regularization of the average} 
We proved in the previous step that the size of $\LL_M \bar{f}$ is bounded by $\boldsymbol{\eps}^2$ in a weak norm. It is convenient to replace $\bar{f}$ with a solution $\tilde{f}$ of $\LL_M \tilde{f}=0$, which we will show to be $L^2$-close to $\bar{f}$. Due to technical reasons, we actually get $O(\boldsymbol{\eps}^{3/2})$ closeness, which is still good enough for our purposes.

We in fact let $\tilde f \colon B^M_{10} \to \R^{n+m}$ as the normal section such that 
\begin{equation*}
    \LL_M \tilde f =0 \text{ in }B^M_{10} \quad \text{and}\quad \tilde f=\bar f \text{ on } \de B^M_{10}\,.
\end{equation*}
Applying the maximum principle to $|\tilde f |^2$ (see Lemma \ref{lem:harmrepl}) we find $|\tilde f|\le C\boldsymbol{\eps}$, which we claim implies
\begin{equation}\label{eq:hg}
    \|\tilde f - \bar f \|_{L^2(B^M_{10})} \le C\boldsymbol{\eps}^{3/2}\,.
\end{equation}
In order to prove \eqref{eq:hg}, we test the equation with the normal section $\varphi_0\in H^1_0(B^M_{10},\R^{m+n})$ such that $\LL_M \varphi_0= \bar f -\tilde f$ and use \eqref{toharm1} and $W^{2,p}$ estimates up to the boundary for linear elliptic systems (see Lemma \ref{lem:W2pbounds}). Hence,
\begin{align*}
    \|\tilde f-\bar f \|^2_{L^2(B^M_{10})}&= \langle \bar f -\tilde f,\LL_M  \varphi_0 \rangle = \langle \LL_M\bar f,  \varphi_0 \rangle \\
    &\le C\boldsymbol{\eps}^2 \| \varphi_0\|_{C^1(B^M_{10})}\le C\boldsymbol{\eps}^2\|\LL_M \varphi_0\|_{L^\infty(B^M_{10})}\le C\boldsymbol{\eps}^2\|\bar f- \tilde f\|_{L^\infty(B^M_{10})}\le C\boldsymbol{\eps}^3\,.  
\end{align*}
We denote by $\overline{\Gamma}$ the graph of $\bar f$ (that is the set
$\overline{\Gamma} \defeq\{ x+ \bar f(x) : x\in B^M_{10}\}$)
and similarly by $\widetilde{\Gamma}$ the graph of $\tilde f$. Then the $L^2$ bound \eqref{eq:hg} implies that
\begin{equation}\label{eq:gammabargammatilde}
\int_{\bC_{9}}d^2_{\widetilde{\Gamma}}\, \dd\|V\| \le C\int_{\bC_{9}}d^2_{\overline{\Gamma}}\,  \dd\|V\| + C\boldsymbol{\eps}^3\,.
\end{equation}
Indeed, as $\bar f$ is $1$-Lipschitz and for all $z\in \{|x|\le 9,|y|\le1\}$ one has
\[
d_{\widetilde{\Gamma}}(z)\le |z-p(z)-\tilde f(p(z))| \le  |z-p(z)-\bar f(p(z))| + |\tilde f(p(z))-\bar f(p(z))|\le 3d_{\overline{\Gamma}}(z)+|\tilde f(p(z))-\bar f(p(z))|\,,
\]
where $|z-p(z)-\bar f(p(z))| \le 3 d_{\overline{\Gamma}}(z)$ follows from Lemma \ref{lem:distlipgraph} below.
We integrate this inequality with respect to $\dd\|V\|$ on $E\defeq \Gamma\cap\bC_{9}$ and using the area formula on $p$, finding
\begin{align*}
    \int_{\Gamma\cap\bC_{9}}d^2_{\widetilde{\Gamma}}\, \dd\|V\|&\le 9\int_{\Gamma\cap\bC_{9}}d^2_{\overline{\Gamma}}\, \dd\|V\| + 9 \int_{\Gamma\cap\bC_{9}}|\tilde f(p(z))-\bar f(p(z))|^2 \, \dd\|V\|\\
    &\le 9\int_{\Gamma\cap\bC_{9}}d^2_{\overline{\Gamma}}\, \dd\|V\| +C \int_{p(\Gamma\cap\bC_{9})}|\tilde f(x)-\bar f(x)|^2 \,\dd\mathcal{H}^m\le 9\int_{\Gamma\cap \supp(V)}d^2_{\overline{\Gamma}}\, \dd\|V\| +C \boldsymbol{\eps}^3\,, 
\end{align*}
where we used $p(\Gamma\cap\bC_{9})\subset B^M_{10}$. On the piece $(\supp (V)\setminus \Gamma)\cap \bC_{9}$ we combine the $L^\infty$ estimate on $d_{\widetilde \Gamma}$ and the measure one:
\begin{align}\label{eq:pippo}
    \int_{\supp (V)\setminus\Gamma}d^2_{\widetilde{\Gamma}}\, \dd\|V\|&\le C \boldsymbol{\eps}^2 \cH^m(\supp (V)\setminus\Gamma ) \le C\boldsymbol{\eps}^4\,.
\end{align}
\begin{lem}\label{lem:distlipgraph}
    Let $f\colon B^M_{10}\to\R^{m+n}$ be a $1$-Lipschitz normal section and let $\Gamma$ be its graph. Then
    \begin{equation*}
         d_\Gamma(z)\le |z-p(z)-f(z)|\le 3 d_\Gamma(z)\qquad \text{for all }z\in B^m_{9}+ B^n_1\,.
    \end{equation*}
\end{lem}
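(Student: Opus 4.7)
The lower bound $d_\Gamma(z)\le |z-p(z)-f(p(z))|$ is immediate: the point $p(z)+f(p(z))$ lies in $\Gamma$ (since $p(z)\in B^M_{10}$ as $z\in B^m_9+B^n_1$ and $M$ is $\delta_0$-flat), so it is a competitor for the closest-point problem defining $d_\Gamma(z)$. This also shows, as a byproduct, that in fact the intended statement has $f(p(z))$ in place of $f(z)$ — note that $f$ is only defined on $M$.

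The plan for the upper bound is a triangle-inequality argument using the closest-point projection $p=p_M$. Let $w=x+f(x)\in\Gamma$ achieve (or $\eps$-approach) $d_\Gamma(z)$, i.e.\ $|z-w|=d_\Gamma(z)$. The key geometric observation is that since $f(x)\in T_x^\perp M$ and $|f(x)|$ is small enough to keep $w$ inside the tubular neighborhood of $M$ (which follows from $\Lip(f)\le 1$, the bound $|z|\le\sqrt{81+1}$ and the $\delta_0$-flatness of $M$), one has $p(w)=x$. Therefore, invoking that $p$ is Lipschitz on this neighborhood (recall it can be taken $3/2$-Lipschitz, and in fact $(1+O(\delta_0))$-Lipschitz, as noted in Subsection \ref{vfsadcasddsvcca} and Lemma \ref{lem:matrixp}),
\[
|x-p(z)| \;=\; |p(w)-p(z)| \;\le\; \Lip(p)\,|w-z| \;=\; \Lip(p)\,d_\Gamma(z).
\]

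Combining with the triangle inequality and the $1$-Lipschitz hypothesis on $f$,
\[
|z-p(z)-f(p(z))| \;\le\; |z-w| + |x-p(z)| + |f(x)-f(p(z))| \;\le\; d_\Gamma(z)\bigl(1+2\,\Lip(p)\bigr).
\]
Since $\delta_0$ has been fixed small enough for $\Lip(p)$ to be arbitrarily close to $1$, this yields the claimed constant $3$ (in the strictly flat case $M=\pi_0$ of Lemma \ref{lem:distlipgraphpc} the constant is sharp, with $\Lip(p)=1$).

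\textbf{Main obstacle.} The only nontrivial point is verifying that $w=x+f(x)$ lies in the tubular neighborhood where $p$ is well-defined, Lipschitz, and inverts the normal-exponential map in the sense that $p(x+f(x))=x$. This is where the global assumptions on the flatness of $M$ and the domain of $z$ enter, together with the apriori $L^\infty$-control $|f|\lesssim d_M$ implicit in the Lipschitz bound. Everything else is plain triangle inequality.
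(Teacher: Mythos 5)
The paper explicitly leaves this proof to the reader (``Like we did for Lemma~\ref{lem:distlipgraphpc}, we leave the elementary proof of Lemma~\ref{lem:distlipgraph} to the reader''), so there is no reference argument to compare against. Your argument is the natural one, and its skeleton --- lower bound by exhibiting $p(z)+f(p(z))$ as a competitor, upper bound by writing $z-p(z)-f(p(z))=(z-w)+(p(w)-p(z))+(f(p(w))-f(p(z)))$ where $w$ realizes $d_\Gamma(z)$, then using $\Lip(p_M)$ and $\Lip(f)\le 1$ --- is correct. You are also right that the statement has a typo: $f$ is defined on $M$, so it must read $f(p(z))$, not $f(z)$.

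Two small points deserve care. First, the constant: your decomposition yields $\bigl(1+2\Lip(p_M)\bigr)d_\Gamma(z)$, and since $\Lip(p_M)=1+O(\delta_0)$ on the relevant cylinder (by Lemma~\ref{lem:matrixp} the tangential block of $Dp$ is $(\Id_m-\sff_{z-p(z)})^{-1}$, whose norm exceeds $1$ slightly), this gives $3+O(\delta_0)$, not $3$ on the nose. This is harmless --- the factor is absorbed into a generic $C$ one line later in the paper --- but if one wants to hit $3$ sharply one should avoid double-counting: split $z-w$ into tangential and normal components at $p(z)$, which satisfy a near-Pythagoras identity, and get $\sqrt 2\,(1+O(\delta_0))$ as in the flat model.

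Second, and more substantively: you claim the bound on $|f(x)|$ needed for $w=x+f(x)$ to lie in the tubular neighborhood ``follows from $\Lip(f)\le 1$, the bound $|z|\le\sqrt{82}$ and the $\delta_0$-flatness of $M$.'' That is not a valid derivation: $\Lip(f)\le 1$ gives no control on $\|f\|_\infty$ ($f$ could be a large constant section), and $|z|$ says nothing about $f$. The statement as written has no $L^\infty$ hypothesis on $f$. Either one silently adds one (which is legitimate, since in the sole application $f=\bar f$ satisfies $\|\bar f\|_\infty\lesssim\boldsymbol\eps$), or one treats large $|f|$ separately: by the set-up in Section~\ref{vfsadcasddsvcca} the tubular neighborhood extends to height $\sim 200$, so the projection argument applies whenever $|f(x)|\lesssim 200$; and if $|f(x)|$ is larger, then $d_\Gamma(z)\ge |f(x)|-O(1)$ while $|z-p(z)-f(p(z))|\le |f(x)|+O(1)$, so the ratio is trivially close to $1$. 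Once this case split is made explicit, the proof is complete.
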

Like we did for Lemma \ref{lem:distlipgraphpc}, we leave the elementary proof of Lemma \ref{lem:distlipgraph} to the reader.

\subsection{Updating the minimal surface} 
We construct now, in the smaller cylinder $\bC_{8}$ a true minimal surface $M'$ which is $O(\boldsymbol{\eps}^2)$-close to $\widetilde{\Gamma}$. 

By elliptic regularization, $\tilde f$ has size $O(\boldsymbol{\eps})$ in a strong norm in the interior of $B^M_{10},$ for example
\begin{equation}\notag
    \|\tilde f \|_{C^{2,1/2}(B^M_{5})} \le C\|\tilde f \|_{L^\infty(B^M_{10})}\le C\boldsymbol{\eps}\,.
\end{equation}
Since $\LL_M \tilde f=0,$ we solve the minimal surface system by the inverse function theorem of Lemma \ref{lem:solveMSS} to find a normal section $f'\colon B^M_{5}\to \R^{m+n}$ of $M$ such that its graph $M'$ is a $(\delta+C\boldsymbol{\eps})$-flat minimal surface and 
\[
\|f'-\tilde f\|_{C^{2,1/2}(B^M_{5})}\le C\boldsymbol{\eps}^2\,.
\]
In particular, arguing as in \eqref{eq:gammabargammatilde},
\begin{align*}
    \Big(\int_{ \bC_{1}} d^2_{M'}\, d\|V\|\Big)^{1/2} &\le C\Big(\int_{ \bC_{1}} d^2_{\widetilde{\Gamma}}\, d\|V\|\Big)^{1/2} +C\boldsymbol{\eps}^2\,.
\end{align*}
Combining this with \eqref{eq:gammabargammatilde}, we find
\begin{equation}\label{eq:pluto}
\Big(\int_{ \bC_{1}} d^2_{M'}\, \dd\|V\|\Big)^{1/2}\le\Big(\int_{ \bC_{1}} d^2_{\overline{\Gamma}}\, \dd\|V\|\Big)^{1/2} +C\boldsymbol{\eps}^{3/2}\,.  
\end{equation}
\subsection{Conclusion and the role of $\eta$}
In order to estimate the last integral, we use that $|d_{\overline{\Gamma}}|\le C\boldsymbol{\eps}$ on $V$ and the fact that $V$ coincides with the multigraph $\Gamma$ up to a set of measure $\boldsymbol{\eps}^2$ (as in \eqref{eq:pippo}). We find
\begin{align*}
    \Big(\int_{ \bC_{1}} d^2_{\overline{\Gamma}}\, d\|V\|\Big)^{1/2}& \le C\Big(\int_{ \Gamma\cap \bC_{1}} d^2_{\overline{\Gamma}}\, d\mathcal{H}^m\Big)^{1/2} +C\boldsymbol{\eps}^2\,.
\end{align*}
Now the parameter $\boldsymbol{\eta}$ comes into play, since $\overline{\Gamma}$ and $\Gamma$ differ precisely when $f$ is genuinely multivalued, which happens on a set of measure at most $\boldsymbol{\eta}$. Rigorously, let $F\subset B_{10}^M$ be defined as $F\defeq\big\{x:f(x)\ne Q \a{\bar f(x)}\big\}$. Then
\begin{align*}
    \mathcal{H}^m(F)\le  \mathcal{H}^m(F\cap K)+\mathcal{H}^m(F\setminus K)\le C\boldsymbol{\eta}+ C\boldsymbol{\eps}^2\,,
\end{align*}
where we used \eqref{densdvacs}  to deal with the first part and \eqref{measureestimate} to deal with the second.
It follows that  $\mathcal{H}^m(\{d_{\overline{\Gamma}}>0\}\cap \Gamma\cap \bC_1 )\le  C\boldsymbol{\eta}+C\boldsymbol{\eps}^2$. Hence we get 
\begin{equation*}
    \Big(\int_{ \Gamma\cap \bC_{1}} d^2_{\overline{\Gamma}}\, d\mathcal{H}^m\Big)^{1/2} \le C\sqrt{\boldsymbol{\eta}}\boldsymbol{\eps}+C\boldsymbol{\eps}^2\,,
\end{equation*}
which, together with \eqref{eq:pluto}, concludes the proof of \eqref{eq:contraction}.

\section{Proof of smooth rectifiability}

We are going to prove a stronger version of Theorem \ref{rectthm}, which reads as follows. 
\begin{thm}[Rectifiability]\label{rectthm1}
Let $V$ be an $m$-dimensional  stationary varifold in an open set $U$. Then, there exists a countable collection of Borel sets $(G_k)_k$ such that
\begin{equation*}
    \|V\|\Big(U\setminus \bigcup_k G_k\Big)=0\, ,
\end{equation*}
and, for every $k$, $G_k\subset M_k$ for a $m$-dimensional  $C^\infty$ submanifold $M_k$.  Moreover, for every $k$ and for every $z\in G_k$, we have
\begin{equation*}
     \sup_{\supp(V)\cap \B_{r}(z)} d_{M_k}=o(r^N)\qquad\text{as $r\downarrow 0,$ for every $N\in\N$}\,.
\end{equation*}
\end{thm}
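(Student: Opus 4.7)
The plan is to carry out the iteration of the Decay Lemma at $\|V\|$-almost every point in order to produce, at each such point, a $C^\infty$ minimal graph that approximates $\supp(V)$ with super-polynomial precision, and then to patch these graphs via a Whitney extension argument built on the second part of Lemma \ref{verfdsavdsc}. First, I identify a set of full $\|V\|$-measure where the iteration is viable: for each integer $Q\ge 1$, let $G^Q\subset\supp(V)\cap U$ be the set of points $z_0$ at which $\Theta(V,z_0)=Q$, an approximate tangent $m$-plane $\pi_{z_0}$ exists, and $\lim_{r\downarrow 0}r^{-m}\|V\|(\{\Theta_V\neq Q\}\cap \B_r(z_0))=0$. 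Standard differentiation of measures together with Allard's results give $\|V\|(\supp(V)\setminus\bigcup_Q G^Q)=0$. At each $z_0\in G^Q$ and for every sufficiently small $r$, the rescaled varifold and the plane $\pi_{z_0}$ (which is trivially $0$-flat) satisfy the hypotheses of the Decay Lemma with arbitrarily small $\boldsymbol{\eta}$ and $\boldsymbol{\eps}$.

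Starting from such a scale, I iterate the Decay Lemma along a geometric sequence of scales $r_k\downarrow 0$, producing a sequence of $\delta_k$-flat minimal surfaces $M_k$ with excesses $\boldsymbol{\eps}_k$ and density gaps $\boldsymbol{\eta}_k$ satisfying
\begin{equation*}
\boldsymbol{\eps}_{k+1}\le C(\sqrt{\boldsymbol{\eta}_k}+\sqrt{\boldsymbol{\eps}_k})\boldsymbol{\eps}_k\quad\text{and}\quad \delta_{k+1}\le \delta_k+C\boldsymbol{\eps}_k.
\end{equation*}
Because $\boldsymbol{\eta}_k\to 0$ at a good point and $\boldsymbol{\eps}_0$ was taken tiny, $\boldsymbol{\eps}_k$ decays faster than any geometric rate, $\delta_k$ stays bounded, and $\sum_k\boldsymbol{\eps}_k<\infty$. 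Invoking the $C^l$-comparison \eqref{verfdsavdsc2} at each step, the graphs representing $M_k$ and $M_{k+1}$ over a common reference plane agree up to $C^l$ error $C_l\boldsymbol{\eps}_k$ for every $l$, so the sequence converges in every $C^l$ norm to a $C^\infty$ minimal graph $\mathcal{M}^{z_0}$ in a neighborhood of $z_0$. Combining this with Proposition \ref{prop:altezza} applied at scale $r_k$ yields both $z_0\in\mathcal{M}^{z_0}$ and the sought decay
\begin{equation*}
\sup_{\supp(V)\cap\B_r(z_0)}d_{\mathcal{M}^{z_0}}=o(r^N)\qquad\text{as }r\downarrow 0,\text{ for every }N\in\mathbb{N}.
\end{equation*}

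To extract a countable cover, I partition each $G^Q$ into countably many Borel pieces on which the starting scale, the approximate tangent plane $\pi_{z_0}$, and the smallness parameters all vary within small compact sets. On a single such piece $G^Q_i$, fixing a common reference plane, every $\mathcal{M}^{z_0}$ is a graph over that plane, and comparing the iterations at two points $z_0,z_1\in G^Q_i$ via \eqref{verfdsavdsc2} at scales $r\sim |z_0-z_1|$ shows that the Taylor jets of $\mathcal{M}^{z_0}$ at $z_0$ and of $\mathcal{M}^{z_1}$ at $z_1$ are Whitney-compatible to every finite order. A classical Whitney $C^\infty$ extension theorem then produces a single $C^\infty$ graph $M_i$ containing all of $G^Q_i$, and enumerating over $i$ and $Q$ finishes the proof. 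The main technical obstacle lies precisely in this Whitney step: the one-step comparison constants $C_l$ in \eqref{verfdsavdsc2} depend on $l$, but they enter only linearly against the super-polynomially decaying $\boldsymbol{\eps}_k$, so arbitrarily high orders of compatibility are affordable; the delicate point is tracking the changes of reference frames and closest-point projections between the iterations centered at different good points, which must be controlled uniformly within each Borel piece.
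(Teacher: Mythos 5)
Your overall architecture matches the paper's: blow up at $\|V\|$-a.e.\ point, iterate the Decay Lemma along dyadic (in the paper, $r_k = 100^{1-k}$) scales, compare consecutive iterates via the second part of Lemma~\ref{verfdsavdsc}, and conclude with a Whitney $C^\infty$ extension over each of countably many ``good'' Borel pieces. The iteration bookkeeping ($\boldsymbol{\eps}_{k+1}\le C(\sqrt{\boldsymbol{\eta}_k}+\sqrt{\boldsymbol{\eps}_k})\boldsymbol{\eps}_k$, $\delta_{k+1}\le\delta_k+C\boldsymbol{\eps}_k$, summability, super-polynomial decay of $\boldsymbol{\eps}_k$) and the Whitney-compatibility step between two nearby good points at the scale $r_k\sim |z_0-z_1|$ are exactly what the paper does in Lemma~\ref{rvefdsacdsca}.

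However, there is a genuine gap in the assertion that ``the sequence converges in every $C^l$ norm to a $C^\infty$ \emph{minimal} graph $\mathcal{M}^{z_0}$ in a neighborhood of $z_0$.'' The estimate \eqref{verfdsavdsc2} is scale-invariant: applied at scale $r_{k+1}$, it controls $\|D^l(g^k_{z_0}-g^{k+1}_{z_0})\|_{L^\infty}$ only on $B_{r_{k+1}/10}(x_0)$, i.e.\ on balls that shrink to the point. What this yields is convergence of the Taylor jet of the $g^k_{z_0}$ at $x_0$ (the limiting coefficients $p^\beta_{z_0}=\lim_k D^\beta g^k_{z_0}(x_0)/\beta!$), not convergence of the surfaces on any fixed neighborhood. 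If the surfaces did converge in $C^2_{\mathrm{loc}}$ on a fixed ball, the limit would be a classical smooth minimal graph that $V$ approximates to infinite order at $z_0$ — this would prove Conjecture~\ref{c:flat-high-order} in full strength, whereas the paper explicitly states (see the unnumbered theorem after Theorem~\ref{rectthm}) that the minimality of the limiting graph is precisely what they cannot establish. The argument is repaired by working only with the formal jets $\{p^\beta_{z_0}\}$: prove their Whitney compatibility directly from \eqref{verfdsavdsc2} and the summable tails $\eps(r_k)\le D_{l,\varpi}r_k^l$, then apply Whitney's theorem. This is what the paper does, and it produces a $C^\infty$ graph $M_k$ that need not be minimal. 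Your final paragraph (the Whitney patching) is then fine once ``Taylor jets of $\mathcal M^{z_0}$'' is read as ``the limiting formal jets,'' but as written the intermediate existence claim is false.
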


Clearly the latter theorem imples both Theorem \ref{rectthm} and Corollary \ref{c:flat-high-order}.

\begin{lem}\label{rvefdsacdsca}
For every $m$, $n$, and $Q$ positive integers there are positive constants $\eta_1, \delta_1$, and $\eps_1$ with the following property.
Let $V$ be an $m$-dimensional stationary varifold in $\bC_{101}$ and let $Q\in\N$.
Let $G\subset\supp(V)\cap\bC_{1/20}$ be such that, for some modulus of continuity $\varpi:\R\rightarrow (0,\eta)$ with $\eta<\eta_1$,
\begin{equation}\label{eqqqmeno}
   \frac{\|V\|(\{\Theta_V<Q\}\cap \bC_{s}(z))}{\omega_m s^m}< \varpi(s)   \qquad\text{for all $s\in(0,100]$ and $z\in G$}\,.
\end{equation}
Assume moreover that, for every $z\in \bC_{1/20}\cap\supp(V)$,
    \begin{equation}\label{eqqq2}
    \supp(V)\cap \bC_{100}(z)\subset \{(x,y):|y|\le 1\},\quad  Q-\textstyle{\frac{1}{2}} <\frac{\|V\|(\bC_{30})(z)}{\omega_m(30)^m} <Q+\textstyle{\frac{1}{2}},\quad \frac{\|V\|(\bC_{100}(z))}{\omega_m100^m} \le Q+\textstyle{\frac{1}{2}}\,.
        \end{equation}
   Assume finally that there exists a minimal surface $M$ such that, for every $z\in \supp(V)\cap\bC_{1/20}$, $M$ is $\delta$-flat with $\delta<\delta_1$ in $\bC_{200}(z)$ and 
    \begin{equation}\label{eqqq3}
    \boldsymbol{\eps} \defeq \Big(\frac{1}{\omega_m100^{m+2}}\int_{\bC_{100}} d^2_M  \,\dd\|V\|\Big)^{1/2} \le \eps_1\,.
    \end{equation}
Then, for every $l\in\N$ and for every $z=(x,y)\in G$, there exists a polynomial $p_{z,l}:\R^m\rightarrow\R^n$ of degree $l$ with $p_{z,l}(x)=y$ satisfying the following. For every $z_1=(x_1,y_1),z_2=(x_2,y_2)\in G$, for every $l\in \N$ and multi index $\alpha$ with $|\alpha|\le l$,
    \begin{equation}\label{vefdscas}
|D^\alpha p_{z_1,l}(x_2)-D^\alpha p_{z_2,l}(x_2)|    \le C_{l,\varpi}|x_1-x_2|^{l-|\alpha|+1}\,, \end{equation}
where $C_{l,\varpi}$ depends on $m$, $n$, $Q$ and $\varpi$.
Moreover, for any $l'\le l$,  $p_{z,l}$ and $p_{z,l'}$ agree up to the terms of degree $l'$.
Finally, for every $z_0=(x_0,y_0)\in G$ and $l\in\N$,
\begin{equation}\label{fdaddcs}
    \sup_{z=(x,y)\in\supp(V)\cap \bC_{r}(x_0)}|y-p_{z_0,l}(x)|\le  C_{l,\varpi} r^{l+1}\quad\text{for every }r\in (0,1)\,.
\end{equation}
\end{lem}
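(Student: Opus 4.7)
The approach is to iterate Lemma~\ref{decaylemma} at every $z\in G$ to produce a sequence of ever-improving approximating minimal surfaces, extract the polynomials $p_{z,l}$ from the ``limit jet'' of these surfaces at $z$, and deduce the approximation \eqref{fdaddcs} and the Whitney compatibility \eqref{vefdscas} by a telescoping argument on the increments of the iteration.

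\textbf{Iteration and definition of $p_{z,l}$.} Fix $z=(x,y)\in G$. Starting from the given $M$, iteratively apply Lemma~\ref{decaylemma} after recentering/rescaling at $z$, producing a sequence of minimal surfaces $M_z^k$ at geometric scales $r_k\defeq 100^{-k}$, each $\delta_k$-flat in $\bC_{2r_k}(z)$ and with $L^2$-excess $\boldsymbol{\eps}_k(z)$. The mass, density and flatness hypotheses at every scale are guaranteed by Remark~\ref{portoavantiipotesi}, while \eqref{eqqqmeno} bounds the density-deficit parameter at scale $r_k$ by $\varpi(100 r_k)\to 0$. Therefore
\[
\boldsymbol{\eps}_{k+1}(z) \le C\bigl(\sqrt{\varpi(100r_k)}+\sqrt{\boldsymbol{\eps}_k(z)}\bigr)\boldsymbol{\eps}_k(z)\quad\text{and}\quad \delta_{k+1}\le \delta_k+C\boldsymbol{\eps}_k(z)\,,
\]
so $\delta_k$ stays uniformly bounded and $\boldsymbol{\eps}_k(z)=o(r_k^N)$ for every $N\in\N$. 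Writing $M_z^k$ as the graph of a smooth normal section $g_z^k:B_{2r_k}(x)\to\R^n$ with $g_z^k(x)=y$, Lemma~\ref{verfdsavdsc}\eqref{verfdsavdsc2} yields
\[
\|D^\alpha(g_z^{k+1}-g_z^k)\|_{L^\infty(B_{r_{k+1}/10}(x))}\le C_{|\alpha|}\,r_{k+1}^{1-|\alpha|}\boldsymbol{\eps}_k(z)\,,
\]
so each $\{D^\alpha g_z^k(x)\}_k$ is Cauchy, with limit $c_z^\alpha\in\R^n$. Define $p_{z,l}(x')\defeq\sum_{|\alpha|\le l}c_z^\alpha(x'-x)^\alpha/\alpha!$; by construction $p_{z,l}$ equals the truncation of $p_{z,l+1}$ at degree $l$, and the claimed nested consistency follows.

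\textbf{Telescoping and conclusion.} Both \eqref{fdaddcs} and \eqref{vefdscas} rest on the identity
\[
g_z^k(x')-p_{z,l}(x') = R_l[g_z^1](x') + \sum_{k'=1}^{k-1}R_l[h_z^{k'}](x') - \sum_{k'\ge k}T_l[h_z^{k'}](x')\,,
\]
where $h_z^{k'}\defeq g_z^{k'+1}-g_z^{k'}$, $T_l$ is the degree-$l$ Taylor polynomial at $x$, and $R_l\defeq\Id-T_l$. For \eqref{fdaddcs}, given $r\in (0,1)$ we pick $k$ with $r_{k+1}\le r\le r_k/10$. On $B_r(x)$ each interior remainder satisfies $|R_l[h_z^{k'}](x')|\le C_l r_{k'+1}^{-l}\boldsymbol{\eps}_{k'+1}(z)\,r^{l+1}$ by Lemma~\ref{verfdsavdsc}\eqref{verfdsavdsc2}, which sums to $O(r^{l+1})$ thanks to the super-polynomial decay of $\boldsymbol{\eps}_{k'+1}$; the tail $\sum_{k'\ge k}T_l[h_z^{k'}](x')$ is controlled termwise via $|D^\alpha h_z^{k'}(x)|\le C_\alpha r_{k'+1}^{1-|\alpha|}\boldsymbol{\eps}_{k'+1}(z)$ and also contributes $O(r^{l+1})$; while $R_l[g_z^1](x')$ is an ordinary Taylor remainder at the fixed scale $r_1$, trivially $O(r^{l+1})$. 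Combined with $|y'-g_z^k(x')|=o(r^N)$ on $\supp(V)\cap\bC_r(x)$ (Proposition~\ref{prop:altezza} applied to $M_z^k$), this yields \eqref{fdaddcs}. The Whitney estimate \eqref{vefdscas} is obtained by the same telescoping, now applied to $g_{z_1}^k-g_{z_2}^k$: Lemma~\ref{verfdsavdsc}\eqref{verfdsavdsc2} comparing $M_{z_1}^k$ and $M_{z_2}^k$ at scale $r_k\sim|x_1-x_2|$ provides the super-polynomially small $C^{l+1}$-control on this difference needed to conclude.

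\textbf{Main obstacle.} The only truly delicate point is the matching of exponents: the crude higher-derivative bound $|D^{l+1}g_z^k|\lesssim \delta r_k^{-l}$ available for each \emph{individual} surface would only give an $O(r)$ Taylor remainder at the natural scale $r\sim r_k$, far worse than the required $O(r^{l+1})$. The telescoping above is designed precisely to bypass this: Taylor remainders are taken only on the \emph{increments} $h_z^{k'}$, whose $C^{l+1}$-norms inherit the super-polynomially small factor $\boldsymbol{\eps}_{k'+1}$, more than enough to absorb the bad $r_{k'+1}^{-l}$ scaling. The crude bound enters only once, through the single initial term $g_z^1$, at a fixed scale where it is harmless.
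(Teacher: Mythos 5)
Your proposal is essentially the same proof as the paper's: both iterate Lemma~\ref{decaylemma} at each $z\in G$ at dyadic(-in-$100$) scales, use the second part of Lemma~\ref{verfdsavdsc} to compare consecutive approximating minimal graphs in all $C^l$-norms, define $p_{z,l}$ as the limit of the Taylor coefficients $D^\beta g_z^k(x)/\beta!$, and then deduce both the approximation bound \eqref{fdaddcs} and the Whitney compatibility \eqref{vefdscas} by combining the summable $C^l$ increments with the super-polynomial decay $\boldsymbol{\eps}(r_k)\le D_{l,\varpi}r_k^l$ furnished by \eqref{eqqqmeno}. Your ``telescoping identity'' is a compact repackaging of the paper's \eqref{eq:terrible}, \eqref{csadcacs}, \eqref{csadcacs1}; the ``Main obstacle'' paragraph articulates the key point (take Taylor remainders only on increments, not on $g_z^k$ itself) which the paper uses implicitly.

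Two small inaccuracies worth flagging, neither fatal. First, you write $g_z^k(x)=y$; this is false in general, since Lemma~\ref{decaylemma} does not force the updated minimal surface through $z$. One only has $|g_z^k(x)-y|\le C\boldsymbol{\eps}(r_k)\to 0$ from the height bound (this is exactly the paper's \eqref{dwcasdac}), which suffices to identify the constant term of $p_{z,l}$ with $y$. Second, in the Whitney step you assert a super-polynomially small $C^{l+1}$-control on $g_{z_1}^k-g_{z_2}^k$, but the argument also needs the \emph{uniform} (not small) $C^{l+1}$ bound on the individual graphs $g_{z_i}^k$ (the paper's constant $E_{l+1}$ in \eqref{csadcacs1}) to control the Taylor remainder of $g_{z_1}^k$ at $x_1$ evaluated at $x_2$; and, to apply Lemma~\ref{verfdsavdsc} at scale $r_k\sim|x_1-x_2|$, the reference surface must come from the previous scale, so the clean comparison is between $g_{z_1}^{k-1}$ and $g_{z_2}^{k}$ rather than two surfaces at the same level $k$, as the paper does in \eqref{csadcacs}. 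These are easily repaired and do not change the structure of the argument.
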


\begin{proof}[Proof of Theorem \ref{rectthm1}]
Let $\eta_1,\delta_1$, and $\eps_1$ be the parameters given by Lemma \ref{rvefdsacdsca}.
We recall that for $\|V\|$-a.e.\ $z$, $\Theta_V(z)\in\mathbb{N}$ and the tangent to $V$ is 
$\Theta_V(z)\a{\pi_z}$, for some $m$-plane $\pi_z$. It is then enough to  fix $Q\in\mathbb{N}$ and prove the claim on the set 
   \begin{equation}\notag
       G\defeq \{z:\Theta_V(z)=Q,\ \exists\text{$s_0\in(0,1)$ and $\varpi:\R\rightarrow (0,\eta)$ such that \eqref{eqqqmeno} holds at $z$ for every $s\in (0,r_0)$}\}\,.
   \end{equation}
   Now, for a.e.\ $z\in G$, the tangent to $V$ is $Q\a{\pi_z}$. Fix any such $z$.
   In particular, the rescaled varifolds $V_r$ converge to $Q\a{\pi_z}$ (and the support of $V_r$ converge to $\pi_z$ in the Kuratowski sense). For definiteness, we assume that $\pi_z=\pi_0$.
   We thus see that, for $r$ small enough, $\hat V_r\defeq V_r\res \{(x,y):|y|<1\}$ is a stationary varifold satisfying the assumptions of Lemma  \ref{rvefdsacdsca} with $\hat G\defeq G\cap \bC_{1/20}$ in place of $G$ and  $\pi_0$ in place of $M$. A quick way to see \eqref{eqqq2} is by using Lemma \ref{vfedacssc}.  Now we apply Lemma  \ref{rvefdsacdsca} and obtain the polynomials $p_{z,l}$. Notice that \eqref{vefdscas} implies in particular that the projection $\mathbf{p}_0:\hat G\rightarrow\pi_0$ is injective. We then take, for $x\in \mathbf{p}_0(\hat G)$, the unique point $z_x$ with $z_x\in\hat G$ and $\mathbf{p}_0(z_x)=x$.
   Hence, for every $x\in \mathbf{p}_0(\hat G)$ and $l\in\N$, there exists a unique polynomial $p_{x,l}\defeq p_{z_x,l}$. The main claim of Lemma \ref{rvefdsacdsca}, i.e.\ \eqref{vefdscas}, implies that these polynomials satisfy the compatibility conditions required to apply the main theorem of Whitney in \cite{WhitneyExt}, thus obtaining a map $p\in C^\infty(B_1,\R^n)$ such that, for every $x\in \mathbf{p}_0(\hat G)$, and $\alpha$ with $|\alpha|\le l$, $D^\alpha p (x)=D^\alpha p_{x,l}(x)$, i.e.\ $p_{x,l}$ is the Taylor polynomial of degree $l$ for $p$ at $x$.
   We remark that, for $\alpha=(0,\dots,0)$, this reads as $(x,p(x))=z_x$, in particular,  $\hat G\subset \Gamma_p$  is a subset of a smooth submanifold. This proves the first part of the theorem. 

   Fix now $z_0=(x_0,y_0)\in \hat G$ and $N\in\N$. Let $l\in\N$. Recall \eqref{fdaddcs}, i.e.\
\begin{equation}\notag
    \sup_{z=(x,y)\in\supp(\hat V)\cap \bC_{r}(x_0)}|y-p_{x_0,l}(x)|\le  C_{l,\varpi} r^{l+1}\qquad\text{for every }r\in (0,1)\,.
\end{equation}
Hence, for every $r\in (0,1)$,
\begin{equation}\notag
     \sup_{z\in\supp(\hat V)\cap \bC_r(x_0)}d_{\Gamma_{p}}(z)\le\sup_{z=(x,y)\in\supp(\hat V)\cap \bC_{r}(x_0)}|y-p(x)|\le  \sup_{x\in B_r(x_0)}|p(x)-p_{x_0,l}(x)|+C_{l,\varpi} r^{l+1}\,,
\end{equation}
so that  the conclusion follows from Taylor's Theorem.   
\end{proof}
\begin{proof}[Proof of Lemma  \ref{rvefdsacdsca}]
The proof of this lemma is a careful iteration of Lemma \ref{decaylemma}. For this purpose we fix $\varepsilon_0$, $\delta_0$, and $\eta_0$ so that the latter applies.

\medskip

\textbf{Step 1}. Set $r_k\defeq 100^{1-k}$, for $k\ge 0$. Set $\eps(r_0)=\eps$ and, for $k\ge 0$, $\eps(r_{k+1})\defeq \bar C(\sqrt{\eta(r_k)}+\sqrt{\eps(r_k)})\eps(r_k)$. Moreover, set $\delta(r_0)=\delta$ and, for $k\ge 0$, $\delta(r_{k+1})\defeq\delta (r_k)+\bar C \eps(r_k)$. Here, the geometric constant $\bar C$ is the geometric constant that appears in the estimates of Lemma \ref{decaylemma}.
Now, if $\eps,\eta,\delta$ are small enough (depending also on $\bar C$), we can ensure what follows:
\begin{itemize}
    \item $\eps(r_k)\le \eps_0$ for every $k$,
    \item there exist constants $D_{l,\varpi}$ independent of $k$, but depending upon $m$, $n$, $Q$ and $\varpi$, such that $$\eps(r_k)\le D_{l,\varpi}r_k^{l}\qquad\text{for every }k\,,$$
    \item $\delta(r_k)\le\delta_0 $ for every $k$. 
\end{itemize}
We are going to exploit these relations throughout.
\medskip

\textbf{Step 2}. Take now any $z=(x,y)\in G$ and consider the cylinders $\bC_{r_k}(x)$. Set $M^0_z\defeq M$ (which is independent of $z$),   $M^0_z$ can be written as $\delta(r_0)$-flat graph of $g_z^0:B_{2r_0}(x)\rightarrow\R^n$, and has Taylor polynomial of degree $l$ at $x$, say $p_{z,l}^0$. Notice that among the assumptions of the lemma, there is that $V$ satisfies \eqref{eqqq2} and \eqref{eqqq3} on $\bC_{100}(x)=\bC_{r_0}(x)$. This is going to be the base step in an induction process.
%

For what concerns the inductive step, we argue as follows.
Assume that at step $k\ge 0$,  $V$ satisfies the scaled version of \eqref{eqqq2} on $\bC_{r_k}(x)$, and 
we have a  minimal surface $M_z^k$, which can be written as $\delta(r_k)$-flat graph of $g_z^k:B_{2r_k}(x)\rightarrow\R^n$, satisfying 
    \begin{equation}\label{vdsfvascs}
    \Big(\frac{1}{\omega_m r_k^{m+2}}\int_{\bC_{r_k}(x)} d^2_{M^k_z} \, \dd\|V\| \Big)^{1/2}\le \eps(r_k)\,.
    \end{equation}
    We denote by $p_{z,l}^k$ the  Taylor polynomial  of degree $l$ for $g_z^k$ at $x$.  
    
Then, we can recall \eqref{eqqqmeno} and apply Lemma \ref{decaylemma} to $\bC_{r_k}(x)$ and obtain  a  minimal surface $M_z^{k+1}$, which can be written as  a $\delta(r_k)+\bar C\eps(r_k)=\delta(r_{k+1})\le \delta_0$-flat graph of $g_z^{k+1}:B_{2r_{k+1}}(x)\rightarrow\R^n$, satisfying
    \begin{equation}\notag
    \Big(\frac{1}{\omega_m r_{k+1}^{m+2}}\int_{\bC_{r_{k+1}}(x)} d^2_{M^{k+1}_z} \, \dd\|V\| \Big)^{1/2}\le \bar C(\sqrt{\eta(r_k)}+\sqrt{\eps(r_k)})\eps(r_k)=\eps(r_{k+1})\le \eps_0\,.
    \end{equation}
    We denote by $p_{z,l}^{k+1}$ the  Taylor polynomial  of degree $l$ for $g_z^{k+1}$ at $x$.  
Moreover, we observe that the scaled version of \eqref{eqqq2} holds on $\bC_{r_{k+1}}(x)$ by Remark \ref{portoavantiipotesi}.
In passing, notice that Proposition \ref{prop:altezza} with \eqref{vdsfvascs} imply that 
\begin{equation}\label{vefdsvadscd}
    \sup_{\supp(V)\cap \bC_{r_k/2}(x)}d_{M_k}\le C\eps(r_{k})\,.
\end{equation}
In particular, as $z\in \supp(V)$, we have   
\begin{equation}\label{dwcasdac}
    g_{z}^{k}(x)\rightarrow y\qquad\text{as $k\rightarrow\infty$}\,.
\end{equation}

We can then iterate the procedure above and have the objects defined at every step $k$.
\medskip\\\textbf{Step 3}. Let $z_1=(x_1,y_1), z_2=(x_2,y_2)\in G$. Let $k\ge 1$ be such that $|x_1-x_2|< \frac{r_k}{10}$. We thus can apply the scale-invariant form of Lemma \ref{verfdsavdsc} to estimate
\begin{equation}\label{csadcacs}
\begin{split}
    \| D^\alpha g^{k-1}_{z_1}-D^{\alpha}g^{k}_{z_{2}}\|_{C^0(B_{r_{k}/10}(x_2))}&\le 
    \| g^{k-1}_{z_1}-g^{k}_{z_{2}}\|_{C^{|\alpha|}(B_{r_{k}/10}(x_2))}\le  C_{|\alpha|}   r_k^{1-|\alpha|} (\eps(r_{k-1})+\eps(r_k))\\&\le  C_l r^{1-|\alpha|}_k D_{l,\varpi} (r_{k-1}^l+r_{k}^l)\le 200^l C_lD_{l,\varpi} r_k^{1-|\alpha|+l}\le C_{l,\varpi}r_k^{1-|\alpha|+l}\,,
\end{split}
\end{equation}
for any multi-index $\alpha$ and $l\ge |\alpha|$. 

Now take $z_1=z_2=z=(x,y)\in G$, notice that the right-hand-side of \eqref{csadcacs} is a convergent sum, in particular, the following holds. First,
\begin{equation}\label{csadcacs1}
    \sup_{k\ge 0}\| g^{k}_{z}\|_{C^l(B_{r_{k}/10}(x))}\le E_l<\infty\,,
\end{equation}
for constants $E_l$ independent of $z$, where we used also that $g_z^0$ is independent of $z$. Also
\begin{equation*}
    |D^\alpha g^{k-1}_z(x) - D^\alpha g^{k}_z(x)|\le C_{l,\varpi} r_k^{1+l-|\alpha|}\qquad \text{for all }|\alpha|\le l\,.
\end{equation*}
This implies, for each multi index $|\beta|\le l$, the existence of the limit coefficients $p^\beta_{z}=\lim_k D^\beta g_{z}^k(x) /\beta!$, the fact that they do not depend on $l,$ the fact that $p^0_{z}=y$ (by \eqref{dwcasdac}) and the bounds $$|p^\beta_{z}|\le E_l,\qquad \Big|p^\beta_{z}-\frac{D^\beta g_{z}^k(x)} {\beta!}\Big|\le C_{l,\varpi} r_k^{1+l-|\beta|}\,,$$ 
where we have
\[
p^k_{z,l}(x') \defeq \sum_{|\beta|\le l}\frac{D_\beta g^k_{z} (x)}{\beta!}(x'-x)^\beta,\qquad p_{z,l}(x')\defeq\sum_{|\beta|\le l}p^\beta_{z}(x'-x)^\beta\,.
\]
Putting things together we get the following useful bound, for all $|\alpha|\le l$,
\begin{equation}\label{eq:terrible}
\begin{split}
    \|D^\alpha(p^k_{z,l} - p_{z,l})\|_{L^\infty(B_{r_k}(z))} &\le \sum_{|\beta|\le l-|\alpha|,\beta\ge\alpha } c_{\alpha,\beta} \Big|p^\beta_{z}-\frac{D^\beta g_{z}^k(x) }{\beta!}\Big|r_k^{\beta-\alpha} \\
    &\le C_{l,\varpi}\sum_{|\beta|\le l-|\alpha|,\beta\ge\alpha } r_k^{1+l-|\beta|} r_k^{|\beta|-|\alpha|} \le C_{l,\varpi}r_k^{1+l-|\alpha|}\,.
\end{split}
\end{equation}

Let $z_1=(x_1,y_1), z_2=(x_2,y_2)\in G$ and let $k\ge 1$ denote the largest integer such that $|x_1-x_2|<r_k/10$, this implies that  $r_k\le 1000|x_1-x_2|$. Let $l\ge 0$ and let $\alpha$ be a multi-index and with $|\alpha|\le l$.  We then have,
\begin{equation}\notag
\begin{split}
        |D^\alpha p_{z_1,l}^{k-1}(x_2)-D^\alpha p_{z_2,l}^{k}(x_2)|&\le |D^\alpha p_{z_1,l}^{k-1}(x_2)-D^\alpha g_{z_1}^{k-1}(x_2)|+|D^\alpha g_{z_1}^{k-1}(x_2)-D^\alpha g_{z_2}^{k}(x_2)|\\
        &\le |x_1-x_2|^{l-|\alpha|+1}\|D^\alpha g_{z_1}^{k-1}\|_{C^{l-|\alpha|+1}(B_{r_k/10}(x_1))}+\|D^\alpha g_{z_1}^{k-1}-D^\alpha g_{z_2}^k\|_{C^0(B_{r_k/10})(x_2)}\\
        &\le |x_1-x_2|^{l-|\alpha|+1}\|g_{z_1}^{k-1}\|_{C^{l+1}(B_{r_k/10}(x_1))}+\| g_{z_1}^{k-1}-g_{z_2}^k\|_{C^{|\alpha|}(B_{r_k/10})(x_2)}\\
        &\le |x_1-x_2|^{l-|\alpha|+1} E_{l+1}+200^{l}C_{|\alpha|}D_{l,\varpi} r_k^{1-|\alpha|+l}\\
        &\le \big(E_{l+1}+200^{l}C_{|\alpha|}D_{l,\varpi}1000^{1-|\alpha|+l})|x_1-x_2|^{l-|\alpha|+1}\\
        &\le C_{l,\varpi}|x_1-x_2|^{l-|\alpha|+1}\,,
\end{split}
\end{equation}
where we used \eqref{csadcacs} and \eqref{csadcacs1}. Now we  employ \eqref{eq:terrible} which gives for $i=1,2$,
\begin{equation*}
    |D^\alpha p_{z_i,l}^{k-1}(x_i) - D^\alpha p_{z_i,l}(x_i)| \le \|D^\alpha (p_{z_i,l}^{k-1} -  p_{z_i,l})\|_{L^\infty(B_{r_k}(x_1))}\le C_{l,\varpi}r_k^{1+l-|\alpha|} \le C_{l,\varpi}|x_1-x_2|^{l-|\alpha|+1}\,,
\end{equation*}
and this concludes the proof of \eqref{vefdscas}.
\medskip\\\textbf{Step 4}. We prove \eqref{fdaddcs}, fix $z_0=(x_0,y_0)\in G$. Let $k\in\N$. First, recalling Lemma \ref{lem:distlipgraphpc} ($g_{z_0}^k$ is $\delta_0$-Lipschitz), Taylor's Theorem and \eqref{vefdsvadscd} imply that \begin{equation}\notag
\begin{split}
          \sup_{z=(x,y)\in\supp(V)\cap \bC_{r_k/10}(x_0)}|y-p^k_{z_0,l}(x)|&\le C\eps(r_{k})+C\sup_{B_{r_k/10}(x_0)}|D^{l+1}g^k_{z_0}|r_k^{l+1}\\
          &\le CD_{l+1,\varpi}r_k^{l+1}+C E_{l+1} r_k^{l+1}\le C_{l,\varpi} r_k^{l+1}\,,
\end{split}
\end{equation}
where  we used also \eqref{csadcacs1}. If we  now use \eqref{eq:terrible} (with $\alpha=(0,\dots,0)$)  we obtain
\begin{equation}\notag
    \sup_{z=(x,y)\in\supp(V)\cap \bC_{r_k/10}(x_0)}|y-p_{z_0,l}(x)|\le C_{l,\varpi} r_k^{l+1}\,,
\end{equation}
which clearly implies \eqref{fdaddcs}.
\end{proof}
\appendix

\section{Proofs of the auxiliary results}
\subsection{Proofs of the results in Section \ref{vfsadcasddsvcca}}
\begin{proof}[Proof of Lemma \ref{lem:matrixp}]
    Choose a chart for $M$ around $p(z)$, say $\varphi$, with $\varphi(0)=p(z)$. Extend the orthonormal  vectors $e_1,\dots,e_m,e_{m+1},\dots,e_n$ to an adapted orthonormal frame, that is, an orthonormal frame whose first $m$ vectors are tangent to $M$. Define a chart $\Psi$ for the normal bundle $\nu M$ as follows: 
    \begin{equation}\notag
        \R^m\times \R^n\ni (x,(b_{m+1},\dots,b_{m+n}))\mapsto\Big(\varphi(x),\sum\nolimits_{i=m+1}^{m+n} b_i e_{i}(\varphi(x))\Big)\,.
    \end{equation} 
    Moreover let $F:\nu M\rightarrow\R^{m+n}$ be defined as $(y,\nu)\mapsto y+\nu$. We will first compute the differential of $F$ at $(0,(\bar b_{m+1},\dots \bar b_{m+n}))$, where $\sum_{i=m+1}^{m+n} \bar b_i e_{i}(z)=z-p(z)$. In our chart, we take a $C^1$ curve $\gamma_t=(x(t),(b_{m+1}(t),\dots b_{m+n}(t)))$ such that $\gamma_0=(0,\bar b_{m+1},\dots,\bar b_{m+n})$. Hence, using the short-hand notations $\bar b=(\bar b_{m+1},\dots,\bar b_{m+n})$ and $\dot b=(\dot b_{m+1},\dots \dot b_{m+n})$, we have that 
    \begin{align*}
        D(F\circ\Psi)(0,\bar b)[\dot x,\dot b]&=\partial_t(F\circ \gamma)_{|0}=D_{\dot x}\varphi+\sum \dot b_i e_{i}+\sum \bar b_i  D_{D_{\dot x}\varphi} e_{i}\\
       &=D_{\dot x}\varphi-\sff_{\sum \bar b_i e_i}(D_{\dot x}\varphi,\,\cdot\,)+\sum \bar b_i P_{T^\perp M}D_{D\varphi\dot x} e_i+\sum \dot b_i e_i\,,
    \end{align*}
    where all the sums are understood over $i=m+1,\dots,m+n$.
    We thus see that
    \[
    D(F\circ\Psi)(0,\bar b)[\dot x,\dot b]=\left[ 
    \begin{array}{c|c} 
    D_{\dot x}\varphi  - \sff_{z-p(z)}(D_{\dot x}\varphi,\,\cdot\,)& 0 \\ 
    \hline 
    * & \dot b^T 
    \end{array} 
    \right]\,.
    \]
    Now, notice that the map $z\mapsto (p(z),z-p(z))$ is the inverse of $F$, this means that, in our chart, $p(F\circ\Psi(x,b))=\varphi(x)$ for every $(x,b)$. Differentiating at $(0,\bar b)$, 
    $$Dp(F\circ\Psi (0,\bar b))\left[ 
    \begin{array}{c|c} 
    D_{\dot x}\varphi  - \sff_{z-p(z)}(D_{\dot x}\varphi,\,\cdot\,)& 0 \\ 
    \hline 
    * & \dot b^T
    \end{array} 
    \right]=
    \left[ 
    \begin{array}{c} 
    D_{\dot x}\varphi \\ 
    \hline 
     0
    \end{array} 
    \right]\,.$$ 
    This means that, for every $v\in \R^m$ and $w\in\R^n$, 
    $$Dp(x)\left[ 
    \begin{array}{c|c} 
    \Id_m  - \sff_{z-p(z)}& 0 \\ 
    \hline 
    * & \Id_n
    \end{array} 
    \right]
    \left[ 
    \begin{array}{c} 
    v \\ 
    \hline 
     w
    \end{array} 
    \right]=
    \left[ 
    \begin{array}{c} 
    v \\ 
    \hline 
     0
    \end{array} 
    \right]\,,$$ 
and thus the claim is proved.
\end{proof}

\begin{proof}[Proof of Lemma \ref{hessdist}]
    Since $d=|z-p(z)|$ we use the formula for $Dp(z)$ to compute $D\tfrac{1}{2}d^2=z-p(z).$
    We differentiate once again and obtain
    \begin{equation}\notag
        D^2\tfrac{1}{2}d^2=D(z-p(z))= \Id_{m+n}- Dp(z)=\left[ 
    \begin{array}{c|c} 
    \Id_m-\big(\Id_m - \sff_{z-p(z)}\big)^{-1} & 0 \\ 
    \hline 
    0 & \Id_n
    \end{array} 
    \right]\,,
    \end{equation}
    where we used Lemma \ref{lem:matrixp}.
    Now, 
    \begin{equation}\notag
        \Id_m - \sff_{z-p(z)}=\Id_m - |z-p(z)|\mathrm{diag}(\kappa_j)_{j=1,\dots,m}=\mathrm{diag}(1-d(z)\kappa_j)_{j=1,\dots,m}\,,
    \end{equation}
    so that we have proved the claim.
\end{proof}
\subsection{Proofs of the results in Section \ref{jacop}}
\begin{proof}[Proof of Lemma \ref{lem:W2pbounds}]
    For $\delta_0$ sufficiently small, the bilinear form $-\langle u,\LL_M u\rangle$ is elliptic in $H^1_0(B^M_1,\nu M).$ Existence and uniqueness of energy solutions to $\LL_M =f,$ for $f$ merely in $L^2(\nu M)$ follows by Riesz representation.

    Since the boundary $\de B^M_1$ is smooth, $W^{2,p}$ estimates hold up to the boundary:
    \begin{equation*}
            \|u\|_{C^1(B^M_1)} \le C \|u\|_{W^{2,m+1}(B^M_1)}\le C \|f\|_{L^\infty(B^M_1)}\,.\qedhere
    \end{equation*}
\end{proof}

\begin{proof}[Proof of Lemma \ref{lem:harmrepl}]
    The first inequality in  \eqref{eq:harmrep} is the interior Schauder regularity for strongly elliptic systems. For what concerns the second inequality, we claim that
    \begin{equation}\label{eq:subsol}
    \lap_M |w|^2 + C\delta_0^2 |w|^2 \ge0 \qquad\text{weakly in }B_1^M\,. 
    \end{equation}
    Indeed, we take as test function $\zeta w$ where $\zeta\in C^1_c(B_1^M)$ and $\zeta \ge 0$. On the one hand,
    \begin{align*}
        \int_M D w : D( \zeta w ) =2\int_M \sff_w:\sff_{\zeta w}\le C\delta^2 \int_M |w|^2 \zeta\,.
    \end{align*}
    On the other hand,
    \begin{align*}
        \int_M D w :D( \zeta w ) &\ge \sum_{i=1}^m \int_M (D_{e_i} w)\cdot (w D_{e_i}\zeta + \zeta D_{e_i}w)   \\
        &=\sum_{i=1}^m \int_M (D_{e_i}\zeta) w\cdot D_{e_i} w +\sum_{i=1}^m\int_M \zeta |D_{e_i} w |^2\ge \frac12 \int_M D\zeta \cdot D|w|^2\,,
    \end{align*}
    thus \eqref{eq:subsol} is satisfied weakly. Let $\varphi_1>0$ be the first eigenfunction of the Laplace-Beltrami operator
    \[
    -\lap_M \varphi_1 =\lambda_1\varphi_1\quad\text{and}\quad \varphi_1=0\text{ on }\de B_{7/6}^M\,.
    \]
    Now, as soon as $C\delta_0^2\le \lambda_1$, we claim that the function $|w|^2/\varphi_1$ cannot have interior maximum points in $B_1^M$. This proves that
    \begin{equation*}
        \sup_{B_1^M}|w|^2\le \frac{\sup_{\de B_1^M}\varphi_1}{\inf_{B_1^M} \varphi_1} \sup_{\de B_1^M}|w|^2\le C\sup_{\de B_1^M}|w|^2\,.
    \end{equation*}
    Let us check the claim. Set $\psi\defeq |w|^2$ and compute
     \begin{align*}
         \lap_M(\psi/\varphi_1)&= \frac{\varphi_1\lap_M \psi -\psi\lap_M \varphi_1}{\varphi_1^2}+2\frac{\psi| D \varphi_1|^2 - \varphi_1 D\varphi_1\cdot D \psi}{\varphi_1^3}\,.
     \end{align*}
     Now, if $x_0\in M\cap B_{1}^M$ is a putative interior maximum point for $\psi/\varphi_1$, we have
     \begin{equation*}
         \psi(x_0) D \varphi_1(x_0) = \varphi_1(x_0)  D \psi(x_0)\,,
     \end{equation*}
     so substituting
     \begin{align*}
         \lap_M(\psi/\varphi_1)(x_0)&\ge \frac{-C\delta_0^2 \psi\varphi_1 + \lambda_1 \psi\varphi_1}{\varphi_1^2}+\underbrace{2\frac{\psi| D \varphi_1|^2 - \varphi_1 D\varphi_1\cdot D \psi}{\varphi_1^3}}_{\text{ vanishes at $x_0$}}\ge (\lambda_1-C\delta_0^2)\frac{\psi(x_0)}{\varphi_1(x_0)}>0\,,
     \end{align*}
     a contradiction unless $\psi\equiv 0$. Thus the second part of \eqref{eq:harmrep} is proved.
\end{proof}
\subsection{Proofs of results in Section \ref{vfdscascdsa}}
\begin{proof}[Proof of Lemma \ref{lem:geometriclin}]
We suppress the subscript $f$ from $\Gamma_f$.
Let $e_1,\dots,e_m$ be an orthonormal frame for $M$. Define $F:M\rightarrow\R^{m+n}$ as $F(x)\defeq x+f(x)$, then set, for a.e.\ $z\in \Gamma$, $$w_i(z)\defeq (D_{e_i}F)(p(z))=(e_i+D_{e_i}f)(p(z))\,. $$ Notice that  $w_1,\dots,w_m$ form a basis of $T_{z}\Gamma$ for a.e.\ $z\in\Gamma$, provided that $\Lip (f)$ is small enough. We are going to use that 
\begin{equation}\label{vrfedsvasddcs}
    D_{w_i}w_j(F(x))=D_{e_i}(e_j+D_{e_j}f)(x)\,.
\end{equation}
Define also the matrix field $g_{i,j}\defeq w_i\cdot w_j$, and let $g^{i,j}$ be its inverse (which exists, if $\mathrm{LIP}(f)$ is small enough).

The following computations are carried for $\mathcal{H}^m$-a.e.\ $x\in M$.
First, for $i,j=1,\dots,m$,
\begin{equation}\label{vfedascxasc}
\begin{split}
        g_{i,j}(F(\,\cdot\,))&=\big(e_i\cdot e_j+D_{e_i}f\cdot D_{e_j}f+e_i\cdot D_{e_j}f+e_j\cdot D_{e_i}f\big)\\
        &=\big(\delta_{i,j}+D_{e_i}f\cdot D_{e_j}f-2\sff_{f}(e_i,e_j)\big)\,,
\end{split}
\end{equation}
where we used also the symmetry of the second fundamental form.
Now, 
\begin{equation}\notag
    \Div_{\Gamma} (\varphi\circ p)( F(\,\cdot\,))=\sum_{i,j=1}^m \big(g^{i,j}w_i\cdot D_{w_j} (\varphi\circ p)\big)(F(\,\cdot\,))=\sum_{i,j=1}^m g^{i,j}(F(\,\cdot\,))\big((e_i+D_{e_i }f)\cdot D_{e_j} \varphi \big)\,.
\end{equation}
We remark that by \eqref{vfedascxasc}, for $i,j=1,\dots,m$,
\begin{equation}\notag
    g^{i,j}=\delta_{i,j}-D_{e_i}f\cdot D_{e_j }f+2\sff_f(e_i,e_j)+O(|D f|^4+ \delta^2 |f|^2)\,,
\end{equation}
in particular,
\begin{equation}\notag
    g^{i,j}=\delta_{i,j}+2\sff_f(e_i,e_j)+O(|D f|^2+ \delta^2 |f|^2)\,.
\end{equation}
Hence, 
\begin{equation}\notag
         \Div_{\Gamma} (\varphi\circ p)(F(\,\cdot\,))=\sum_{i=1}^m (e_i+D_{e_i}f)\cdot D_{e_i}\varphi+\sum_{i,j=1}^m2\sff_f(e_i,e_j)(e_i+D_{e_i }f)\cdot D_{e_j}\varphi+ O\big((|Df|^2+\delta^2 f^2)|D\varphi|\big)\,.
\end{equation}
Now we compute 
\begin{equation}\notag
\begin{split}
     &\sum_{i=1}^m (e_i+D_{e_i}f)\cdot D_{e_i}\varphi+\sum_{i,j=1}^m2\sff_f(e_i,e_j)(e_i+D_{e_i }f)\cdot D_{e_j}\varphi\\
     &\qquad\qquad=-H\cdot\varphi+\sum_{i=1}^m D_{e_i}f\cdot D_{e_j}\varphi-2\sff_f:\sff_{\varphi}+2\sum_{i,j=1}^m\sff_f (e_i,e_j)D_{e_i}f\,\cdot D_{e_j}\varphi\\
      &\qquad\qquad=\sum_{i=1}^m D_{e_i}f\cdot D_{e_j}\varphi-2\sff_f:\sff_{\varphi}+O(\delta| D\varphi||f||D f|)\,.
         \end{split}
\end{equation}
This concludes the proof of the first part of the lemma. We have now to estimate $|Df|$.
Recalling that $P_{T\Gamma}=\sum_{i,j=1}^m g^{i,j}w_i\otimes w_j$ and $P_{TM}=\sum_{k=1}^m e_k\otimes e_k$,
\begin{equation}\notag
    \begin{split}
        \tfrac12|T_{F(x)}\Gamma-T_xM|^2&=\tfrac12|P_{T_{F(x)}\Gamma}|^2+\tfrac12|P_{T_{x}M}|^2-P_{T_{F(x)}\Gamma}:P_{T_{x}M}=m-\sum_{i,j,k=1}^m g^{i,j}w_i\cdot e_k w_j\cdot e_k\\
        &=m-\sum_{i,j,k=1}^m g^{i,j}(\delta_{i,k}+e_k\cdot D_{e_i}f)(\delta_{j,k}+e_k\cdot D_{e_j}f)\\
        &=m-\sum_{i,j=1}^m g^{i,j}(\delta_{i,j}+e_j \cdot D_{e_i} f+e_i\cdot D_{e_j} f+ P_{TM} D_{e_i}f \cdot P_{TM} D_{e_j}f )\\
        &=m-\sum_{i,j}^m \delta^{i,j}(\delta_{i,j}-2\sff_f (e_i,e_j)+ P_{TM} D_{e_i}f \cdot P_{TM} D_{e_j}f )\\
        &\qquad\qquad +\sum_{i,j}^m D_{e_i}f\cdot D_{e_j}f(\delta_{i,j}-2\sff_f (e_i,e_j)+ P_{TM} D_{e_i}f \cdot P_{TM} D_{e_j}f )\\
        &\qquad\qquad -\sum_{i,j}^m 2\sff_f(e_i,e_j)(\delta_{i,j}-2\sff_f (e_i,e_j)+ P_{TM} D_{e_i}f \cdot P_{TM} D_{e_j}f )\\
        &\qquad\qquad +O(|Df|^4+\delta^2|f|^2)\\
        &=-\sum_{i=1}^m| P_{TM}D_{e_i}f|^2+|D f|^2+O(\delta f|Df|^2+|Df|^4+\delta^2 f^2)\, ,
    \end{split}
\end{equation}
which proves that $|Df|^2\le \sum_{i=1}^m| P_{TM}D_{e_i}f|^2+ C(\delta_0^2 f^2+|T_{F(x)}\Gamma-T_xM|^2)$. Now,
\begin{equation}\notag
    \sum_{i=1}^m|P_{T M}D_{e_i}f|^2=\sum_{i,j=1}^m (D_{e_i} f\cdot e_j)^2=\sum_{i,j=1}^m (D_{e_i}  e_j\cdot f)^2=|\sff_f|^2=O(\delta^2 |f|^2)\,,
\end{equation}
which yields the conclusion.
\end{proof}
\begin{proof}[Proof of Lemma \ref{vefdscacdsac}]
    We use the same notation as in the proof of Lemma \ref{lem:geometriclin}. In particular, we use the  map $F(x)=x+f(x)$, the orthonormal frame $e_1,\dots,e_m$ for $M$ and the basis $w_1,\dots,w_m$ for $\Gamma$. We define as in Lemma \ref{lem:geometriclin} the matrices $g_{i,j}$  and $g^{i,j}$, and we recall \eqref{vfedascxasc}.  We are also going to use \eqref{vrfedsvasddcs}. In the following computations, we will use Einstein's convention and omit to write the summation over repeated indices. We also introduce  the error terms $E^{ij}\defeq g^{i,j}-\delta_{i,j}-2\sff_f(e_i,e_j)$.
    Recall that we have to compute the $C^{1/2}$ norm of 
    \begin{equation}\label{vefrdcvasa}
    \begin{split}
    H_{\Gamma}&=P_{T^\perp\Gamma} g^{i,j}D_{w_i} w_j= g^{i,j}D_{w_i} w_j-P_{T\Gamma } g^{i,j}D_{w_i} w_j\\
    &=g^{i,j} D_{w_i}w_j- g^{h,k} g^{i,j} w_h w_k\cdot D_{w_i}w_j\\
    &= (\delta_{i,j}+2\sff_f^{i,j}+ E^{i,j}) D_{e_i}(e_j+D_{e_j}f)\\&\qquad\qquad- (\delta_{h,k}+2\sff_f^{h,k}+ E^{h,k})(\delta_{i,j}+2\sff_f^{i,j}+ E^{i,j})(e_h+D_{e_h}f) (e_k+D_{e_k}f) \cdot D_{e_i}(e_j+D_{e_j}f)
    \end{split}
    \end{equation}
    where we used the explicit expression $P_{T \Gamma}=g^{h,k}w_h\otimes w_k$. To ease the notation, we define the vector field $v\defeq D_{e_i}e_i$, notice that $v\in TM,$ because $H_M=0$. 
    Notice that $E^{i,j}$ is made of terms of order at least  $2$ in $f$.
    We write the term of order $0$ in $f$ of \eqref{vefrdcvasa} as 
    \begin{equation}\notag
        D_{e_i}e_i-e_k e_k\cdot D_{e_i}e_i= v-P_{TM}v=P_{T^\perp M}v=H_M=0\,.
    \end{equation}
    We write the term of order $1$ in $f$ of \eqref{vefrdcvasa} as 
    \begin{equation}\notag
    \begin{split}
        &2\sff_f^{i,j}D_{e_i}e_j+ D_{e_i}D_{e_j}f\\
        &\qquad\qquad-2\sff_{f}^{h,k}e_he_k\cdot D_{e_i}e_i-2\sff_f^{i,j}e_k e_k\cdot D_{e_i}e_j-D_{e_k}f e_k\cdot D_{e_i}e_i-e_k D_{e_k}f\cdot D_{e_i}e_i-e_k e_k\cdot D_{e_i}D_{e_i}f\\
        &=2\sff_f^{i,j}D_{e_i}e_j-2\sff_f^{i,j}e_k e_k\cdot D_{e_i}e_j-2\sff_f^{h,k}e_he_k\cdot v+ D_{e_i}D_{e_j}f-D_{v}f-e_k D_{e_k}f\cdot v-P_{TM}D_{e_i}D_{e_i} f\\
        &=2\sff_f^{i,j}(P_{T^\perp M} D_{e_i}e_j)+2P_{TM}D_v  f+P_{T^\perp M}D_{e_i}D_{e_i}f-D_v f-e_k D_{e_k}f\cdot v\\
        &=2\sff_f:\sff_{\cdot}+2P_{TM}D_v f+P_{T^\perp M}\Delta f+P_{T^\perp M}D_v f-D_v f-e_k D_{e_k}f\cdot v\\
        &= \LL_M f+2P_{TM}D_v f+P_{T^\perp M}D_v f-D_v f-e_k D_{e_k}f\cdot v\\
        &=\LL_M f\,,
    \end{split}
    \end{equation}
    where we used the symmetry of the second fundamental form twice, precisely in $$\sff_f^{h,k}e_h e_k\cdot v=\sff_f^{k,h}e_h e_k\cdot v=D_{e_k}e_h\cdot f e_h e_k\cdot v=-D_{e_k}f\cdot e_h e_h e_k\cdot v=-P_{TM}D_v f$$ and  $$e_k D_{e_k}f\cdot v=-e_k D_{e_k} v \cdot f=-e_k\sff_f(e_k,v)=-e_k\sff_f(v,e_k)=-e_k D_v e_k\cdot f=e_ke_k\cdot D_v f=P_{TM}D_v f\,.$$
    %
    Hence, $H_\Gamma(F(x))= \LL_M f(x)+R(x)$, where the error therm $R$ satisfies
    $\|R\|_{C^{1/2}(B_1^M)}\le C\|f\|^2_{C^{2.1/2}(B_1^M)}$.
\end{proof}
\begin{proof}[Proof of Lemma \ref{lem:solveMSS}]
Fix an orthonormal frame $\{e_1,\ldots,e_{m}\}$ for $M$. 
If $v\colon M\to \R^{m+n}$ is a $C^2$ normal section of $M$, we denote by $\Gamma_v$ its graph, that is the manifold
\[
\Gamma_v\defeq\{ x+v(x) :x\in B^M_1\}\,.
\]
As in the previous proofs, the mean curvature vector of $\Gamma_v$ is
\begin{equation}\label{eq:Hv}
    H_{v}=g^{i,j} D_{w_i}w_j- g^{h,k} g^{i,j} w_h w_k\cdot D_{w_i}w_j\,.
\end{equation}
We remark that $H_v(x)\in \R^{m+n}$ lies in fact in the subspace $T_{x+f(x)}\Gamma_v^\perp.$
Here, $g^{ij}(x)$ are the entries of the inverse of the matrix $(w_i\cdot w_j)_{i,j}$ and $w_i(x+v(x))\defeq e_i(x)+D_{e_i} f(x)$.

Consider the map
\begin{align*}
    \Phi : X\to Y\times Z,\qquad 
    \Phi\colon v \mapsto ( H_v, v|_{\de B^M_1})\,,
\end{align*}
where $X,$ $Y$ and $Z$ are the following Banach spaces
\[
X\defeq C^{2,1/2}(B^M_1,\nu M ),\quad Y\defeq C^{1/2}( B^M_1,\R^{m+n}), \quad Z\defeq C^{2,1/2}(\de B^M_1,\nu M)\,.
\]
We claim that 
\begin{enumerate}
    \item $\Phi$ is of class $C^1(X,Y\times Z)$ around the point $v=0$;
    \item the derivative at $v=0$ is the Jacobi operator of $M$, that is $D_u\Phi(0)=(\LL_M u, u|_{\de B^M_1}),$ and it is invertible.
\end{enumerate}
Given the claims, the lemma follows by the inverse function theorem (since $\|h\|_X\le \eps_0$). Set indeed
\[
h'\defeq\Phi^{-1}(0,h|_{\de B^M_1})
\]
and combine Lemma \ref{vefdscacdsac} with the Lipschitz character of $\Phi^{-1}$:
\[
\|h-h'\|_X\le C\|H_h\|_{Y} \le C\|h\|_X^2\le C\eps^2\,.
\]

Now, item (1) follows inspecting the formula giving $H_v$, namely \eqref{eq:Hv}, which can be generally written as
\[
H_v(x)=F(x,v(x),Dv(x),D^2v(x))\,,
\]
with $F$ smooth and all arguments ranging in a compact domain. If $\Phi=(\Phi_Y,\Phi_Z)$, then
\[
D_\varphi \Phi_Y(v)=\frac{d}{dt}\Big|_{t=0} H_{v+t\varphi}(x) = \sum_{k=0}^2 F_{k+2}(x,v(x),Dv(x),D^2v(x))D^k\varphi(x)\,,
\]
where $F_j$ is a short-hand for derivatives of $F$ in the $j$-th entry.
Now using that $C^{1/2}$ is an algebra and that $F$ is smooth one concludes that if $v_k\to v$ in $X=C^{2,1/2}$ and $\|u_k\|_X\le 1,$ then
\[
\|D_{u_k}\Phi_Y(v_k)-D_{u_k}\Phi_Y(v)\|_{C^{1/2}} \to 0\,.
\]

Let us prove item (2). Lemma \ref{vefdscacdsac} shows that $D_u\Phi(0)=\LL_M u.$ 
The maximum principle for $\LL_M$ (see Lemma \ref{lem:harmrepl}) shows that $D\Phi(0)$ is injective. Schauder regularity and Lemma \ref{lem:W2pbounds} show that $D\Phi(0)$ is surjective.
\end{proof}

\end{document}